\DeclarePairedDelimiter{\abs}{\lvert}{\rvert}
\DeclarePairedDelimiter\norm{\lVert}{\rVert}
\DeclarePairedDelimiter{\fences}{(}{)}
\newcommand{\V}{V_n\fences}
\renewcommand{\Pr}{\operatorname{\mathbb{P}}\fences}
\DeclareMathOperator{\Ex}{\mathbb{E}}
\newcommand{\Exval}{\Ex\fences}
\newcommand{\ls}{\leqslant}
\newcommand{\gr}{\geqslant}
\renewcommand{\leq}{\leqslant}
\renewcommand{\geq}{\geqslant}
\renewcommand{\le}{\leqslant}
\renewcommand{\ge}{\geqslant}
\newcommand*{\numberset}{\mathbb}
\newcommand*{\N}{\numberset{N}}
\newcommand*{\R}{\numberset{R}}
\newcommand*\de{\mathop{}\!\mathrm{d}}
\newcommand*{\rec}[1]{\frac{1}{#1}}
\newcommand*{\indic}[1]{\mathbbm{1}_{#1}}
\newcommand{\cdf}{\mathrm{F}}
\newcommand{\B}{B}
\renewcommand{\epsilon}{\varepsilon}
\renewcommand{\theta}{\vartheta}
\renewcommand{\phi}{\varphi}
\newcommand{\vball}{\kappa}
\theoremstyle{plain}
\newtheorem{theorem}{Theorem}[section]
\newtheorem{corollary}[theorem]{Corollary}
\newtheorem{lemma}[theorem]{Lemma}
\newtheorem{proposition}[theorem]{Proposition}
\theoremstyle{definition}
\newtheorem{remark}[theorem]{Remark}
\title{\textbf{Threshold phenomena for high-dimensional random polytopes}\blfootnote{\textit{2010 Mathematics Subject Classification:} Primary 52A23; Secondary 52B11, 52A22, 60D05.}\blfootnote{\textit{Keywords and phrases:} Beta distribution, beta-prime distribution, convex bodies, isotropic log-concave measures, phase transition, random polytopes, volume threshold.}}
\author{Gilles Bonnet\thanks{Research supported by the Deutsche Forschungsgemeinschaft (DFG) via RTG 2131 \emph{High-Dimensional Phenomena in Probability -- Fluctuations and Discontinuity.}}, Giorgos Chasapis\thanks{Research supported by the National Scholarship Foundation (IKY), sponsored by the act ``Scholarship grants for second-degree graduate studies'', from resources of the operational program ``Manpower Development, Education and Life-long Learning'', 2014-2020, co-funded by the European Social Fund (ESF) and the Greek state.}, Julian Grote\footnotemark[1]\,,\\Daniel Temesvari\footnotemark[1] \,and Nicola Turchi\footnotemark[1]}
\date{}
\def\blfootnote{\xdef\@thefnmark{}\@footnotetext}
\begin{document}

\maketitle

\begin{abstract}
Let $X_1,\ldots,X_N$, $N>n$, be independent random points in $\R^n$, distributed according to the so-called beta or beta-prime distribution, respectively. We establish threshold phenomena for the volume, intrinsic volumes, or more general measures of the convex hulls of these random point sets, as the space dimension $n$ tends to infinity. The dual setting of polytopes generated by random halfspaces is also investigated.
\end{abstract}

\bigskip
\tableofcontents
\bigskip

\pagebreak

\section{Introduction and main results}

In the last decades, random polytopes have become one of the outstanding models of study in stochastic geometry, connecting problems and methods from classical convexity and probability theory, and bringing forth numerous applications in other fields of mathematics like optimization, extreme value theory, random matrices and algorithmic geometry, to name just a few. Among the vast literature on the subject, we direct the reader to the recent survey papers \cite{Hug,Reitzner} and the references therein for a detailed account on the matter.

A particular issue that has been studied in many aspects is the complexity of volume computation and approximation of high-dimensional convex bodies by random polytopes. In the general setting, one may consider the convex hull $\mathrm{conv}\{X_1,\ldots,X_N\}$ of a finite number of points chosen randomly from the interior of a convex body $K$ in $\R ^n$, and investigate conditions under which this convex hull ``well-approximates'' the original body, for example in terms of the volume or other geometric parameters. In a seminal work, Dyer, F\"{u}redi and McDiarmid \cite{DFMcD} proved that the expected volume of the convex hull $C_N=\mathrm{conv}\{X_1,\ldots,X_N\}$ of $N>n$ points chosen uniformly and independently from the vertices of the $n$-dimensional cube $[-1,1]^n$, exhibits a phase transition when $N$ is taken to be exponential in the dimension $n$, namely, that for every $\varepsilon>0$,
\[
\lim_{n\to\infty} \frac{\Ex  V_n(C_N)}{V_n([-1,1]^n)} = \begin{cases} 0 &\text{ if } N\ls (2e^{-1/2}-\varepsilon)^n \\
1 &\text{ if } N\gr (2e^{-1/2}+\varepsilon)^n,
\end{cases}
\]
where $V_n$ denotes the $n$-dimensional volume of a set. The method introduced in \cite{DFMcD} influenced a number of later works, like for instance the approach that B\'{a}r\'{a}ny and P\'{o}r \cite{BP} used to prove the existence of $\pm 1$ polytopes with a super-exponential number of facets. Subsequently, new volume threshold results were established by Gatzouras and Giannopoulos \cite{GG} for random polytopes generated by a wide class of probability measures $\mu$ in $\R ^n$, as well as Pivovarov \cite{Piv}, who treated the case of independent points with respect to the Gaussian measure in $\R ^n$ and the uniform measure on the Euclidean sphere. Moreover, Pivovarov considered the dual setting of polytopes generated as sections of random halfspaces with respect to the same probability measures. We stress that the authors in both \cite{GG} and \cite{Piv} exploit the method of \cite{DFMcD}, which due to its geometric viewpoint seems to be applicable for a wide variety of probability distributions. 

Let $N$ and $n$ be natural numbers, \(N>n\), and $X_1, X_2,\ldots,X_N$ be independent and identically distributed random points in $\R^n$, equipped with the Euclidean norm $\norm{\cdot}$ and its corresponding unit ball $\B_2^n$. In this text, we draw our attention to the following two probability distribution models.
\begin{enumerate}[(a)]
\item The \textit{Beta model}, with parameter $\beta > -1$: $X_1$ has density proportional to
\begin{equation*}
(1-\norm{x}^2)^\beta,\quad x\in\B_2^n.
\end{equation*}
We are interested in the random polytope given by
\[
P_{N,n}^\beta:= \mathrm{conv}\{X_1,\ldots,X_N\}.
\]
\item The \textit{Beta-prime model}, with parameters $\beta > n/2$ and $\sigma > 0$: $X_1$ has density proportional to
\begin{equation*}
\biggl(1+ \frac{ \norm{x}^2 }{\sigma^2} \biggr)^{-\beta},\quad x\in\R^n.
\end{equation*}
As before, we consider the random polytope
\[
	\tilde{P}_{N,n}^{\beta,\sigma}:= \mathrm{conv}\{X_1,\ldots,X_N\}.
	\] 
\end{enumerate}

Lately, the high-dimensional geometry of sets arising from these models of randomness have been studied extensively; for instance, in terms of properties of their volume \cite{GKT}, facet numbers \cite{BGTTTW} or intrinsic volumes \cite{KTT}. Asymptotic estimates on the expected volume of the polytope $P_{N,n}^\beta$, as $N\to \infty$, were derived by Affentranger \cite{Af} for any fixed dimension $n$ and parameter $\beta$. Note also, that the gnomonic projection of a uniformly distributed point on the half-sphere is beta prime distributed, which is exploited in \cite{BGTTTW} and \cite{KMTT}.

In this article, we prove threshold results for the volumes and intrinsic volumes of $P_{N,n}^\beta$ and the content of $	\tilde{P}_{N,n}^{\beta,\sigma}$ with respect to log-concave isotropic measures, as the space dimension tends to infinity. In particular, it turns out that the polytope $P_{N,n}^\beta$ tends to capture the whole volume of $B_2^n$ only if the number of points $N$ is superexponential in $n$.

\begin{theorem}[Threshold for beta polytopes] \label{Theorem1}
	\label{thm:beta}
	Fix $\epsilon\in(0,1)$ and let $-1 < \beta = \beta(n)$ and $N=N(n)$ be sequences. Then, 
	\[
	\lim_{n\to \infty}\frac{\Ex \V{P_{N,n}^\beta}}{\V{\B_2^n}}  =  
	\begin{cases} 
	0 &\text{ if } N\ls \exp\left((1-\epsilon)(\beta+\frac{n+1}{2})\log n\right) \\ 
	1 &\text{ if } N\gr \exp\left((1+\epsilon)(\beta+\frac{n+1}{2})\log n\right).
	\end{cases}
	\]
\end{theorem}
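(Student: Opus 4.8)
The plan is to follow the geometric method of Dyer--Füredi--McDiarmid as adapted by Gatzouras--Giannopoulos, which reduces the volume threshold to a single-point concentration estimate for the beta density. Concretely, write $V_n := V_n(P_{N,n}^\beta)/V_n(B_2^n)$. For the lower (``$\to 0$'') regime the plan is to bound $\mathbb{E}\,V_n$ from above via the union bound: $P_{N,n}^\beta \subseteq r_n B_2^n$ fails only if some $X_i$ has norm exceeding $r_n$, so for any radius $r_n \in (0,1)$,
\[
\mathbb{E}\, V_n \;\ls\; r_n^{\,n} \;+\; N\cdot \mathbb{P}\bigl(\norm{X_1} > r_n\bigr).
\]
The tail $\mathbb{P}(\norm{X_1} > r)$ for the beta distribution is computed explicitly: $\norm{X_1}^2$ has a $\mathrm{Beta}(n/2,\beta+1)$ law, so $\mathbb{P}(\norm{X_1}>r) = \frac{\Gamma(\beta+\frac n2+1)}{\Gamma(n/2)\Gamma(\beta+1)}\int_{r^2}^1 t^{n/2-1}(1-t)^\beta\,dt$, and one estimates this integral (it behaves essentially like $(1-r^2)^{\beta+1}$ up to polynomial-in-$n$ and binomial-type factors). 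One then chooses $r_n = 1 - c\,\frac{\log n}{n}$ for a suitable constant so that $r_n^n \to 0$ while $N\cdot\mathbb{P}(\norm{X_1}>r_n)\to 0$ exactly when $N \ls \exp((1-\epsilon)(\beta+\frac{n+1}{2})\log n)$; matching the exponent $(\beta+\frac{n+1}{2})\log n$ is where the precise asymptotics of the Gamma-factor $\frac{\Gamma(\beta+n/2+1)}{\Gamma(n/2)\Gamma(\beta+1)}$ and of $(1-r_n^2)^{\beta+1}$ must be tracked carefully.

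For the upper (``$\to 1$'') regime the plan is the standard covering argument: it suffices to show that for a radius $\rho_n$ slightly smaller than $1$, with high probability $\rho_n B_2^n \subseteq P_{N,n}^\beta$, since then $\mathbb{E}\,V_n \gr \rho_n^n\,\mathbb{P}(\rho_n B_2^n\subseteq P_{N,n}^\beta) \to 1$. To get $\rho_n B_2^n \subseteq \mathrm{conv}\{X_1,\dots,X_N\}$ one uses a net: fix a $\delta$-net $\mathcal{N}$ on $S^{n-1}$ of cardinality at most $(3/\delta)^n$, and show that for each direction $\theta\in\mathcal{N}$ some $X_i$ lies in the cap $\{x : \langle x,\theta\rangle > \rho_n + \delta\}$; a union bound over $\mathcal{N}$ together with an elementary caps-to-convex-hull lemma (controlling the loss from passing from a net to all directions, as in \cite{DFMcD,GG}) then yields the inclusion. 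The required cap probability $\mathbb{P}(\langle X_1,\theta\rangle > t)$ is again explicit: the marginal $\langle X_1,\theta\rangle$ has (up to scaling) a one-dimensional beta density with exponent $\beta + \frac{n-1}{2}$, so $\mathbb{P}(\langle X_1,\theta\rangle > t) \asymp (1-t^2)^{\beta+(n+1)/2}$ up to polynomial factors, and one needs $N$ times this cap probability to tend to infinity, which happens precisely when $N \gr \exp((1+\epsilon)(\beta+\frac{n+1}{2})\log n)$.

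The main obstacle, and the place demanding the most care, is the two-sided asymptotic analysis of the relevant Gamma-function ratios and beta-integral tails \emph{uniformly} in the sequence $\beta = \beta(n)$, which is allowed to grow (or stay bounded, or tend to $-1$) arbitrarily with $n$. One must produce matching upper and lower bounds on $\mathbb{P}(\langle X_1,\theta\rangle > t)$ of the form $n^{-O(1)}(1-t^2)^{\beta+(n+1)/2} \ls \mathbb{P}(\langle X_1,\theta\rangle>t)\ls n^{O(1)}(1-t^2)^{\beta+(n+1)/2}$ valid across all these regimes, so that the polynomial-in-$n$ prefactors get absorbed into the $\epsilon$-slack in the exponent; Stirling's formula with explicit error terms, applied with attention to whether $\beta$ is small or large relative to $n$, is the natural tool. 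A secondary technical point is choosing the net parameter $\delta$ and the radii $r_n,\rho_n$ as functions of $n$ (and $\beta$) so that all the competing error terms — $r_n^n$, $(3/\delta)^n$ times a cap probability, and the net-to-all-directions loss — are simultaneously controlled; this is routine once the sharp cap estimate is in hand, and mirrors the bookkeeping in \cite{GG,Piv}.
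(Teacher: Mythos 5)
Your plan for the ``$\to 1$'' direction (covering the inner ball by a net of caps and passing from the net to all directions via the Lipschitz property of the support function) is a viable alternative to the paper's argument, which instead uses the Dyer--F\"uredi--McDiarmid device of a union bound over the $\binom{N}{n}$ candidate facet hyperplanes (Lemma~\ref{lem.prob}); both yield an error of the form $\exp(n\log(\cdot)-N\cdf(R))$ and can be closed with the same cap estimate from Lemma~\ref{lem:B.bounds}. The ``$\to 0$'' direction, however, has a genuine gap that cannot be repaired by tuning $r_n$.

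The inequality $\Ex V_n \ls r_n^n + N\,\Pr{\norm{X_1}>r_n}$ is correct as stated, but for bounded $\beta$ no choice of $r_n$ makes both terms small simultaneously. Already for $\beta=0$ one has $\Pr{\norm{X_1}>r_n}=1-r_n^n$, so the right-hand side equals $r_n^n+N(1-r_n^n)\ge 1$ for every $r_n\in(0,1)$ once $N\ge 1$: the bound is trivial. More generally the \emph{radial} tail $\Pr{\norm{X_1}>r}$ decays like $(1-r^2)^{\beta+1}$ up to prefactors, whereas the quantity governing the threshold is the \emph{cap} tail $\cdf(r)\asymp(1-r^2)^{\beta+\frac{n+1}{2}}$; the exponents differ by $\frac{n-1}{2}$, which is exactly what separates the superexponential threshold $n^{\beta+\frac{n+1}{2}}$ from a merely polynomial one. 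Conceptually, for small $\beta$ the convex hull is \emph{not} contained in a shrunken ball with high probability --- several $X_i$ land near $\partial B_2^n$ --- so arguing by set containment and radial tails cannot work. What is small is the expected \emph{volume} of $P_{N,n}^\beta$ in the annulus $A_n=B_2^n\setminus r_nB_2^n$, and this requires the halfspace bound: if $x\in P_{N,n}^\beta$ then every halfspace containing $x$ must contain some $X_i$, whence $\Pr{x\in P_{N,n}^\beta}\ls N q(x)=N\cdf(\norm{x})$ (Corollary~\ref{corol.q=B}, Lemma~\ref{lemma.ineq}(a)). Integrating this over $A_n$ replaces your exponent $\beta+1$ by the correct $\beta+\frac{n+1}{2}$; your radial union bound must be replaced by this caps-over-the-annulus argument.
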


Although the statement of \Cref{Theorem1} would still hold when replacing the factor \(\beta+(n+1)/2\) by \(\beta+n/2\), we write the former version because the condition on \(\epsilon \) constant can be actually relaxed into \(\epsilon=\epsilon(n)\), where \(\epsilon(n)\to 0^+\) slowly enough. This makes the aforementioned factors not interchangeable. The admissible speed of decay of \(\epsilon(n)\) is addressed in \Cref{remark:general}. 

A special case of \Cref{Theorem1} is of particular interest. By its very definition (see Section \ref{ssec.beta.def} below), the beta distribution for $\beta=0$ coincides with the uniform probability measure on the Euclidean ball $B_2^n$. The following is thus an immediate corollary of Theorem \ref{Theorem1}.

\begin{corollary}
Fix $\epsilon\in(0,1)$ and let $N=N(n)$ be a sequence of positive integers.
Let $X_1,\ldots,X_N$ be independent random points uniformly distributed on $B_2^n$ and set \(B_{N,n}\coloneqq\mathrm{conv}\{X_1,\ldots,X_N\}\).
Then,
	\[
	\lim_{n\to \infty}\frac{\Ex \V{B_{N,n}}}{\V{\B_2^n}} = 
	\begin{cases} 
	0 &\text{ if } N\ls \exp\left((1-\epsilon)(\frac{n+1}{2})\log n\right) \\ 
	1 &\text{ if } N\gr \exp\left((1+\epsilon)(\frac{n+1}{2})\log n\right).
	\end{cases}
	\]
\end{corollary}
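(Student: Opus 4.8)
The plan is to deduce the corollary directly from \Cref{Theorem1} by specializing to the parameter choice $\beta = 0$, for which the beta density proportional to $(1-\norm{x}^2)^0 = 1$ on $\B_2^n$ is precisely the uniform probability measure on the Euclidean ball. First I would note that with $\beta(n) \equiv 0$ the hypotheses of \Cref{Theorem1} are trivially satisfied: the constant sequence $\beta = 0$ obeys $\beta > -1$, and $N = N(n)$ is the prescribed sequence of positive integers. Hence $B_{N,n} = \mathrm{conv}\{X_1,\ldots,X_N\}$ with $X_1,\ldots,X_N$ i.i.d.\ uniform on $\B_2^n$ coincides in distribution with $P_{N,n}^{0}$.

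Next I would substitute $\beta = 0$ into the two threshold expressions appearing in \Cref{Theorem1}. The exponent $(1-\epsilon)(\beta + \tfrac{n+1}{2})\log n$ becomes $(1-\epsilon)\bigl(\tfrac{n+1}{2}\bigr)\log n$, and similarly the upper threshold exponent becomes $(1+\epsilon)\bigl(\tfrac{n+1}{2}\bigr)\log n$. Therefore the dichotomy furnished by \Cref{Theorem1} reads: $\Ex \V{P_{N,n}^{0}}/\V{\B_2^n} \to 0$ when $N \ls \exp\bigl((1-\epsilon)(\tfrac{n+1}{2})\log n\bigr)$ and $\to 1$ when $N \gr \exp\bigl((1+\epsilon)(\tfrac{n+1}{2})\log n\bigr)$. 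Replacing $P_{N,n}^{0}$ by the distributionally identical $B_{N,n}$ gives exactly the claimed statement.

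There is essentially no obstacle here: the only point requiring a word of justification is the identification of the $\beta = 0$ beta distribution with the uniform measure on $\B_2^n$, which is immediate from the definition of the beta density recalled in Section~\ref{ssec.beta.def} (the normalizing constant for $\beta = 0$ reduces to $V_n(\B_2^n)^{-1}$, i.e.\ $\pi^{-n/2}\Gamma(\tfrac{n}{2}+1)$). Once this identification is in place, the corollary is a verbatim restatement of \Cref{Theorem1} with $\beta$ set to zero, valid for each fixed $\epsilon \in (0,1)$.
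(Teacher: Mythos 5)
Your proposal is correct and matches the paper's own argument exactly: the paper likewise observes that the beta density with $\beta=0$ is the uniform distribution on $\B_2^n$ and deduces the corollary immediately by substituting $\beta=0$ into \Cref{Theorem1}. No gaps; nothing further to add.
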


Moreover, since the uniform distribution on the unit sphere $S^{n-1}$ arises as the weak limit of the beta distribution, as $\beta\to -1$ (see for example the proof of Theorem 2.7 in \cite{GKT}), the result of Theorem 2.4 in \cite{Piv} can be recovered by Theorem~\ref{Theorem1}.

\begin{corollary} \label{cor.sphere}
Fix $\epsilon\in(0,1)$ and let $N=N(n)$ be a sequence of positive integers.
Let \(X_1,\ldots,X_N\) be independent random points uniformly distributed on \(S^{n-1}\) and set \(S_{N,n}\coloneqq\mathrm{conv}\{X_1,\ldots,X_N\}\).
Then,
        \[
            \lim_{n\to \infty}\frac{\Ex \V{S_{N,n}}}{\V{\B_2^n}} = 
            \begin{cases} 
            0 &\text{ if } N\ls \exp\bigl( (1-\epsilon) \bigl( \frac{n-1}{2} \bigr)\log n \bigr)\\
            1 &\text{ if } N\gr \exp\bigl( (1+\epsilon) \bigl( \frac{n-1}{2} \bigr)\log n \bigr).
            \end{cases}
        \]
\end{corollary}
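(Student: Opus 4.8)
The plan is to deduce \Cref{cor.sphere} from \Cref{Theorem1} by a weak-convergence argument, using that the uniform distribution on $S^{n-1}$ is the weak limit of the beta distribution as $\beta\to -1^+$ in each fixed dimension $n$. First I would recall (or cite, as in the proof of Theorem 2.7 in \cite{GKT}) that if $Y^{(\beta)}$ has the beta density proportional to $(1-\norm{x}^2)^\beta$ on $B_2^n$, then $Y^{(\beta)}\Rightarrow U$ as $\beta\downarrow -1$, where $U$ is uniform on $S^{n-1}$; consequently, for $N$ independent copies, $\mathrm{conv}\{Y_1^{(\beta)},\ldots,Y_N^{(\beta)}\}\Rightarrow S_{N,n}$, and since the volume functional $V_n(\mathrm{conv}\{\cdot\})$ is continuous and bounded (by $V_n(B_2^n)$) on $(B_2^n)^N$, the bounded convergence / portmanteau theorem gives $\Ex V_n(P_{N,n}^\beta)\to \Ex V_n(S_{N,n})$ as $\beta\downarrow -1$, for each fixed $n$ and $N$.

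Next I would feed this into the threshold of \Cref{Theorem1}. Fix $\epsilon\in(0,1)$ and choose $\epsilon'\in(0,\epsilon)$. Given a sequence $N=N(n)$ with $N\le \exp\bigl((1-\epsilon)(\tfrac{n-1}{2})\log n\bigr)$, I want to pick, for each $n$, a parameter $\beta(n)>-1$ close enough to $-1$ that simultaneously (i) $\beta(n)+\tfrac{n+1}{2} \le (1+\tfrac{\epsilon'}{2})\tfrac{n-1}{2}$, say, so that the hypothesis $N\le \exp\bigl((1-\epsilon'')(\beta(n)+\tfrac{n+1}{2})\log n\bigr)$ of the ``$0$'' case of \Cref{Theorem1} holds for a suitable $\epsilon''>0$; and (ii) $\bigl|\Ex V_n(P_{N,n}^{\beta(n)}) - \Ex V_n(S_{N,n})\bigr| \le 1/n$ times $V_n(B_2^n)$, which is possible by the convergence established above since $n$ and $N(n)$ are fixed once $n$ is. Note (i) is automatic for $\beta(n)$ close to $-1$ because $-1+\tfrac{n+1}{2} = \tfrac{n-1}{2}$ exactly, so the factor $\beta+\tfrac{n+1}{2}$ tends to $\tfrac{n-1}{2}$ as $\beta\downarrow -1$, and we only need it below $(1+\text{small})\tfrac{n-1}{2}$ while $N$ sits below $(1-\epsilon)(\tfrac{n-1}{2})\log n$ in the exponent — there is room to spare. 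Then \Cref{Theorem1} gives $\Ex V_n(P_{N,n}^{\beta(n)})/V_n(B_2^n)\to 0$, and (ii) transfers this to $\Ex V_n(S_{N,n})/V_n(B_2^n)\to 0$. The ``$1$'' case is entirely symmetric: for $N\ge \exp\bigl((1+\epsilon)(\tfrac{n-1}{2})\log n\bigr)$ we pick $\beta(n)\downarrow -1$ so that $\beta(n)+\tfrac{n+1}{2}\ge (1-\text{small})\tfrac{n-1}{2}$, apply the ``$1$'' half of \Cref{Theorem1}, and use the same $1/n$-approximation to conclude $\Ex V_n(S_{N,n})/V_n(B_2^n)\to 1$.

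The step I expect to require the most care is the \emph{uniformity} in the approximation argument: \Cref{Theorem1} as stated applies to a sequence $\beta(n)$, but the weak-convergence estimate (ii) only controls $\Ex V_n(P_{N,n}^{\beta})$ for each \emph{fixed} $n$ as $\beta\to -1$, so one must be slightly careful that the chosen $\beta(n)$ is simultaneously (a) a legitimate sequence to which \Cref{Theorem1} applies — which it trivially is, any sequence works — and (b) close enough to $-1$ at each $n$ that the $1/n$ bound in (ii) holds; since for each fixed $n$ one may choose $\beta(n)$ as close to $-1$ as desired, both can be met at once. One should also double-check that $\beta(n)> -1$ is maintained (strict inequality, to stay in the Beta model's parameter range) and that $N=N(n)$ need not be integer-valued for \Cref{Theorem1} — if integrality matters, replace $N(n)$ by $\lceil N(n)\rceil$ or $\lfloor N(n)\rfloor$ as appropriate, which does not affect the exponential-scale thresholds. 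Finally, I would remark that this recovers Theorem 2.4 of \cite{Piv}, as promised in the text preceding the corollary.
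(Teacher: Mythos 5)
Your argument is essentially the same as the paper's: both deduce the corollary from Theorem~\ref{Theorem1} via the weak convergence $\nu_\beta \Rightarrow \sigma_{S^{n-1}}$ as $\beta\downarrow -1$ for fixed $n$, the boundedness and continuity of the normalized volume functional, and a diagonal choice of $\beta(n)$ close enough to $-1$ so that both the approximation error is small and the threshold factor $\beta(n)+\tfrac{n+1}{2}$ stays near $\tfrac{n-1}{2}$. Your write-up is in fact a little more explicit than the paper's about why these two requirements on $\beta(n)$ can be satisfied simultaneously, which is the only point that needs care.
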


Similar threshold statements hold also for the intrinsic volumes of $P_{N,n}^\beta$.
Intrinsic volumes are geometric functionals which arise from the computation of the volume of the Minkowski sum of two convex sets in \(\R^n\). Namely, for a convex set \(K\) and \(t>0\), the volume of $K+tB_2^n$ can be written as a polynomial of degree \(n\) in $t$:
\[
V_n(K+tB_2^n) = \sum_{j=0}^n t^{n-j}V_{n-j}(B_2^{n-j})V_j(K),
\]
with non-negative coefficients $(V_j(K))_{j=0}^n$.
The term $V_j(K)$, $j\in \{0,\ldots,n\}$, is called the \emph{$j$-th intrinsic volume} of $K$. 
In particular, $V_n(K)$ is the volume of $K$, $V_{n-1}(K)$ is half of its surface area and $V_1(K)$ is a constant multiple of its mean width, respectively. The intrinsic volumes are of great interest in valuation theory, since they form a basis of the vector space of all continuous motion invariant valuations on the set of convex bodies in $\R^n$. This observation is the content of Hadwiger's characterization theorem, see for instance Lemma 4.2.6 in \cite{SchneiderBuch}.

As pointed out in \cite{KTT}, the expected $k$-th intrinsic volume of $P_{N,n}^\beta$ is directly connected to the expected $k$-dimensional volume of $P_{N,k}^\alpha$ for some different parameter $\alpha$ depending on $\beta$, $k$ and $n$. Because of this, \Cref{Theorem1} can be applied to establish threshold results for the intrinsic volumes $V_k(P_{N,n}^\beta)$, $k\in \{1,\ldots,n\}$, for different regimes of $k=k(n)$. 

On the other hand, the case that $k$ is a fixed integer is of independent interest, since it amounts to studying the threshold behaviour of $V_n(P_{N,n}^\beta)$ as $\beta\to\infty$ while the dimension $n$ stays fixed. We prove the following.
\begin{theorem}[Threshold for intrinsic volumes of beta polytopes] \label{thm.V_k.fixed_k}
Fix $\epsilon\in(0,1)$ and $k\in \mathbb{N}$, and let $-1<\beta=\beta(n)$ and $N=N(n)$ be arbitrary sequences of real and natural numbers, respectively. Then
\[
\lim_{n\to \infty} \frac{\Ex V_k(P_{N,n}^\beta)}{V_k(B_2^n)} = 
\begin{cases} 
1 &\text{ if } N\gr \exp\left(\exp\left( (1+\epsilon)\log\left(\beta+\frac{n-k}{2}\right)\right)\right)\\
0 &\text{ if }  N\ls \exp\left(\exp\left( (1-\epsilon)\log\left(\beta+\frac{n-k}{2}\right)\right)\right) .
\end{cases}
%\begin{cases} 
%1 &\quad\text{if } N\gr \exp\left( \left(\beta+\frac{n-k}{2}\right)(1+\varepsilon)^{\beta+\frac{n-k}{2}} \right) \\
%0 &\quad\text{if }  N\ls \exp\left( \left(\beta+\frac{n-k}{2}\right)(1-\varepsilon)^{\beta+\frac{n-k}{2}} \right).
%\end{cases}
\]
\end{theorem}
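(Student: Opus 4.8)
The plan is to reduce \Cref{thm.V_k.fixed_k} to a statement about the \emph{volume} of beta polytopes in the fixed dimension $d=k$, but with a diverging parameter, and then to run a Dyer--Füredi--McDiarmid type argument there. For the reduction I would invoke Kubota's formula (see e.g.\ \cite{SchneiderBuch}): for a convex body $K\subseteq\R^n$ and $1\ls k\ls n$,
\[
V_k(K)=\binom{n}{k}\frac{\kappa_n}{\kappa_k\kappa_{n-k}}\int_{G(n,k)}V_k(K|E)\,\nu_k(\de E),
\]
where $K|E$ is the orthogonal projection onto $E$, $\nu_k$ the invariant probability measure on the Grassmannian, and $\kappa_j:=V_j(B_2^j)$. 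Applied to $K=B_2^n$ this gives $V_k(B_2^n)=\binom{n}{k}\kappa_n/\kappa_{n-k}$; applied to $K=P_{N,n}^\beta$ (and using Fubini, $P_{N,n}^\beta|E=\mathrm{conv}\{X_1|E,\dots,X_N|E\}$, and the fact pointed out in \cite{KTT} that the projection of the beta distribution with parameter $\beta$ in $\R^n$ onto a $k$-dimensional subspace is the beta distribution with parameter $\alpha:=\beta+\tfrac{n-k}{2}$) it gives $\Ex[V_k(P_{N,n}^\beta)]=\binom{n}{k}\tfrac{\kappa_n}{\kappa_k\kappa_{n-k}}\,\Ex[V_k(P_{N,k}^{\alpha})]$. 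Dividing, the quantity to analyse is
\[
\frac{\Ex[V_k(P_{N,n}^\beta)]}{V_k(B_2^n)}=\frac{\Ex[V_k(P_{N,k}^{\alpha})]}{V_k(B_2^k)},\qquad \alpha=\alpha(n)=\beta+\tfrac{n-k}{2}.
\]
Since $\beta>-1$ and $k$ is fixed we have $\alpha\to\infty$, and in terms of $\alpha$ the two thresholds in the statement read simply $N\ls\exp(\alpha^{1-\epsilon})$ and $N\gr\exp(\alpha^{1+\epsilon})$. From now on $d:=k$ is fixed and $X_1,\dots,X_N$ are i.i.d.\ beta-distributed with parameter $\alpha\to\infty$ in $\R^d$.

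\emph{Subcritical regime} ($N\ls\exp(\alpha^{1-\epsilon})$, limit $0$). Because the beta density is supported on $B_2^d$, we have $P_{N,d}^{\alpha}\subseteq RB_2^d$ with $R:=\max_{i\ls N}\norm{X_i}$, hence $V_d(P_{N,d}^{\alpha})\ls R^dV_d(B_2^d)$. Thus it suffices to prove $\Ex[R^d]\to0$, and since $R\ls1$ it is enough to show $\Pr{R>\delta}\to0$ for each fixed $\delta\in(0,1)$. A union bound gives $\Pr{R>\delta}\ls N\,\Pr{\norm{X_1}>\delta}$, and since the radial density of $X_1$ is proportional to $r^{d-1}(1-r^2)^{\alpha}$ on $[0,1]$, a crude estimate of the normalising Beta integral (using $\Gamma(\alpha+1)/\Gamma(\alpha+1+d/2)\gr(2\alpha)^{-d/2}$ for $\alpha$ large) yields $\Pr{\norm{X_1}>\delta}\ls C_d\,\alpha^{d/2}(1-\delta^2)^{\alpha}$. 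Taking logarithms, $\log N+O(\log\alpha)+\alpha\log(1-\delta^2)\ls\alpha^{1-\epsilon}+O(\log\alpha)-c_\delta\alpha\to-\infty$ with $c_\delta:=-\log(1-\delta^2)>0$, so $\Pr{R>\delta}\to0$. Consequently $\Ex[V_d(P_{N,d}^{\alpha})/V_d(B_2^d)]\ls\delta^d+\Pr{R>\delta}\to\delta^d$, and letting $\delta\to0$ gives the limit $0$.

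\emph{Supercritical regime} ($N\gr\exp(\alpha^{1+\epsilon})$, limit $1$). As $d$ is fixed it is enough to show that for each fixed $\eta\in(0,1)$, $\Pr{(1-\eta)B_2^d\subseteq P_{N,d}^{\alpha}}\to1$: then $\Ex[V_d(P_{N,d}^{\alpha})/V_d(B_2^d)]\gr(1-\eta)^d\,\Pr{(1-\eta)B_2^d\subseteq P_{N,d}^{\alpha}}\to(1-\eta)^d$, and letting $\eta\to0$ (with the trivial bound $\ls1$) finishes. Now $(1-\eta)B_2^d\subseteq P_{N,d}^{\alpha}$ is equivalent to $h_{P_{N,d}^{\alpha}}(\theta)=\max_i\langle X_i,\theta\rangle\gr1-\eta$ for every $\theta\in S^{d-1}$. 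Fix an $(\eta/2)$-net $\mathcal N\subseteq S^{d-1}$ with $\abs{\mathcal N}\ls(6/\eta)^d$; since $\norm{X_i}\ls1$, if $\max_i\langle X_i,\theta_0\rangle\gr1-\eta/2$ at every $\theta_0\in\mathcal N$ then $\max_i\langle X_i,\theta\rangle\gr1-\eta$ for all $\theta$. Hence, by a union bound and rotational invariance,
\[
\Pr{\exists\,\theta\in S^{d-1}:\ h_{P_{N,d}^{\alpha}}(\theta)<1-\eta}\ls\abs{\mathcal N}\,(1-p)^N\ls\abs{\mathcal N}\,e^{-Np},\qquad p:=\Pr{\langle X_1,e_1\rangle\gr1-\tfrac{\eta}{2}}.
\]
The coordinate $\langle X_1,e_1\rangle$ has density proportional to $(1-t^2)^{\alpha+(d-1)/2}$ on $[-1,1]$, and a crude lower bound for the corresponding cap integral gives $p\gr c_\eta\,\alpha^{-1}(\eta-\tfrac{\eta^2}{4})^{\alpha+(d-1)/2}=c_\eta\,\alpha^{-1}e^{-c'_\eta\alpha+O(1)}$ with $c'_\eta:=-\log(\eta-\tfrac{\eta^2}{4})>0$. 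Therefore $Np\gr e^{\alpha^{1+\epsilon}}c_\eta\alpha^{-1}e^{-2c'_\eta\alpha}\to\infty$, since $\alpha^{1+\epsilon}$ outgrows any fixed multiple of $\alpha$, and so $\abs{\mathcal N}e^{-Np}\to0$.

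\emph{Main obstacle.} Conceptually the argument is light: the double exponential in the statement makes the threshold \emph{soft} --- the factor $\epsilon$ wipes out every multiplicative constant --- so no sharp-threshold or second-moment machinery is required. The care lies in (i) getting the reduction right, in particular identifying $\alpha=\beta+\tfrac{n-k}{2}$ and checking that $B_2^n$ and $P_{N,n}^\beta$ acquire the \emph{same} Kubota constant so that it cancels; and (ii) ensuring the Gamma/Beta-function estimates in the two regimes only produce polynomial-in-$\alpha$ prefactors, harmless against the exponential gap between $\exp(\alpha^{1-\epsilon})$ and $\exp(\alpha^{1+\epsilon})$. One should also note that $\alpha\to\infty$ uses only $\beta>-1$ and $k$ fixed, so the argument covers all admissible sequences $\beta=\beta(n)$ once $n\gr k$.
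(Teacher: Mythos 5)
Your proposal is correct, and the core dimension-reduction step coincides with the paper's: you re-derive the identity
\[
\frac{\Ex V_k(P_{N,n}^\beta)}{V_k(B_2^n)} = \frac{\Ex V_k\big( P_{N,k}^{\beta+\frac{n-k}{2}} \big)}{V_k(B_2^k)}
\]
via Kubota's formula plus the marginal-projection property of the beta distribution, whereas the paper cites it directly as Proposition~2.3 of~\cite{KTT} (equation~\eqref{eq:beta.intrinsic.identity}); the two routes are equivalent. Where you diverge is in how the fixed-dimension problem is handled. The paper first proves the \emph{sharper} Theorem~\ref{thm.fixed.dim.general}: taking $N=\delta^\beta$ with $\delta>1$, the convex hull $P_{N,k}^\alpha$ sandwiches asymptotically between $RB_2^k$ for any $R<\sqrt{(\delta-1)/\delta}$ and $R'B_2^k$ for any $R'>\sqrt{(\delta-1)/\delta}$; the supercritical inclusion there is obtained through Lemma~\ref{lem.prob}, the Dyer--F\"uredi--McDiarmid-style bound $1-2\binom{N}{k}(1-\cdf(R))^{N-k}$. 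This pins down the exact intermediate limit $((\delta-1)/\delta)^{k/2}$ (Corollary~\ref{corol.intrinsic.fixed.dim}), from which Theorem~\ref{thm.V_k.fixed_k} follows as the $\delta\to 1$ and $\delta\to\infty$ boundary cases. You instead prove only the soft $0/1$ dichotomy directly: a union bound over the $N$ points for the subcritical regime, and for the supercritical regime an $\eta$-net of $S^{k-1}$ with a union bound over $|\mathcal N|\ls (6/\eta)^k$ directions, exploiting that $\|X_i\|\ls 1$ so that the net transfers cleanly. Both are sound; since $k$ is fixed, the net size is a constant, making your argument arguably lighter and self-contained, but it does not recover the intermediate threshold limit $((\delta-1)/\delta)^{k/2}$ or the precise inclusion radius that the paper obtains as a byproduct. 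One minor bookkeeping point: in both of your regimes you quietly absorb polynomial factors in $\alpha$ (the $\alpha^{d/2}$ normalising constant and the $\alpha^{-1}$ cap-volume factor) into the exponential gap between $\exp(\alpha^{1-\epsilon})$ and $\exp(\alpha^{1+\epsilon})$; this is indeed harmless here, but it is worth stating explicitly so a reader does not worry it affects the threshold.
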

The proof of Theorem \ref{thm.V_k.fixed_k}, as well as a general discussion on threshold phenomena for the intrinsic volumes of $P_{N,n}^\beta$ is the content of Section \ref{sssec.intrinsic}.

Next, we treat the case of the beta-prime distribution. Since the underlying measure is not compactly supported, in the spirit of \cite{PivPhd}, we replace the role of the normalized volume on the ball by an arbitrary isotropic log-concave probability measure $\mu$ on $\R ^n$, see Subsection \ref{sec:iso.log.conv.prob.meas} for the definition.
In the sequel we will use the notation $ a \ll b $ if $\frac{a}{b} \to 0 $ as $ n \to \infty $.
\begin{theorem}[Threshold for beta prime polytopes] \label{thm.beta.prime.sigma}
    Fix $\epsilon\in(0,1)$. Let \(\mu = \mu_n \) denote a sequence of isotropic log-concave measures on $\mathbb{R}^n$, let $ \sigma = \sigma(n) > 0 $ and $\beta=\beta(n)$ be sequences of real numbers, and let $N=N(n)$ be a sequence of natural numbers.
    Let \(\beta-\frac{n}{2}\gg\log n\).
    \begin{enumerate}[label={\emph{(\alph*)}}]
        \item If 
        $ \frac{ n }{ \sigma^2 }
        \ll \frac{ 1 }{ \beta -\frac{ n }{ 2 } } $
        and $ N \geq 3 n \log n $,
        then
        \[
            \lim_{ n \to \infty } \Ex \mu(\tilde{P}_{N,n}^{\beta,\sigma}) = 1 .
        \]
        \item \label{forGauss} If 
        $ \frac{ 1 }{ \beta -\frac{ n }{ 2 } } 
        \ll \frac{ n }{ \sigma^2 }
        \ll \frac{ 1 }{ \sqrt{ \beta -\frac{ n }{ 2 } } } $,
        then,
        \[
            \lim_{ n \to \infty } \Ex \mu(\tilde{P}_{N,n}^{\beta,\sigma}) =  
            \begin{cases} 
            0 &\text{ if } N\ls \exp\left( ( 1 - \epsilon ) \frac{ n}{ \sigma^2 }( \beta - \frac{ n }{ 2 } ) \right) \\
            1 &\text{ if } N\gr \exp\left( ( 1 + \epsilon ) \frac{ n}{ \sigma^2 }( \beta - \frac{ n }{ 2 } ) \right).
            \end{cases}
        \]
        \item If %$ \frac{ \beta - \frac{ n }{ 2 } }{ \log \left( n \log ( \frac{ n }{ \sigma^2 } ) \right) } \to \infty $ and 
        $ \frac{ n }{ \sigma^2 } \to \infty $ and $ \sigma > e^{ - \frac{ n }{ 3 } } $ (in particular this holds for \(\sigma\equiv 1\)), then,
        \[
            \lim_{ n \to \infty } \Ex \mu(\tilde{P}_{N,n}^{\beta,\sigma}) = 
            \begin{cases} 
            0 &\text{ if } N\ls \exp\left( ( \beta - \frac{ n }{ 2 } ) \log \left( ( 1 - \epsilon ) \frac{ n }{ \sigma^2 } \right) \right) \\ 
            1 &\text{ if } N\gr \exp\left(  ( \beta - \frac{ n }{ 2 } ) \log \left( ( 1 + \epsilon ) \frac{ n }{ \sigma^2 } \right) \right) .
            \end{cases}
        \]
    \end{enumerate}
\end{theorem}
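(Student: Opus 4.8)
The plan is to run the geometric argument of Dyer–F\"uredi–McDiarmid and Gatzouras–Giannopoulos, in the form used by Pivovarov \cite{DFMcD,GG,Piv,PivPhd}, reducing everything to a one–dimensional estimate for the beta-prime law together with the concentration of an isotropic log-concave measure. The key one–dimensional input: the image of the $(\beta,\sigma)$–beta-prime distribution in $\mathbb{R}^n$ under an orthogonal projection onto a line is the one–dimensional beta-prime law of parameter $\beta-\tfrac{n-1}{2}$ and scale $\sigma$, so for $\|\theta\|=1$ the tail $T(r):=\mathbb{P}(\langle X_1,\theta\rangle\ge r)$ can be evaluated asymptotically at $r\asymp\sqrt n$ by analysing $\int_r^{\infty}(1+t^2/\sigma^2)^{-(\beta-\frac{n-1}{2})}\,dt$. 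One finds
\[
-\log T(\sqrt n)=(1+o(1))\,\tfrac{n}{\sigma^2}\bigl(\beta-\tfrac n2\bigr)\ \text{in case (b)},\qquad -\log T(\sqrt n)=(1+o(1))\bigl(\beta-\tfrac n2\bigr)\log\tfrac{n}{\sigma^2}\ \text{in case (c)},
\]
while $T(\sqrt n)\to\tfrac12$ in case (a); the three regimes are, respectively, the small-argument Gaussian tail, the large-argument power-law tail, and the degenerate range of the one–dimensional density. I would also record that $r\mapsto -\log T(r)$ is increasing and \emph{non-degenerate}, namely $-\log T\bigl((1\pm\epsilon')\sqrt n\bigr)=(1+o(1))(1\pm\epsilon')^2(-\log T(\sqrt n))$ in case (b) and $=(1+o(1))(-\log T(\sqrt n))$ for every fixed $\epsilon'$ in case (c); this is exactly what turns the $(1\pm\epsilon)$ in the exponent of $N$ into a genuine gap for the radius, hence into a sharp threshold. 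Finally I would note the radial tail estimate for $\max_{i\le N}\|X_i\|$ coming from $\|X_1\|^2/\sigma^2\sim\mathrm{Beta}'(n/2,\beta-n/2)$.

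On the side of $\mu$: since it is isotropic and log-concave it satisfies the thin-shell bound $\mu\bigl(\{(1-\epsilon')\sqrt n\le\|y\|\le(1+\epsilon')\sqrt n\}\bigr)\to1$ for every fixed $\epsilon'>0$. Because $X_1,\dots,X_N$ are rotationally symmetric, $\mathbb{P}(y\in\tilde{P}_{N,n}^{\beta,\sigma})$ depends on $y$ only through $\|y\|$ and, conditionally on $0\in\tilde{P}_{N,n}^{\beta,\sigma}$, is non-increasing in $\|y\|$; hence $\mathbb{E}\,\mu(\tilde{P}_{N,n}^{\beta,\sigma})$ is governed, up to $o(1)$, by $\mathbb{P}(re_1\in\tilde{P}_{N,n}^{\beta,\sigma})$ with $r$ in the shell $[(1-\epsilon')\sqrt n,(1+\epsilon')\sqrt n]$. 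This already yields the lower threshold: if $\|y\|=r\ge(1-\epsilon')\sqrt n$ and $y\in\tilde{P}_{N,n}^{\beta,\sigma}$, then some $X_i$ lies in the halfspace $\{x:\langle x,y/\|y\|\rangle\ge r\}$, so $\mathbb{P}(y\in\tilde{P}_{N,n}^{\beta,\sigma})\le N\,T(r)\le N\,T((1-\epsilon')\sqrt n)$; for $N\le\exp((1-\epsilon)(-\log T(\sqrt n)))$ this tends to $0$ once $\epsilon'$ is chosen with $(1-\epsilon')^2>1-\epsilon$ (any fixed $\epsilon'$ works in case (c), where $\log(n/\sigma^2)\to\infty$), and the thin-shell bound finishes the integration. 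Case (a) has no such window, as $-\log T(\sqrt n)=o(1)$ there.

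The upper threshold is the substantive part. It is enough to show $\mathbb{P}\bigl((1+\epsilon')\sqrt n\,e_1\notin\tilde{P}_{N,n}^{\beta,\sigma}\bigr)\to0$, i.e.
\[
\mathbb{P}\Bigl(\exists\,\theta\in S^{n-1}\ :\ (1+\epsilon')\sqrt n\,\langle e_1,\theta\rangle>\max_{i\le N}\langle X_i,\theta\rangle\Bigr)\longrightarrow 0 .
\]
I would discretise $S^{n-1}$ by a net whose mesh is adapted to $\langle e_1,\theta\rangle$: coarse where $\langle e_1,\theta\rangle$ is bounded away from $1$ — there $(1+\epsilon')\sqrt n\,\langle e_1,\theta\rangle$ is so small that for each such direction the bad event has probability at most $(1-c)^N=e^{-\Omega(N)}$, which crushes the cardinality of even a crude net because $N\gg n$ — and fine near $e_1$; intersecting with the high-probability event $\{\max_{i\le N}\|X_i\|\le M\}$ lets one absorb the net-perturbation error $(\text{mesh})\cdot\max_{i\le N}\|X_i\|$ into the argument of $T$. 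The dominant contribution then comes from net directions with $\langle e_1,\theta\rangle$ close to $1$ and is at most $\bigl(1-T((1+\epsilon'')\sqrt n)\bigr)^N\le\exp\bigl(-N\,T((1+\epsilon'')\sqrt n)\bigr)$; by the first paragraph, for $\epsilon''$ small enough $N\,T((1+\epsilon'')\sqrt n)\ge\exp\bigl((1+\epsilon-(1+\epsilon'')^2-o(1))(-\log T(\sqrt n))\bigr)\to\infty$, so this term — and, after summation, the whole probability — vanishes. Part (a) is the same scheme but easier: $T((1+\epsilon'')\sqrt n)$ is bounded below by an absolute constant, only a net of mesh a small negative power of $n$ is needed, its log-cardinality is $O(n\log n)$, and this is already beaten by $N\ge3n\log n$.

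The step I expect to be the main obstacle is the bookkeeping in this last argument: one must choose the mesh near $e_1$ and the cut-off $M$ so that the entropy of the ``dangerous'' part of the net — essentially $n\cdot\log\bigl((\text{circumradius of }\tilde{P}_{N,n}^{\beta,\sigma})/\sqrt n\bigr)$ — stays dominated by the gain $N\,T((1+\epsilon'')\sqrt n)=\exp\bigl(\Omega(-\log T(\sqrt n))\bigr)$. When $-\log T(\sqrt n)$ grows only slowly this balance is tight, which is why the standing hypothesis $\beta-\tfrac n2\gg\log n$ and the split into three regimes (and the transitions between them) are imposed, and why quantitative control of $\max_{i\le N}\|X_i\|$, i.e. of the circumradius of the random polytope, has to be fed in (compare \cite{BGTTTW}). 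Once the one–dimensional asymptotics are in hand, the reduction through $\mu$, the lower bound, and the harmless directions in the covering argument are routine. (For case (c) with $\sigma\equiv1$ one could alternatively transport \Cref{Theorem1} through the gnomonic projection that links the beta and beta-prime models, but the arbitrary reference measure $\mu$ makes the direct route above more natural.)
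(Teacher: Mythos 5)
Your overall scheme --- compute the one-dimensional tail $\tilde{\cdf}$ at the thin-shell radii $(1\pm\epsilon')\sqrt n$, identify its asymptotics in the three regimes, and combine with Klartag's concentration for $\mu$ --- is exactly the paper's, and the asymptotics you state for $-\log T(\sqrt n)$ agree with what Lemma~\ref{lem.tail.Ftilda} delivers. The ``$\to 0$'' direction is likewise the same union bound. Where you diverge is in the ``$\to 1$'' direction: you propose an $\epsilon$-net argument over $S^{n-1}$ (with mesh calibrated to $\langle e_1,\theta\rangle$ and a circumradius cut-off to absorb the net-perturbation error), whereas the paper runs the Dyer--F\"uredi--McDiarmid ball-inclusion argument (Lemma~\ref{lem.prob}): if $R B_2^n\not\subseteq\tilde{P}_{N,n}^{\beta,\sigma}$ then some $n$-element subset $J\subseteq[N]$ spans a hyperplane separating $RB_2^n$ from the remaining $N-n$ points, which, conditionally on $\{X_j\}_{j\in J}$, occurs with probability at most $2(1-\tilde{\cdf}(R))^{N-n}$; a union bound over the $\binom{N}{n}$ choices of $J$ yields the entropy term $\log\binom{N}{n}\le n\log(eN/n)$, which is precisely the $n\log N$ that appears in Lemma~\ref{lemma.beta.prime.productNF}(b). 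That route enumerates candidate facets rather than candidate directions, so there is no mesh to calibrate and no circumradius of $\tilde{P}_{N,n}^{\beta,\sigma}$ to control --- the ``main obstacle'' you flag simply does not arise.

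As written, your net argument also contains a concrete gap. You claim that for directions $\theta$ with $\langle e_1,\theta\rangle$ bounded away from $1$, the bad event has probability at most $(1-c)^N$ for an absolute constant $c>0$; this requires $\tilde{\cdf}\bigl((1+\epsilon')\sqrt n\,\langle e_1,\theta\rangle\bigr)\ge c$. But in regimes (b) and (c) one has $\tilde{\cdf}(r)\to 0$ for \emph{every} $r$ of order $\sqrt n$, in particular for $r=(1-\delta)\sqrt n$: in (b) it is asymptotically $e^{-(1-\delta)^2(n/\sigma^2)(\beta-n/2)(1+o(1))}$, and in (c) it behaves like a constant times $b_n^{-1/2}\bigl(1+(1-\delta)^2 n/\sigma^2\bigr)^{-(b_n-\frac12)}$, both of which vanish. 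So no such constant $c$ exists; those directions are ``harmless'' only because $\tilde{\cdf}$ there is much larger than at the critical radius, not because it is $\Omega(1)$. Patching this forces you to compare a net cardinality of size $\exp(O(n\log(\cdot)))$ against $N\tilde{\cdf}((1+\epsilon'')\sqrt n)$, which is exactly what the facet-counting bound already does, with less machinery; I would recommend switching to Lemma~\ref{lem.prob} and Lemma~\ref{lemma.beta.prime.productNF}.
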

Since the densities of a sequence of beta-prime distributions with parameters $ \sigma^2 = 2 \beta \to \infty $ converge to the density of the standard multivariate Gaussian distribution, we also recover Pivovarov's threshold for Gaussian polytopes.
We state it here in a slightly more explicit form than in Theorem 2.2.1 from \cite{PivPhd}.
For a related result where the log concave isotropic measures are replaced by the volume ratios of the intersection of Gaussian polytopes with balls of arbitrary radii, see Theorem 2.1 from \cite{Piv}.
\begin{corollary}\label{cor.Gaussian}
Fix $\epsilon\in(0,1/2)$. Let \(\mu = \mu_n \) denote a sequence of isotropic log-concave measures on $\mathbb{R}^n$ and let $N=N(n)$ be a sequence of natural numbers.
Let \(X_1,\ldots,X_N\) be independent random points distributed according to the standard Gaussian distribution on \(\R^n\) and let \(G_{N,n}\coloneqq\mathrm{conv}\{X_1,\ldots,X_N\}\). 
Then,
        \[
            \lim_{ n \to \infty } \Ex \mu(G_{N,n}) = 
            \begin{cases} 
            0 &\text{ if } N\ls \exp\bigl(\bigl(\frac{1}{2} - \epsilon\bigr) n\bigr)\\
            1 &\text{ if } N\gr \exp\bigl(\bigl(\frac{1}{2} + \epsilon\bigr) n\bigr).
            \end{cases}
        \]
\end{corollary}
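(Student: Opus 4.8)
The plan is to realise the standard Gaussian measure $\gamma_n$ on $\R^n$ as a limit of beta-prime distributions and to transfer \Cref{thm.beta.prime.sigma}, part~\ref{forGauss}, along this approximation. Write $\tilde\nu_{n,\beta,\sigma}$ for the beta-prime distribution on $\R^n$ with parameters $\beta>n/2$ and $\sigma>0$. With the choice $\sigma^2=2\beta$ its density is $c_{n,\beta}\bigl(1+\norm{x}^2/(2\beta)\bigr)^{-\beta}$, which for each fixed $n$ converges pointwise, as $\beta\to\infty$, to $(2\pi)^{-n/2}e^{-\norm{x}^2/2}$, the density of $\gamma_n$. By Scheff\'e's lemma, $d_{\mathrm{TV}}(\tilde\nu_{n,\beta,\sigma},\gamma_n)\to0$ as $\beta\to\infty$ (still with $\sigma^2=2\beta$, and $n$ fixed). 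Hence for each $n$ we may fix a real number $\beta=\beta(n)\ge n^3$ so large that, with $\sigma=\sigma(n)\coloneqq\sqrt{2\beta(n)}$,
\[
    d_{\mathrm{TV}}\bigl(\tilde\nu_{n,\beta,\sigma},\gamma_n\bigr)<\frac{1}{n\,N(n)}.
\]

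First I would check that the sequences $\beta(n),\sigma(n)$ meet the hypotheses of \Cref{thm.beta.prime.sigma}, part~\ref{forGauss}. Since $\beta\ge n^3$, we have $\beta-\tfrac n2\gg\log n$; and since $\tfrac{n}{\sigma^2}=\tfrac{n}{2\beta}$, we get $\bigl(\tfrac{1}{\beta-n/2}\bigr)\big/\bigl(\tfrac{n}{\sigma^2}\bigr)=\tfrac{2\beta}{n(\beta-n/2)}\to0$ and $\bigl(\tfrac{n}{\sigma^2}\bigr)\big/\bigl(\tfrac{1}{\sqrt{\beta-n/2}}\bigr)\le\tfrac{n}{2\sqrt\beta}\le\tfrac{1}{2\sqrt n}\to0$, that is, $\tfrac{1}{\beta-n/2}\ll\tfrac{n}{\sigma^2}\ll\tfrac{1}{\sqrt{\beta-n/2}}$. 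The threshold exponent appearing in that part equals
\[
    T_n\coloneqq\frac{n}{\sigma^2}\Bigl(\beta-\frac n2\Bigr)=\frac n2\Bigl(1-\frac{n}{2\beta}\Bigr)=\frac n2-\frac{n^2}{4\beta},
\]
so $\tfrac n2-1\le T_n\le\tfrac n2$ for all large $n$.

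Next I would pass from the beta-prime polytope to the Gaussian one. Coupling i.i.d.\ samples $(X_i)_{i=1}^N$ from $\tilde\nu_{n,\beta,\sigma}$ and i.i.d.\ samples $(G_i)_{i=1}^N$ from $\gamma_n$ componentwise via a maximal coupling and taking a union bound, the multisets $\{X_i\}_{i\le N}$ and $\{G_i\}_{i\le N}$ agree off an event of probability at most $N(n)\,d_{\mathrm{TV}}(\tilde\nu_{n,\beta,\sigma},\gamma_n)<\tfrac1n$; since $0\le\mu(\cdot)\le1$, this yields
\[
    \bigl|\Ex\mu(\tilde{P}_{N,n}^{\beta,\sigma})-\Ex\mu(G_{N,n})\bigr|\le\frac1n\longrightarrow0 .
\]
It therefore suffices to compute $\lim_n\Ex\mu(\tilde{P}_{N,n}^{\beta,\sigma})$ from \Cref{thm.beta.prime.sigma}, part~\ref{forGauss}. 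If $N\ls\exp\bigl((\tfrac12-\epsilon)n\bigr)$, then for all large $n$ one has $(\tfrac12-\epsilon)n=(1-2\epsilon)\tfrac n2\le(1-\epsilon)\bigl(\tfrac n2-1\bigr)\le(1-\epsilon)T_n$, so $N\ls\exp\bigl((1-\epsilon)T_n\bigr)$ and that part gives $\Ex\mu(\tilde{P}_{N,n}^{\beta,\sigma})\to0$, hence $\Ex\mu(G_{N,n})\to0$. If $N\gr\exp\bigl((\tfrac12+\epsilon)n\bigr)$, then, since $T_n\le\tfrac n2$ and $2\epsilon\in(0,1)$, one has $(\tfrac12+\epsilon)n=(1+2\epsilon)\tfrac n2\ge(1+2\epsilon)T_n$, so $N\gr\exp\bigl((1+2\epsilon)T_n\bigr)$, and that part, now applied with $2\epsilon$ in place of $\epsilon$, gives $\Ex\mu(\tilde{P}_{N,n}^{\beta,\sigma})\to1$, hence $\Ex\mu(G_{N,n})\to1$.

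The argument is essentially soft, so I do not expect a real obstacle; the one point needing care is that $N=N(n)$ may grow (super)exponentially in $n$, so $d_{\mathrm{TV}}(\tilde\nu_{n,\beta,\sigma},\gamma_n)$ has to be driven below $1/(n\,N(n))$. This costs nothing: the conditions of \Cref{thm.beta.prime.sigma}, part~\ref{forGauss}, restrict only $\beta$ and $n$ and are satisfied by every $\beta(n)\ge n^3$, so $\beta(n)$ may be taken arbitrarily large; enlarging it keeps $T_n$ within $1$ of $n/2$ and leaves the two limits above unchanged. The hypothesis $\epsilon<1/2$ is used exactly once, to apply \Cref{thm.beta.prime.sigma} with parameter $2\epsilon\in(0,1)$.
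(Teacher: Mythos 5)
Your proposal is correct and takes essentially the same route as the paper: realise the Gaussian as the $\sigma^2=2\beta\to\infty$ limit of beta-prime distributions, choose $\beta(n)$ large enough that the hypotheses of \Cref{thm.beta.prime.sigma}\ref{forGauss} hold and the threshold exponent $\frac{n}{\sigma^2}(\beta-\frac n2)$ is within $O(1)$ of $n/2$, and transfer the limit. The only difference is that you make the approximation step quantitative (Scheff\'e and a maximal coupling to force $N\,d_{\mathrm{TV}}<1/n$), whereas the paper invokes it more loosely from pointwise convergence of the densities; your version is arguably cleaner since it sidesteps any continuity discussion of $(x_1,\ldots,x_N)\mapsto\mu(\mathrm{conv}\{x_1,\ldots,x_N\})$.
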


The proofs of the above statements can be found in Section \ref{sec.random.conv.hulls}. We stress that in all Theorems \ref{Theorem1}, \ref{thm.V_k.fixed_k} and \ref{thm.beta.prime.sigma}, the parameter $\beta$ is actually allowed to vary with the dimension $n$.

%It is noteworthy that the statements of Theorem \ref{Theorem1} and Theorem \ref{thm.beta.prime.sigma} interpolate in a certain sense between the results of \cite{Piv} and \cite{PivPhd} for the corresponding spherical and Gaussian models: The superexponential-order threshold given by Theorem \ref{Theorem1} fits with the threshold of the same order in \cite[Theorem 2.4]{Piv}. This is to be expected, since the uniform distribution on the sphere arises as the weak limit of the beta distribution, as $\beta\to -1$ (see the proof of Theorem 2.7 in \cite{GKT}). On the other hand, by letting $\beta\to \infty$ in either the beta or the beta-prime model, the Gaussian distribution on $\mathbb{R}^n$ can be recovered as a limiting case; see \cite[Remark 1.20]{KTT}. This connects Theorem \ref{thm.beta.prime.sigma} to \cite[Theorem 2.2.1]{PivPhd}.

Finally, in the spirit of \cite{Piv}, we also treat the dual setting, providing theorems of similar type for polytopes generated as intersections of random halfspaces. More precisely, given $X_1,\ldots,X_N$ chosen independently according to the beta or the beta-prime distribution, we consider the polytopes formed as intersections of the sets
\[
\{x\in \mathbb{R}^n : \langle X_i,x\rangle\ls a\}, \quad i=1,\ldots,N,
\]
for suitable $a>0$. The exact statement and proofs of these results can be found in the final Section \ref{sec.random.facets}.

\section{Notation and auxiliary estimates}\label{sec:notation}

We start this section by collecting some general notation. 
We work in the Euclidean space $\R^n$, $n\in \N = \{ 1 , 2 , \ldots \} $, equipped with the scalar product $\langle \cdot\,, \cdot \rangle$ and corresponding norm $\norm{\cdot}$. By $\B_2^n$ we denote the closed Euclidean unit ball and by $S^{n-1}$ the Euclidean unit sphere, equipped with the unique rotationally invariant probability measure $\sigma$. By $\de x$ we indicate Lebesgue integration in the appropriate dimension, and we use the symbol $\V{\cdot}$ for the volume, i.e., the $n$-dimensional Lebesgue measure. We abbreviate $\vball_n=\V{B_2^n}$. 
%Given $k\ls n$, the set of all $k$-dimensional subspaces of $\mathbb{R}^n$ is denoted by $G_{n,k}$, and comes equipped with the rotationally invariant probability measure $\nu_{n,k}$. 

By a convex body in $\mathbb{R}^n$ we mean a compact, convex subset of $\mathbb{R}^n$ with non-empty interior. 
%We refer the reader to the book \cite{SchneiderBuch} for the necessary background in the theory of convex bodies. 
Given a convex body $K$ in $\mathbb{R}^n$ and $t\gr 0$, the intrinsic volumes $(V_j(K))_{j=0}^n$, \(j\in\{0,\ldots,n\}\),  are the non-negative coefficients of the polynomial in \(t\) that appear in Steiner's formula, (see e.g.  \cite[Equation (4.2.27)]{SchneiderBuch}), namely
\[
V_n(K+tB_2^n) = \sum_{j=0}^n t^{n-j}\kappa_{n-j}V_j(K).
\]

Throughout the text, given two sequences of numbers positive real numbers \((a_n)_{n\in\mathbb{N}}\) and \((b_n)_{n\in\mathbb{N}}\) we will use the notation $ a_n \ll b_n $ for $ a_n = o ( b_n ) $, meaning that $ a_n/b_n \to 0 $, as $ n \to \infty $. Analogously, we will use \(a_n\gg b_n\) meaning $ a_n/b_n\to +\infty $, as $ n \to \infty $. Furthermore, we write $a_n \sim b_n$, if $a_n/b_n\to 1$, as $n\to \infty$.
Finally, we denote the set $\{1,\ldots,n\}$ by $[n]$.

\subsection{The beta and beta-prime distributions}\label{ssec.beta.def}

As aforementioned, our focus in this paper is on two specific classes of probability distributions on $\R^n$, namely, the beta and beta-prime distributions. To introduce the beta distribution, we set
	\[
	c_{n,\beta} := \pi^{-n/2}\frac{\Gamma\left(\beta+\frac{n}{2}+1\right)}{\Gamma(\beta+1)}, \quad\beta>-1,\, n\in \N,
	\]
	and define $\nu_\beta$ to be the probability measure on $B_2^n$ with density function
	\[
	p_{n,\beta}(x) := c_{n,\beta} (1-\norm{x}^2)^\beta,\quad x\in B_2^n.
	\]
	The corresponding one-dimensional marginal density function of $\nu_\beta$ is 
\[
f_{\beta}(t) := \alpha_{n,\beta} (1-t^2)^{\beta+\frac{n-1}{2}}, \qquad t\in [-1,1],
\]
where 
\begin{align*}
\alpha_{n,\beta}:= \frac{c_{n,\beta}}{c_{n-1,\beta}} = \pi^{-1/2}\frac{\Gamma\left(\beta+\frac{n}{2}+1\right)}{
		\Gamma\left(\beta+\frac{n+1}{2}\right)}.
\end{align*}
Finally, for $d\in [0,1]$, we abbreviate 
		\[
		\cdf(d) := \int_d^1 f_{\beta}(t)\de t.
		\]
To introduce the beta-prime distribution, we define
	\[
	    \tilde{c}_{n,\beta,\sigma}:=\sigma^{-n}\pi^{-n/2} \frac{\Gamma(\beta)}{\Gamma(\beta-\frac{n}{2})}, \qquad \beta>\frac{n}{2},\quad \sigma>0, \quad n \in \mathbb{N},
    \]
	and let $\tilde{\nu}_{\beta,\sigma}$ be the probability measure on $\R^n$ with density function
	\[
	    \tilde{p}_{n,\beta,\sigma}(x):=\tilde{c}_{n,\beta,\sigma}\left(1+\frac{\norm{x}^2}{\sigma^2}\right)^{-\beta}, \qquad x \in \R^n.
	\]
Moreover, let 
\begin{align*}
\tilde{\alpha}_{n,\beta,\sigma}:=\frac{\tilde{c}_{n,\beta,\sigma}}{\tilde{c}_{n-1,\beta,\sigma}}=\sigma^{-1} \pi^{-1/2} \frac{\Gamma(\beta-\frac{n-1}{2})}{\Gamma(\beta-\frac{n}{2})}, 
\end{align*}
so that 
	\[
	\tilde{f}_{\beta,\sigma}(t):=\tilde{\alpha}_{n,\beta,\sigma}(1+t^2)^{-\beta+\frac{n-1}{2}},\qquad t\in\R,
	\]
	is the one-dimensional marginal density function of $\tilde{\nu}_{\beta,\sigma}$. Analogously to the beta case, for $d \in [0,\infty)$, we denote 
	\[
	\tilde{\cdf}(d):=\int_d^\infty \tilde{f}_{\beta,\sigma}(t) \de t.
	\]
Estimates on the asymptotic behavior of the distribution functions of $\nu_\beta$ and $\tilde{\nu}_{\beta,\sigma}$, in particular for the functions $\cdf$ and $\tilde{\cdf}$ defined above, play a central role in our work.	We begin with a bound on the ratio of Gamma functions, that is a particular case of Wendel's inequality (see e.g. eq. \((7)\) in \cite{Wendel}), but written in a similar form already in \cite{Artin}.

\begin{lemma}
    \label{lem:Gammabounds}
    For every \(x>1\),
    \[
    \rec{\sqrt{x}}<\frac{\Gamma(x)}{\Gamma(x+\rec{2})}<\rec{\sqrt{x-1}}.
    \]
\end{lemma}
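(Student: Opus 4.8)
The claim is that for every $x > 1$, one has $\frac{1}{\sqrt{x}} < \frac{\Gamma(x)}{\Gamma(x+1/2)} < \frac{1}{\sqrt{x-1}}$. My plan is to deduce this from the log-convexity of the Gamma function (equivalently, the Bohr–Mollerup characterization), which is the standard route and matches the reference to Artin. Concretely, I would prove the two-sided inequality
$$\sqrt{x} \;<\; \frac{\Gamma(x+1)}{\Gamma(x+1/2)} \;<\; \sqrt{x+1}$$
first — note this is just a reindexed form of what is wanted, since $\Gamma(x+1) = x\,\Gamma(x)$, so $\frac{\Gamma(x)}{\Gamma(x+1/2)} = \frac{1}{x}\cdot\frac{\Gamma(x+1)}{\Gamma(x+1/2)}$, and $\frac{\sqrt{x}}{x} = \frac{1}{\sqrt{x}}$ while $\frac{\sqrt{x+1}}{x}$... — wait, that gives $\frac{\sqrt{x+1}}{x}$, not $\frac{1}{\sqrt{x-1}}$. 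So I should instead aim for $\sqrt{x-1} < \frac{\Gamma(x)}{\Gamma(x+1/2)}\cdot x$ differently. Cleaner: work directly with the ratio $R(x) := \Gamma(x)/\Gamma(x+1/2)$ and use the functional equation together with log-convexity to sandwich it.

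Here is the core argument I would carry out. Write $g(t) = \log\Gamma(t)$, which is convex on $(0,\infty)$. For the upper bound, apply convexity on the three points $x-1/2 < x < x + 1/2$: convexity gives $g(x) \le \frac{1}{2}g(x-1/2) + \frac{1}{2}g(x+1/2)$, i.e. $\Gamma(x)^2 \le \Gamma(x-1/2)\Gamma(x+1/2)$. Using $\Gamma(x) = (x-1)\cdot\frac{\Gamma(x-1/2)}{?}$ — actually the useful relation is $\Gamma(x+1/2) = (x-1/2)\Gamma(x-1/2)$, so $\Gamma(x-1/2) = \Gamma(x+1/2)/(x-1/2)$. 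Substituting, $\Gamma(x)^2 \le \Gamma(x+1/2)^2/(x-1/2)$, hence $\frac{\Gamma(x)}{\Gamma(x+1/2)} \le \frac{1}{\sqrt{x-1/2}} < \frac{1}{\sqrt{x-1}}$, which even improves on the stated bound. For the lower bound, apply convexity on $x < x+1/2 < x+1$: $g(x+1/2) \le \frac{1}{2}g(x) + \frac{1}{2}g(x+1)$, i.e. $\Gamma(x+1/2)^2 \le \Gamma(x)\Gamma(x+1) = x\,\Gamma(x)^2$, so $\frac{\Gamma(x)}{\Gamma(x+1/2)} \ge \frac{1}{\sqrt{x}}$. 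Strictness of both inequalities follows because $\log\Gamma$ is strictly convex, so the midpoint inequalities are strict for the distinct points involved.

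The main (and only real) obstacle is bookkeeping: making sure the shifts in the Gamma functional equation are applied correctly and that the domain condition $x > 1$ is exactly what is needed (e.g. $x - 1/2 > 0$ and $x - 1 > 0$ for the square roots and for $\log\Gamma$ to be defined at the leftmost point). One should also double-check that we do indeed want strict inequalities and that strict convexity of $\log\Gamma$ — which is classical, following from $(\log\Gamma)''(t) = \sum_{k\ge 0}(t+k)^{-2} > 0$ — justifies them. If one prefers not to invoke strict convexity of $\log\Gamma$ directly, an alternative is to cite Wendel's inequality verbatim as eq. (7) of \cite{Wendel}; but the log-convexity proof above is short and self-contained, so I would include it.
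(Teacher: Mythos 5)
Your proof is correct. The paper itself does not prove Lemma~\ref{lem:Gammabounds}; it simply cites it as a special case of Wendel's inequality \cite{Wendel}, also found in \cite{Artin}, so there is no ``paper proof'' against which to compare step by step. Your log-convexity argument is sound: taking $g=\log\Gamma$ (strictly convex since $g''(t)=\sum_{k\ge 0}(t+k)^{-2}>0$), the midpoint inequality at $x-\tfrac12<x<x+\tfrac12$ combined with $\Gamma(x+\tfrac12)=(x-\tfrac12)\Gamma(x-\tfrac12)$ yields $\Gamma(x)/\Gamma(x+\tfrac12)<(x-\tfrac12)^{-1/2}$, which for $x>1$ is strictly smaller than $(x-1)^{-1/2}$ (in fact a slightly sharper upper bound than the one stated); and the midpoint inequality at $x<x+\tfrac12<x+1$ together with $\Gamma(x+1)=x\Gamma(x)$ yields $\Gamma(x)/\Gamma(x+\tfrac12)>x^{-1/2}$. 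Strictness is correctly justified by strict convexity of $\log\Gamma$, and the domain condition $x>1$ is exactly what is needed so that $x-\tfrac12>0$ and $x-1>0$. This is essentially the classical route: Wendel's original proof is a Hölder-inequality argument, which is log-convexity in disguise, so your self-contained version is faithful to the cited source while being marginally more informative via the intermediate bound $(x-\tfrac12)^{-1/2}$.
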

The previous inequalities are used in the proof of the following bounds for the distribution function \(\cdf\).

\begin{lemma}\label{lem:B.bounds}
Let $d\in (0,1)$. Then,
\begin{equation*}
\frac{1}{2\sqrt{\pi}}\frac{(1-d^2)^{\beta+\frac{n+1}{2}}}{\sqrt{\beta+\frac{n}{2}+1}} < \cdf(d) < \frac{1}{2d\sqrt{\pi}}\frac{(1-d^2)^{\beta+\frac{n+1}{2}}}{\sqrt{\beta+\frac{n}{2}}}.
\end{equation*}
\end{lemma}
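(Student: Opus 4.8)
The goal is to bound $\cdf(d) = \int_d^1 \alpha_{n,\beta}(1-t^2)^{\beta+\frac{n-1}{2}}\,\de t$ from above and below. Let me write $\gamma := \beta + \frac{n-1}{2}$ for brevity, so the integrand is $\alpha_{n,\beta}(1-t^2)^\gamma$ and the exponent appearing in the claimed bounds is $\gamma+1 = \beta+\frac{n+1}{2}$. The natural idea is to find an explicit antiderivative-like comparison. The first step is to observe that
\[
\frac{\de}{\de t}\bigl[-(1-t^2)^{\gamma+1}\bigr] = 2(\gamma+1)\, t\,(1-t^2)^{\gamma},
\]
so $(1-t^2)^\gamma = \frac{1}{2(\gamma+1)}\cdot\frac{1}{t}\cdot\frac{\de}{\de t}\bigl[-(1-t^2)^{\gamma+1}\bigr]$. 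On the interval $[d,1]$ the factor $1/t$ is decreasing, lying between $1/d$ (at $t=d$) and $1$ (at $t=1$). This immediately gives the two-sided bound
\[
\frac{1}{2(\gamma+1)}(1-d^2)^{\gamma+1} < \int_d^1 (1-t^2)^\gamma\,\de t < \frac{1}{2d(\gamma+1)}(1-d^2)^{\gamma+1},
\]
where for the lower bound one replaces $1/t$ by $1$ and for the upper bound one replaces $1/t$ by $1/d$, then integrates the exact derivative. (One should be slightly careful that the inequalities are strict, which holds since $1/t$ is not constant on $[d,1]$ when $d<1$.)

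The second step is to put back the normalizing constant $\alpha_{n,\beta} = \pi^{-1/2}\,\Gamma(\beta+\frac{n}{2}+1)/\Gamma(\beta+\frac{n+1}{2})$ and to note that $\gamma+1 = \beta+\frac{n+1}{2}$ and $\beta+\frac n2+1 = (\gamma+1)+\frac12$. Thus
\[
\frac{\alpha_{n,\beta}}{\gamma+1} = \pi^{-1/2}\,\frac{1}{\gamma+1}\cdot\frac{\Gamma\bigl((\gamma+1)+\frac12\bigr)}{\Gamma(\gamma+1)}.
\]
Now apply Lemma \ref{lem:Gammabounds} with $x = \gamma+1 = \beta+\frac{n+1}{2} > 1$ (which holds since $\beta>-1$ and $n\ge1$; the edge cases need a word, but for $n\ge 2$ it is automatic and the statement is asymptotic in nature), giving $\sqrt{x-1} < \Gamma(x+\frac12)/\Gamma(x) < \sqrt{x}$. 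Hence
\[
\frac{1}{\gamma+1}\cdot\frac{\Gamma(x+\frac12)}{\Gamma(x)} < \frac{\sqrt{x}}{x} = \frac{1}{\sqrt{x}} = \frac{1}{\sqrt{\beta+\frac{n+1}{2}}},
\]
and similarly the lower bound $\frac{1}{\gamma+1}\cdot\frac{\Gamma(x+\frac12)}{\Gamma(x)} > \frac{\sqrt{x-1}}{x}$. Here a small subtlety arises: the claimed bounds in the lemma have $\sqrt{\beta+\frac n2+1}$ and $\sqrt{\beta+\frac n2}$ in the denominators, i.e. $\sqrt{x+\frac12}$ and $\sqrt{x-\frac12}$ rather than $\sqrt{x}$ and $\sqrt{x-1}$. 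Since $\tfrac{\sqrt x}{x} = \tfrac{1}{\sqrt x} < \tfrac{1}{\sqrt{x-1/2}}$ and $\tfrac{\sqrt{x-1}}{x} > \tfrac{1}{\sqrt{x+1/2}}$ (the latter because $x^2 > (x-1)(x+\tfrac12) = x^2 - \tfrac12 x - \tfrac12$ for $x>1$, wait — one checks $(x-1)(x+\frac12) = x^2-\frac12 x-\frac12 < x^2$, so $\sqrt{(x-1)(x+\frac12)}<x$, i.e. $\sqrt{x-1}/x > 1/\sqrt{x+1/2}$), these two elementary inequalities bridge the gap in the desired direction.

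Combining the first step (the $1/t$ sandwich) with the second step (the Gamma-ratio estimate plus the two elementary square-root inequalities) yields
\[
\frac{1}{2\sqrt\pi}\cdot\frac{(1-d^2)^{\beta+\frac{n+1}{2}}}{\sqrt{\beta+\frac n2+1}} < \cdf(d) < \frac{1}{2d\sqrt\pi}\cdot\frac{(1-d^2)^{\beta+\frac{n+1}{2}}}{\sqrt{\beta+\frac n2}},
\]
as claimed. I do not anticipate a genuine obstacle here; the only thing requiring care is matching the exact shape of the denominators ($\sqrt{\beta+n/2+1}$ versus $\sqrt{\beta+n/2}$), which forces the choice of which elementary square-root inequality to insert on each side, and checking that every inequality used is strict so that the final bounds are strict. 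If one wanted the cleanest possible argument one could simply leave $\sqrt{x}$ and $\sqrt{x-1}$ in the denominators, but the authors evidently want the slightly looser but more symmetric-looking form, so the bridging inequalities are the price to pay.
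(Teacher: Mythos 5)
Your proof is correct and follows essentially the same route as the paper, which substitutes $s=1-t^2$ and sandwiches the factor $(1-s)^{-1/2}$ (equivalent to your sandwich of $1/t$). The only difference is in the way Lemma \ref{lem:Gammabounds} is invoked: the paper first rewrites $\alpha_{n,\beta}/(\beta+\tfrac{n+1}{2})=\pi^{-1/2}\,\Gamma(\beta+\tfrac n2+1)/\Gamma(\beta+\tfrac n2+\tfrac32)$ and applies the Gamma-ratio bound at $x=\beta+\tfrac n2+1$, which produces the denominators $\sqrt{\beta+\tfrac n2+1}$ and $\sqrt{\beta+\tfrac n2}$ directly and spares your two bridging square-root inequalities.
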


\begin{proof}
Using the change of variable $s=1-t^2$, we write
\[
\cdf(d) = \alpha_{n,\beta} \int_d^1 (1-t^2)^{\beta + \frac{n-1}{2}} \de t = \frac{1}{2}\alpha_{n,\beta} \int_0^{1-d^2} s^{\beta+\frac{n-1}{2}}(1-s)^{-\frac{1}{2}}\de s.
\]
Note that since $s\in(0,1-d^2)$, we have $(1-s)^{-1/2} \in (1,d^{-1})$, so
\[
\frac{\alpha_{n,\beta}}{2}\int_0^{1-d^2}s^{\beta+\frac{n-1}{2}}\de s < \cdf(d) < \frac{\alpha_{n,\beta}}{2d}\int_0^{1-d^2}s^{\beta+\frac{n-1}{2}}\de s.
\]
The fact that 
\begin{align*}
\frac{\alpha_{n,\beta}}{\beta+\frac{n+1}{2}}=\rec{\sqrt{\pi}}\frac{\Gamma(\beta+\frac{n}{2}+1)}{\Gamma(\beta+\frac{n}{2}+\frac{3}{2})},
\end{align*}
together with \Cref{lem:Gammabounds}, completes the proof.
\end{proof}

\begin{remark}
Note that an adaptation of the above proof yields similar estimates on the growth of $\tilde{\cdf}$ if the parameter $\sigma$ is an absolute constant. For instance if $\sigma=1$, one has that
\begin{equation}\label{eq.tilde(F).sigma=1.bounds}
\frac{1}{2\sqrt{\pi}} \frac{(1+d^2)^{-\beta+\frac{n}{2}}}{\sqrt{\beta-\frac{n-1}{2}}} < \tilde{\cdf}(d) < \frac{1}{\sqrt{2\pi}} \frac{(1+d^2)^{-\beta+\frac{n}{2}}}{\sqrt{\beta-\frac{n+1}{2}}}
\end{equation}
for every $d>1$. Yet, in the general case where the parameter $\sigma$ could vary with $\beta$ or $n$ we will show that the asymptotic behaviour of $\tilde{\cdf}$ in terms of $\sigma, \beta$ and $n$ actually depends on the growth rate of the quantity $n/\sigma^2$. This will result to the different threshold results in the statement of Theorem \ref{thm.beta.prime.sigma}.
\end{remark}

To deal with the distribution function $\tilde{\cdf}$ for an arbitrary $\sigma>0$, we will use a different argument. Note first that a suitable substitution provides
\begin{equation} \label{eq.cdf.rewritten.aux}
    \tilde{ \cdf } ( d ) 
    = \frac{ \tilde{\alpha}_{n,\beta,\sigma} }{ \sqrt{ 2 b_n } } \int_{ a_n }^\infty \left( 1 + \frac{ s^2 }{ 2 b_n } \right)^{ - b_n } \de{ s },
    \quad \text{$ b_n = \beta - \frac{ n - 1 }{ 2 } $ and $ a_n =  d \frac{ \sqrt{ 2 b_n } }{ \sigma } $} .
\end{equation}
It is easy to see that $ \frac{ \tilde{\alpha}_{n,\beta,\sigma} }{ \sqrt{ 2 b_n } } \to \frac{1}{\sqrt{2\pi}} $ whenever $ b_n \to \infty $.
The estimates of $ \tilde{ \cdf } ( d ) $ which will appear in the proof of Theorem \ref{thm.beta.prime.sigma} are based on \eqref{eq.cdf.rewritten.aux} and the following lemma.
\begin{lemma} \label{lem.tail.Ftilda}
    Let $ ( a_n )_{ n \in \N } $, $ ( b_n )_{ n \in \N } $ be two sequences with $ a_n \geq 0 $ and $ \frac{ 1 }{ 2 }<b_n\to\infty$.
    \begin{enumerate}[label={\emph{(\alph*)}}]
        \item If $ \frac{ a_n^4 }{ b_n } \to 0 $, then, 
        \[ \int_{ a_n }^\infty \left( 1 + \frac{ t^2 }{ 2 b_n } \right)^{ - b_n } \de{ t } 
        \sim \int_{ a_n }^\infty e^{ - \frac{ t^2 }{ 2 } } \de{ t } .\]
        If additionally $ a_n \to \infty $, then, 
        \[ \int_{ a_n }^\infty \left( 1 + \frac{ t^2 }{ 2 b_n } \right)^{ - b_n } \de{ t } 
        \sim \frac{ e^{ - \frac{ a_n^2 }{ 2 } } }{ a_n } .\]
        \item If $ \frac{ a_n^2 }{ b_n } \to \infty $, then, 
        \[ \int_{ a_n }^\infty \left( 1 + \frac{ t^2 }{ 2 b_n } \right)^{ - b_n } \de{ t } 
        \sim \frac{ 1}{ \sqrt{ 2b_n }} \left( 1 + \frac{ a_n^2 }{ 2 b_n } \right)^{ - ( b_n - \frac{ 1 }{ 2 } ) } . \]
    \end{enumerate}
\end{lemma}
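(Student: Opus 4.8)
The plan is to analyse the single integral $I_n:=\int_{a_n}^\infty\bigl(1+\tfrac{t^2}{2b_n}\bigr)^{-b_n}\de t$ directly, comparing it in case (a) with the Gaussian tail $J_n:=\int_{a_n}^\infty e^{-t^2/2}\de t$ and in case (b) with its own leading term, in both cases by combining elementary pointwise estimates for $u\mapsto(1+u)^{-b_n}$ with one integration by parts. Three facts will be used repeatedly: from $\log(1+u)\le u$ for $u\ge 0$ one gets $\bigl(1+\tfrac{t^2}{2b_n}\bigr)^{-b_n}\ge e^{-t^2/2}$ for all $t\ge0$; from $\log(1+u)\ge u-\tfrac{u^2}{2}$ for $u\ge0$ one gets $\bigl(1+\tfrac{t^2}{2b_n}\bigr)^{-b_n}\le e^{t^4/(8b_n)}e^{-t^2/2}$; and, for $t\ge s\ge0$, the truncation bound $\bigl(1+\tfrac{t^2}{2b_n}\bigr)^{-b_n}\le\bigl(1+\tfrac{s^2}{2b_n}\bigr)^{-(b_n-1)}\bigl(1+\tfrac{t^2}{2b_n}\bigr)^{-1}$, which, after integrating and using $\int_0^\infty\bigl(1+\tfrac{t^2}{2b_n}\bigr)^{-1}\de t=\tfrac{\pi}{2}\sqrt{2b_n}$, yields a clean bound on the tail of $I_n$. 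I would also record once, for $m>1$ (so that $\bigl(1+\tfrac{t^2}{2b_n}\bigr)^{1-m}/t\to0$ as $t\to\infty$), the integration-by-parts identity
\[
\int_{a_n}^\infty\Bigl(1+\tfrac{t^2}{2b_n}\Bigr)^{-m}\de t=\frac{b_n}{(m-1)a_n}\Bigl(1+\tfrac{a_n^2}{2b_n}\Bigr)^{1-m}-\frac{b_n}{m-1}\int_{a_n}^\infty t^{-2}\Bigl(1+\tfrac{t^2}{2b_n}\Bigr)^{1-m}\de t ,
\]
which follows by parts after noting that $\bigl(1+\tfrac{t^2}{2b_n}\bigr)^{1-m}$ has derivative $\tfrac{(1-m)t}{b_n}\bigl(1+\tfrac{t^2}{2b_n}\bigr)^{-m}$; since the remaining integral is nonnegative, in particular $\int_{a_n}^\infty\bigl(1+\tfrac{t^2}{2b_n}\bigr)^{-m}\de t\le\tfrac{b_n}{(m-1)a_n}\bigl(1+\tfrac{a_n^2}{2b_n}\bigr)^{1-m}$. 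As all conclusions are asymptotic, I may assume throughout that $b_n>2$.

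For part (a), the lower bound $I_n\ge J_n$ is immediate from the first fact. For the matching upper bound I would split at $T_n:=\bigl((a_n+1)^2+4\log b_n\bigr)^{1/2}$: the hypothesis $a_n^4/b_n\to0$ (together with $(\log b_n)^2/b_n\to0$) gives $T_n^4/b_n\to0$, while $T_n\ge a_n+1$. On $[a_n,T_n]$ the second fact yields $\int_{a_n}^{T_n}\bigl(1+\tfrac{t^2}{2b_n}\bigr)^{-b_n}\de t\le e^{T_n^4/(8b_n)}J_n=(1+o(1))J_n$. On $[T_n,\infty)$, the truncation bound with $s=T_n$ followed by $\int_{T_n}^\infty\bigl(1+\tfrac{t^2}{2b_n}\bigr)^{-1}\de t\le\int_0^\infty\bigl(1+\tfrac{t^2}{2b_n}\bigr)^{-1}\de t=\tfrac{\pi}{2}\sqrt{2b_n}$ gives $\int_{T_n}^\infty\bigl(1+\tfrac{t^2}{2b_n}\bigr)^{-b_n}\de t\le\tfrac{\pi}{2}\sqrt{2b_n}\,\bigl(1+\tfrac{T_n^2}{2b_n}\bigr)^{-(b_n-1)}$; since $T_n^2/b_n\to0$, one has $(b_n-1)\log\bigl(1+\tfrac{T_n^2}{2b_n}\bigr)\ge(1-o(1))\tfrac{T_n^2}{2}$, so by the choice of $T_n$ this tail is at most $\tfrac{\pi}{2}\sqrt{2b_n}\,(1+o(1))\,b_n^{-2}\,e^{-(a_n+1)^2/2}=o(1)\,e^{-(a_n+1)^2/2}$. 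As $J_n\ge\int_{a_n}^{a_n+1}e^{-t^2/2}\de t\ge e^{-(a_n+1)^2/2}$, this tail is $o(J_n)$, whence $I_n=(1+o(1))J_n$. The second assertion of (a) then follows by combining $I_n\sim J_n$ with the standard estimate $J_n\sim e^{-a_n^2/2}/a_n$, valid when $a_n\to\infty$.

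For part (b), note that $a_n^2/b_n\to\infty$ and $b_n\to\infty$ force $a_n\to\infty$. Apply the displayed identity with $m=b_n$ to write $I_n=M_n-\tfrac{b_n}{b_n-1}R_n$, where $M_n:=\tfrac{b_n}{(b_n-1)a_n}\bigl(1+\tfrac{a_n^2}{2b_n}\bigr)^{1-b_n}$ and $R_n:=\int_{a_n}^\infty t^{-2}\bigl(1+\tfrac{t^2}{2b_n}\bigr)^{1-b_n}\de t\ge0$. Using $\bigl(1+\tfrac{a_n^2}{2b_n}\bigr)^{1/2}=\sqrt{2b_n+a_n^2}/\sqrt{2b_n}$ together with $\tfrac{b_n}{b_n-1}\to1$ and $\sqrt{2b_n+a_n^2}/a_n\to1$ shows $M_n\sim\tfrac{1}{\sqrt{2b_n}}\bigl(1+\tfrac{a_n^2}{2b_n}\bigr)^{-(b_n-1/2)}$, the claimed value (and also $M_n\sim\tfrac{1}{a_n}\bigl(1+\tfrac{a_n^2}{2b_n}\bigr)^{1-b_n}$). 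It remains to show $R_n=o(M_n)$: bounding $t^{-2}\le a_n^{-2}$ and applying the upper bound above with $m=b_n-1$ gives $R_n\le\tfrac{b_n}{(b_n-2)a_n^3}\bigl(1+\tfrac{a_n^2}{2b_n}\bigr)^{2-b_n}$, hence
\[
\frac{R_n}{\tfrac{1}{a_n}\bigl(1+\tfrac{a_n^2}{2b_n}\bigr)^{1-b_n}}\le\frac{b_n}{(b_n-2)a_n^2}\Bigl(1+\tfrac{a_n^2}{2b_n}\Bigr)=\frac{2b_n+a_n^2}{2(b_n-2)a_n^2}\to0 ,
\]
using $a_n\to\infty$ and $b_n\to\infty$. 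Therefore $\tfrac{b_n}{b_n-1}R_n=o(M_n)$ and $I_n\sim M_n$, as asserted.

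The main obstacle, in both parts, is calibrating the error terms against main terms that are potentially exponentially small. In (a) the cutoff $T_n$ must be pushed far enough that the discarded tail beats $e^{-(a_n+1)^2/2}$ by a genuine polynomial-in-$b_n$ factor, yet still satisfy $T_n^4\ll b_n$ so that the Gaussian comparison stays valid on $[a_n,T_n]$; it is precisely the quantitative hypothesis $a_n^4/b_n\to0$ that keeps this window nonempty. In (b) the corresponding point is to check that a single extra integration by parts already makes the remainder of lower order --- which is exactly what the crude bound $t^{-2}\le a_n^{-2}$ combined with $a_n\to\infty$ delivers.
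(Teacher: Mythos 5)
Your proof is correct, and the route is genuinely different from the paper's. You share the two elementary pointwise comparisons $(1+\frac{t^2}{2b_n})^{-b_n}\ge e^{-t^2/2}$ and $(1+\frac{t^2}{2b_n})^{-b_n}\le e^{t^4/(8b_n)}e^{-t^2/2}$, and the near-window analysis in part (a) coincides with the paper's. But for tail control the paper invokes Laplace's method after exploiting monotonicity of $b\mapsto(1+\frac{t^2}{2b})^{-b}$ and a substitution, and for part (b) it substitutes $s=(1+\frac{t^2}{2b_n})^{-1}$ and sandwiches the factor $(1-s)^{-1/2}$. You replace both devices with a single integration-by-parts identity and the product bound $(1+\frac{t^2}{2b_n})^{-b_n}\le(1+\frac{s^2}{2b_n})^{-(b_n-1)}(1+\frac{t^2}{2b_n})^{-1}$ for $t\ge s$, paying for the tail with the explicit arctangent integral. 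This buys two things: the first claim in (a) is established uniformly in $a_n$, whereas the paper distinguishes bounded $a_n$ from $a_n\to\infty$ and so leaves a (harmless) subsequence step implicit when $a_n$ does neither; and the whole argument is self-contained, with no appeal to a general asymptotic-expansion theorem. The paper's substitution in (b) perhaps explains more transparently where the extra half-power in the exponent comes from, but your integration-by-parts boundary term recovers the same leading term and the crude bound $t^{-2}\le a_n^{-2}$ kills the remainder. One small point worth spelling out in your write-up: passing from $(b_n-1)\log(1+\frac{T_n^2}{2b_n})\ge(1-o(1))\frac{T_n^2}{2}$ to the displayed $b_n^{-2}e^{-(a_n+1)^2/2}$ bound also requires $o(1)\cdot T_n^2\to 0$; this does follow from $T_n^4/b_n\to 0$, but multiplying a divergent exponent by $1-o(1)$ is not, by itself, an $o(1)$ perturbation of it.
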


The main ingredient to prove Lemma \ref{lem.tail.Ftilda} is Laplace's method.
We refer the reader to Theorem 1.1 of \cite{Wong} for a more general version of the following lemma.
\begin{lemma}[Special case of Laplace's method] \label{lem:Laplace}
    Let $ h \colon [ a , \infty ) \to \R $ be a strictly increasing and differentiable function.
    Then, as $ \lambda \to \infty $,
    \[
        \int_a^\infty e^{ - \lambda h ( t ) } \de{ t }
        \sim \frac{ e^{ - \lambda h ( a ) } }{ \lambda h' ( a ) } .
    \]
\end{lemma}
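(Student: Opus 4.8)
The plan is to factor out the leading exponential and then compute the asymptotics of the remaining integral by comparison with elementary exponential integrals, using the differentiability of $h$ only at the endpoint $a$. Write
\[
\int_a^\infty e^{-\lambda h(t)}\de t = e^{-\lambda h(a)}\int_a^\infty e^{-\lambda(h(t)-h(a))}\de t ,
\]
so it suffices to show that $\lambda\int_a^\infty e^{-\lambda(h(t)-h(a))}\de t\to 1/h'(a)$ as $\lambda\to\infty$; here $h'(a)>0$ and the integral is finite for all large $\lambda$, which is implicit in the statement and automatic in the situations where the lemma is applied (after the substitution in \eqref{eq.cdf.rewritten.aux} the relevant $h$ grows at least like a constant times $t^2$ or $\log t$). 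Fix $\epsilon\in(0,h'(a))$. Differentiability of $h$ at $a$ produces a $\delta>0$ with
\[
(h'(a)-\epsilon)(t-a)\ls h(t)-h(a)\ls (h'(a)+\epsilon)(t-a)\qquad\text{for }t\in[a,a+\delta].
\]

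For the lower bound I would restrict the integral to $[a,a+\delta]$ and use the right-hand inequality:
\[
\int_a^\infty e^{-\lambda(h(t)-h(a))}\de t\gr\int_a^{a+\delta}e^{-\lambda(h'(a)+\epsilon)(t-a)}\de t=\frac{1-e^{-\lambda(h'(a)+\epsilon)\delta}}{\lambda(h'(a)+\epsilon)} ,
\]
whence $\liminf_{\lambda\to\infty}\lambda\int_a^\infty e^{-\lambda(h(t)-h(a))}\de t\gr 1/(h'(a)+\epsilon)$; letting $\epsilon\to 0^+$ gives the right inequality. For the upper bound I would split $[a,\infty)=[a,a+\delta]\cup[a+\delta,\infty)$. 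On $[a,a+\delta]$ the left-hand inequality gives $\int_a^{a+\delta}e^{-\lambda(h(t)-h(a))}\de t\ls 1/(\lambda(h'(a)-\epsilon))$. For the tail, fix $\lambda_0$ with $M:=\int_a^\infty e^{-\lambda_0(h(t)-h(a))}\de t<\infty$; since $h$ is strictly increasing, $h(t)-h(a)\gr h(a+\delta)-h(a)=:c>0$ on $[a+\delta,\infty)$, so for $\lambda>\lambda_0$
\[
\int_{a+\delta}^\infty e^{-\lambda(h(t)-h(a))}\de t\ls e^{-(\lambda-\lambda_0)c}\int_{a+\delta}^\infty e^{-\lambda_0(h(t)-h(a))}\de t\ls M\,e^{-(\lambda-\lambda_0)c}=o(1/\lambda) .
\]
Hence $\limsup_{\lambda\to\infty}\lambda\int_a^\infty e^{-\lambda(h(t)-h(a))}\de t\ls 1/(h'(a)-\epsilon)$, and letting $\epsilon\to 0^+$ closes the argument.

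The only genuinely delicate point is the tail estimate: because $h$ is merely assumed strictly increasing, one cannot quantify how fast $h(t)-h(a)$ grows, so the crude bound "$h(t)-h(a)\gr c$ on $[a+\delta,\infty)$'' is not by itself enough to beat $1/\lambda$. The device of borrowing a fixed amount of decay from an exponent $\lambda_0$ for which the integral already converges upgrades this crude bound to one that is exponentially small in $\lambda$, which is exactly what makes the tail negligible. I would also note that this elementary comparison uses differentiability of $h$ only at $a$ and needs no continuity of $h'$, which is why it is preferable here to the alternative substitution $s=\lambda(h(t)-h(a))$ followed by dominated convergence.
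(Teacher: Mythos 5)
Your proof is correct. The paper does not actually prove this lemma---it refers the reader to Theorem 1.1 of Wong's \emph{Asymptotic approximations of integrals} for a more general statement---so there is no in-paper argument to compare against, and your self-contained proof supplies something the authors chose to omit. Your route is the standard endpoint version of Laplace's method and it is carried out carefully: the two-sided linearization of $h$ near $a$ (which, as you observe, uses differentiability only at the single point $a$ and no continuity of $h'$), the elementary evaluation of the resulting exponential integrals, and, the one genuinely delicate step, the tail bound. There the crude estimate $h(t)-h(a)\gr c>0$ on $[a+\delta,\infty)$ by itself produces only a quantity proportional to the full integral, not $o(1/\lambda)$; borrowing a fixed exponent $\lambda_0$ for which the integral already converges in order to extract the factor $e^{-(\lambda-\lambda_0)c}$ is exactly the right fix, and you explain clearly why it is needed. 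You also correctly flag the two hypotheses that the lemma leaves implicit: $h'(a)>0$ (strict monotonicity alone does not force this, e.g.\ $h(t)=(t-a)^3$) and integrability of $e^{-\lambda h}$ for some $\lambda$. Both hold at the single place the paper invokes the lemma, namely in the proof of Lemma \ref{lem.tail.Ftilda}, where $h(s)=\log(1+s^2)$ on $[1/\sqrt 2,\infty)$ with $h'(1/\sqrt 2)=2\sqrt 2/3>0$ and $\int_{1/\sqrt2}^\infty (1+s^2)^{-\lambda}\de s<\infty$ for $\lambda>1/2$.
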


% \begin{proof}
%   This is a well know result.
%   See e.g.\ Theorem 1.1. page 58 of \cite{Wong}.
% %    We apply Theorem 1.1. page 58 of \cite{Wong}.
% %    For the setting of this Theorem, we have $ \mu = 1 $, $ a_0 = h' ( a ) $, $ b_0 = 1 $ and $ \alpha = 1 $.
% %    This gives $ c_0 := \frac{ b_0 }{ \mu a_0^{ \alpha / \mu } } = 1 / h' ( a ) $. 
% %    In particular the first term of the series $ \sum_{ s = 0 }^\infty \Gamma \left( \frac{ s + \alpha }{ \mu } \right) \frac{ c_s }{ \lambda^{ ( s + \alpha ) / \mu } } $ is equal to $ 1 / ( \lambda h' ( a ) ) $
% \end{proof}

\begin{proof}[Proof of Lemma \ref{lem.tail.Ftilda}]
    We first show a pair of auxiliary estimates.
    The inequality
    $ x - \frac{ x^2 }{ 2 }  
    \leq \log \left( 1 + x \right)
    \leq x $
    gives that
    \[ 1 
    \leq \frac{ \left( 1 + \frac{ t^2 }{ 2 b_n } \right)^{ - b_n } }{ e^{ - \frac{ t^2 }{ 2 } } }
    = \exp \left( - b_n \log \left( 1 + \frac{ t^2 }{ 2 b_n } \right) + \frac{ t^2 }{ 2 } \right)
    \leq e^{ \frac{ t^4 }{ 8 b_n } } . \]
    Therefore, for any couple of sequences $ 0 \leq c_n < d_n $, we have
    \[
        \int_{ c_n }^{ d_n } e^{ - \frac{ t^2 }{ 2 } } \de{ t }  
        \leq \int_{ c_n }^{ d_n } \left( 1 + \frac{ t^2 }{ 2 b_n } \right)^{ - b_n } \de{ t } 
        \leq e^{ \frac{ d_n^4 }{ 8 b_n } } \int_{ c_n }^{ d_n } e^{ - \frac{ t^2 }{ 2 } } \de{ t } ,
    \]
    and in particular
    \begin{equation} \label{eq.approx.small.cn}
        \frac{ d_n^4 }{ b_n } \to 0
        \ \Rightarrow \ 
        \int_{ c_n }^{ d_n } \left( 1 + \frac{ t^2 }{ 2 b_n } \right)^{ - b_n } \de{ t } 
        \sim \int_{ c_n }^{ d_n } e^{ - \frac{ t^2 }{ 2 } } \de{ t } .
    \end{equation}
    If additionally $ c_n \to \infty $ we can get a more explicit approximation by using a substitution and the Laplace's method.
    The new estimate is
    \begin{equation} \label{eq.approx.small.cn.infty}
        \frac{ d_n^4 }{ b_n } \to 0 \text{ and } c_n \to \infty 
        \ \Rightarrow \ 
        \int_{ c_n }^{ d_n } \left( 1 + \frac{ t^2 }{ 2 b_n } \right)^{ - b_n } \de{ t } 
        \sim \frac{ e^{ - \frac{ c_n^2 }{ 2 } } }{ c_n } - \frac{ e^{ - \frac{ d_n^2 }{ 2 } } }{ d_n } .
    \end{equation}
    
    Since for any $ t $, the map $ ( \frac{ 1 }{ 2 } , \infty ) \ni b \mapsto \left( 1 + \frac{ t^2 }{ 2 b } \right)^{ - b } $ is decreasing, we have that for any sequence $ ( c_n )_{ n \in \N } $ with $ \frac{ 1 }{ 2 } < c_n^2 < b_n $,
    \begin{equation*}
        \begin{split}
            \int_{ c_n }^\infty \left( 1 + \frac{ t^2 }{ 2 b_n } \right)^{ - b_n } \de{ t }
            & \leq \int_{ c_n }^\infty \left( 1 + \frac{ t^2 }{ 2 c_n^2 } \right)^{ - c_n^2 } \de{ t }
            \\ & = \sqrt{ 2 } c_n \int_{ \frac{ 1 }{ \sqrt{ 2 } } }^\infty \left( 1 + s^2 \right)^{ - c_n^2 } \de{ s }
            \\ & = \sqrt{ 2 } c_n \int_{ \frac{ 1 }{ \sqrt{ 2 } } }^\infty \exp \left(  - c_n^2 \log ( 1 + s^2 ) \right) \de{ s } .
        \end{split}
    \end{equation*}
    Laplace's method, see Lemma \ref{lem:Laplace}, now implies that for $ c_n \to \infty $,
    \begin{equation*}
        \int_{ \frac{ 1 }{ \sqrt{ 2 } } }^\infty \exp \left(  - c_n^2 \log ( 1 + s^2 ) \right) \de{ s }
        \sim \frac{ \exp \left( - c_n^2 \log ( \frac{ 5 }{ 4 } ) \right) }{ c_n^2  } 
    \end{equation*}
    In particular,
    \begin{equation} \label{eq.approx.small.cn.tail}
         b_n > c_n^2 \text{ and } c_n \to \infty
         \Rightarrow \int_{ c_n }^\infty \left( 1 + \frac{ t^2 }{ 2 b_n } \right)^{ - b_n } \de{ t }
         = o \left( e^{ - c_n^2 \log ( \frac{ 5 }{ 4 } ) } \right) . 
    \end{equation}
    Now we have all the ingredients to show Lemma \ref{lem.tail.Ftilda} (a), but we need to distinguish the case where $ a_n \to \infty $ from the case where $ a_n $ is bounded.
    
    First, we assume that $ a_n $ is bounded.
    Let $ c_n > a_n $ be a sequence such that $ \frac{ c_n^4 }{ b_n } \to 0 $ and $ c_n \to \infty $. 
    Splitting the integral in two parts and applying \eqref{eq.approx.small.cn} with $ a_n $ and $ c_n $, gives
    \begin{equation*}
        \begin{split}
            \int_{ a_n }^\infty \left( 1 + \frac{ t^2 }{ 2 b_n } \right)^{ - b_n } \de{ t }
            & = \int_{ a_n }^{ c_n } \left( 1 + \frac{ t^2 }{ 2 b_n } \right)^{ - b_n } \de{ t } 
            + \int_{ c_n }^{ \infty } \left( 1 + \frac{ t^2 }{ 2 b_n } \right)^{ - b_n } \de{ t }
            \\ & \sim \int_{ a_n }^{ c_n } e^{ - \frac{ t^2 }{ 2 } } \de{ t }  +  o ( 1 )
            \\ & \sim \int_{ a_n }^{ \infty } e^{ - \frac{ t^2 }{ 2 } } \de{ t } .
        \end{split}
    \end{equation*}
    
    Second, we assume that $ a_n \to \infty $.
    We split the integral in two parts and use the estimates \eqref{eq.approx.small.cn.infty} with $ c_n = a_n $ and $ d_n = 2 a_n $, and \eqref{eq.approx.small.cn.tail} with $ c_n = 2 a_n $.
    This gives
    \begin{equation*}
        \begin{split}
            \int_{ a_n }^\infty \left( 1 + \frac{ t^2 }{ 2 b_n } \right)^{ - b_n } \de{ t }
            & = \int_{ a_n }^{ 2 a_n } \left( 1 + \frac{ t^2 }{ 2 b_n } \right)^{ - b_n } \de{ t } 
            + \int_{ 2 a_n }^{ \infty } \left( 1 + \frac{ t^2 }{ 2 b_n } \right)^{ - b_n } \de{ t }
            \\ & \sim \frac{ e^{ - \frac{ a_n^2 }{ 2 } } }{ a_n } - \frac{ e^{ - 2 a_n^2 } }{ 2 a_n } +  o \left( e^{ - 4 a_n^2 \log ( \frac{ 5 }{ 4 } ) } \right)
            \\ & \sim \frac{ e^{ - \frac{ a_n^2 }{ 2 } } }{ a_n } .
        \end{split}
    \end{equation*}
    
    \bigskip
    To show part (b) of Lemma \ref{lem.tail.Ftilda}, note that the substitution $ s = ( 1 + \frac{ t^2 }{ 2 b_n } )^{ - 1 } $ gives
    \begin{equation*}
        \begin{split}
            \int_{ a_n }^\infty \left( 1 + \frac{ t^2 }{ 2 b_n } \right)^{ - b_n } \de{ t } 
            & = \sqrt{ \frac{ b_n }{ 2 } } \int_0^{ \left( 1 + \frac{ a_n^2 }{ 2 b_n } \right)^{ - 1 } } s^{ b_n - \frac{ 3 }{ 2 } } ( 1 - s )^{ - \frac{ 1 }{ 2 } } \de{ s } .
        \end{split}
    \end{equation*}
    But, since $ ( 0 , 1 ) \ni s \mapsto ( 1 - s )^{ -\frac{ 1 }{ 2 } } $ is increasing, we have
    \begin{equation*}
        \begin{split}
            1
            \leq  \frac{ \sqrt{ \frac{ b_n }{ 2 } } \int_0^{ \left( 1 + \frac{ a_n^2 }{ 2 b_n } \right)^{ - 1 } } s^{ b_n - \frac{ 3 }{ 2 } } ( 1 - s )^{ - \frac{ 1 }{ 2 } } \de{ s } }{ \sqrt{ \frac{ b_n }{ 2 } } \int_0^{ \left( 1 + \frac{ a_n^2 }{ 2 b_n } \right)^{ - 1 } } s^{ b_n - \frac{ 3 }{ 2 } } \de{ s } }
            \leq \left( 1 - \left( 1 + \frac{ a_n^2 }{ 2 b_n } \right)^{ - 1 } \right)^{ - \frac{ 1 }{ 2 } }  .
        \end{split}
    \end{equation*}
    Observe that the right hand side of the last equation tends to $ 1 $ because $ \frac{ a_n^2 }{ b_n } \to \infty $.
    Thus, the two last equations provide the equivalence
    \begin{equation*}
        \begin{split}
            \int_{ a_n }^\infty \left( 1 + \frac{ t^2 }{ 2 b_n } \right)^{ - b_n } \de{ t } 
            & \sim \sqrt{ \frac{ b_n }{ 2 } } \int_0^{ \left( 1 + \frac{ a_n^2 }{ 2 b_n } \right)^{ - 1 } } s^{ b_n - \frac{ 3 }{ 2 } } \de{ s } 
            \\ & \sim \frac{ \sqrt{ b_n } }{ \sqrt{ 2 } ( b_n - \frac{ 1 }{ 2 } ) } \left( 1 + \frac{ a_n^2 }{ 2 b_n } \right)^{ - ( b_n - \frac{ 1 }{ 2 } ) }\\
       &\sim \frac{ 1}{ \sqrt{ 2b_n }}  \left( 1 + \frac{ a_n^2 }{ 2 b_n } \right)^{ - ( b_n - \frac{ 1 }{ 2 } ) } , \end{split}
    \end{equation*}
    which completes the proof.
\end{proof}

\subsection{Isotropic log-concave probability measures} \label{sec:iso.log.conv.prob.meas}

A probability measure $\mu$ on $\R^n$ is called \emph{log-concave}, if for all compact subsets $A,B$ of $\R^n$ and all $\lambda\in (0,1)$,
\[
\mu((1-\lambda)A + \lambda B) \gr \mu(A)^{1-\lambda}\mu(B)^{\lambda}.
\]
It is called \emph{isotropic}, if its center of mass is at the origin, i.e., 
\[
\int_{\R^n} \langle x,\theta\rangle\de \mu(x) = 0
\]
holds for every $\theta\in S^{n-1}$, and satisfies the isotropic condition, that is,
\[
\int_{\R^n} \langle x,\theta\rangle^2 \de \mu(x) = 1
\]
for all $\theta\in S^{n-1}$.

In the sequel, we will rely on the so-called thin-shell concentration property of isotropic log-concave probability measures. Answering a central question in asymptotic convex geometry (see \cite{ABP}), Klartag \cite[Theorem 1.4]{Kla} proved that an isotropic log-concave measure is typically concentrated on a ``thin spherical shell'' around the Euclidean ball of radius $\sqrt{n}$. The statement reads as follows. 
\begin{theorem}[Thin shell concentration]\label{thm:concentration}
Let $\mu$ be an isotropic log-concave probability measure in $\mathbb{R}^n$. Then, for every $\epsilon\in(0,1)$,
\begin{equation}\label{eq.klartag.thinshell}
\mu\bigl(\bigl\{x\in\R^n : \abs[\big]{\norm{x}-\sqrt{n}} \gr\epsilon\sqrt{n}\}) \ls Cn^{-c\epsilon^2},
\end{equation}
for some absolute constants $c, C>0$.
\end{theorem}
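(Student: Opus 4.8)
\textit{Proof proposal.} This is the thin-shell estimate of Klartag \cite[Theorem 1.4]{Kla}, a genuinely deep result; I would not give a self-contained argument but proceed as follows. Write $R = \norm{X}$ for $X$ distributed according to $\mu$. The first step is to pass from $R$ to $R^2$: isotropicity gives $\Exval{R^2} = \sum_{i=1}^n \Exval{\langle X, e_i\rangle^2} = n$, and since $\epsilon\in(0,1)$ one has $\{\abs{R - \sqrt n}\gr \epsilon\sqrt n\}\subseteq\{\abs{R^2 - n}\gr \epsilon n\}$ (because $\abs{R-\sqrt n}\gr\epsilon\sqrt n$ forces $\abs{R^2-n}\gr(2\epsilon-\epsilon^2)n\gr\epsilon n$). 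Hence it suffices to bound $\mu(\abs{R^2 - n}\gr \epsilon n)$ by $Cn^{-c\epsilon^2}$, that is, to prove that the squared Euclidean norm concentrates around its mean with a power-law rate.

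The second step — the substantial one — is to establish this concentration. Here I would follow Klartag's analysis of the logarithmic Laplace transform $\Lambda(\xi) = \log\int_{\R^n}e^{\langle \xi, x\rangle}\de\mu(x)$ and the associated moment estimates for linear functionals of $X$. Two consequences of log-concavity are decisive: Borell's lemma (via the Prékopa--Leindler inequality) gives reverse Hölder inequalities, forcing $p\mapsto(\Exval{\abs{\langle X,\theta\rangle}^p})^{1/p}$ to vary slowly in $p$; and a convexity-based differential inequality for $\Lambda$ lets one compare $\mathrm{Var}(R)$ with lower-order quantities. Bootstrapping this comparison turns the trivial bound $\mathrm{Var}(R) = O(n)$ into a sub-Gaussian-type tail for $R$ with variance parameter of order $n/\log n$, which is a quantitative form of \eqref{eq.klartag.thinshell}.

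The hard part is precisely this logarithmic improvement. Elementary tools — Borell's lemma applied directly, or Paouris' large-deviation inequality for $\norm{X}$ — only yield $\mathrm{Var}(R) = O(n)$, and for such a bound the right-hand side of \eqref{eq.klartag.thinshell} does not tend to $0$; extracting the extra factor is the entire content of \cite{Kla}. For the present paper I would therefore just invoke \cite[Theorem 1.4]{Kla} and move on.
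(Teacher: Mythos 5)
Your proposal is correct and takes the same approach as the paper: both simply invoke Klartag's Theorem 1.4 from \cite{Kla} without giving a self-contained proof, as the thin-shell estimate is a deep result whose derivation lies outside the scope of the article. Your preliminary reduction from $\norm{X}$ to $\norm{X}^2$ and the accompanying sketch of Klartag's log-Laplace-transform argument are sound but superfluous for the purpose of this paper, since the cited theorem already states the bound in exactly the required form for $\norm{X}$.
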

Results of this type are closely linked to the long-standing thin shell conjecture, which asks, if the quantity $\Ex \bigl(\norm{X}-\sqrt{n}\bigr)^2$ can be uniformly bounded by a constant independent of the dimension, for any random vector $X$ distributed according to an isotropic and log-concave probability measure on $\R^n$. For more information on the history of this problem, recent improvements of \Cref{thm:concentration}, as well as the general theory of isotropic log-concave probability measures, one can consult the monograph \cite{BGVV}. 

	\section{Convex hulls of random points}\label{sec.random.conv.hulls}

Recall that by $P_{N,n}^\beta$ and $\tilde{P}_{N,n}^{\beta,\sigma}$ we denote the convex hulls arising from $N>n$ independent random points in $\R^n$, distributed according to the beta distribution with parameter \(\beta\) and the beta-prime distribution with parameters \(\beta\) and \(\sigma\), respectively.

\subsection{Preparatory lemmas}\label{subsec.lemmas}

The proofs of Theorem \ref{Theorem1} and Theorem \ref{thm.beta.prime.sigma} follow the method introduced in \cite{DFMcD} and exploited in \cite{Piv}. 
We thus define, for every $x\in \R^n$, the functions
\[
q(x) := \inf\{\Pr{X\in H} : H \text{ is a halfspace containing } x\},
\]
when \(X\sim\nu_\beta\), and
\[
\tilde{q}(x):=\inf \{ \Pr{X \in H}: H \text{ is a halfspace containing } x\},
\]
when \(X\sim\tilde\nu_{\beta,\sigma}\). The following lemma implies a way to compute $q(x)$ and $\tilde{q}(x)$ in terms of the Euclidean norm of the point $x\in\R^n$.

\begin{lemma}\label{lem.q=B}
Let $H$ be a halfspace at distance $d\ge 0$ from the origin. Then,
\begin{enumerate}[label={\emph{(\alph*)}}]
\item$\Pr{X\in H} = \cdf(d)$, when \(X\sim\nu_\beta\),
\item $\Pr{X\in H} = \tilde{\cdf}(d)$, when \(X\sim\tilde\nu_{\beta,\sigma}\).
\end{enumerate}
\end{lemma}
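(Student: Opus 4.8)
The plan is to reduce the $n$-dimensional probability $\Pr{X \in H}$ to the one-dimensional marginal by exploiting the rotational invariance of both densities. Both $p_{n,\beta}$ and $\tilde p_{n,\beta,\sigma}$ depend on $x$ only through $\norm{x}$, hence are invariant under rotations of $\R^n$ about the origin. A halfspace $H$ at distance $d \ge 0$ from the origin can, after applying a suitable rotation, be brought to the standard form $H = \{x \in \R^n : \langle x, e_1\rangle \ge d\}$ (or $\langle x,e_1\rangle \le -d$, which gives the same probability by the symmetry $x \mapsto -x$ of the densities), where $e_1$ is the first coordinate vector. Since the distribution of $X$ is unchanged under this rotation, $\Pr{X \in H}$ equals the probability that the first coordinate $X_1$ of $X$ exceeds $d$.

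Next I would identify the law of $X_1$. By definition, the one-dimensional marginal density of $\nu_\beta$ is $f_\beta(t) = \alpha_{n,\beta}(1-t^2)^{\beta + \frac{n-1}{2}}$ on $[-1,1]$, and that of $\tilde\nu_{\beta,\sigma}$ is $\tilde f_{\beta,\sigma}(t) = \tilde\alpha_{n,\beta,\sigma}(1+t^2/\sigma^2)^{-\beta + \frac{n-1}{2}}$ on $\R$ — these are exactly the marginals recorded in Subsection \ref{ssec.beta.def}. (If one wants to be self-contained one can re-derive them: integrating $c_{n,\beta}(1-t^2-\norm{y}^2)^\beta$ over $y \in \sqrt{1-t^2}\,B_2^{n-1}$ via the substitution $y = \sqrt{1-t^2}\,u$ pulls out a factor $(1-t^2)^{(n-1)/2}$ and the remaining integral is $c_{n-1,\beta}^{-1}$, giving the ratio $\alpha_{n,\beta} = c_{n,\beta}/c_{n-1,\beta}$; the beta-prime case is analogous with the substitution $y = \sqrt{\sigma^2 + t^2}\,u$.) Therefore $\Pr{X_1 \ge d} = \int_d^1 f_\beta(t)\,\de t = \cdf(d)$ in the beta case, and $\Pr{X_1 \ge d} = \int_d^\infty \tilde f_{\beta,\sigma}(t)\,\de t = \tilde\cdf(d)$ in the beta-prime case, which are precisely the claimed identities.

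The only genuine point requiring care — and the main obstacle, such as it is — is the direction in which the halfspace points: $\Pr{X \in H}$ should not depend on whether $H$ is the "outer" halfspace $\{\langle x,e_1\rangle \ge d\}$ or its complement reflected, and this is exactly where the central symmetry $p_{n,\beta}(x) = p_{n,\beta}(-x)$, $\tilde p_{n,\beta,\sigma}(x) = \tilde p_{n,\beta,\sigma}(-x)$ is used. Once this is observed, every half-space at distance $d$ from the origin is equivalent, for the purpose of computing $\Pr{X \in H}$, to $\{x : \langle x, e_1\rangle \ge d\}$, and the computation above applies verbatim. No estimation is needed; the lemma is a change-of-variables and symmetry statement, and the asymptotic bounds on $\cdf$ and $\tilde\cdf$ from Lemma \ref{lem:B.bounds} and Lemma \ref{lem.tail.Ftilda} are what will do the real work later when $q(x) = \cdf(\norm{x})$ and $\tilde q(x) = \tilde\cdf(\norm{x})$ are fed into the Dyer–Füredi–McDiarmid machinery.
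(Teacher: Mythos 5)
Your proposal is correct and takes essentially the same route as the paper: reduce by rotational invariance to the halfspace $\{x : x_1 \geq d\}$, then integrate out the remaining $n-1$ coordinates to obtain the one-dimensional marginal $f_\beta$ (resp. $\tilde f_{\beta,\sigma}$), which gives $\cdf(d)$ (resp. $\tilde\cdf(d)$) directly. The paper carries out the Fubini-and-substitution computation inline rather than citing the marginal density already recorded in Subsection~\ref{ssec.beta.def}, but this is only a matter of presentation; your additional remark about central symmetry is harmless but not strictly needed, since for $d>0$ all halfspaces at distance $d$ from the origin are already rotations of one another (the origin lies in the complement), and for $d=0$ both choices give probability $\tfrac12=\cdf(0)$.
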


\begin{proof}
		We prove the lemma only for the case (a), since (b) is analogous. By rotational invariance of the measure $\nu_\beta$, we may assume that $H = \{x=(x_1,\ldots,x_n) \in \R^n : x_1\geq d\}$. We write
		\begin{equation*}
			\begin{split}
				\Pr{X\in H} &= \nu_\beta(H) = \int_H p_{n,\beta}(x)\de x = c_{n,\beta}\int_H (1-\norm{x}^2)^\beta\de x\\
				&= c_{n,\beta} \int_d^1\,\int_{B_2^{n-1}} (1-\norm{x}^2)^\beta\de (x_2,\ldots,x_n)\de x_1\\
				&= c_{n,\beta} \int_d^1\,\int_{B_2^{n-1}} (1-t^2)^\beta\left(1-\frac{\norm{y}_2^2}{1-t^2}\right)^\beta\de y\de t\\
				&= c_{n,\beta} \int_d^1 (1-t^2)^\beta \int_{B_2^{n-1}} (1-\norm{z}_2^2)^\beta(1-t^2)^\frac{n-1}{2}\de z\de t\\
				&= \alpha_{n,\beta} \int_d^1 (1-t^2)^{\beta + \frac{n-1}{2}} \int_{B_2^{n-1}} p_{n-1,\beta}(z)\de z\de t\\
				&= \int_d^1 f_{\beta}(t)\de t =\cdf(d),
			\end{split}
		\end{equation*}
		which concludes the proof.
\end{proof}

\begin{corollary}\label{corol.q=B}
For every $x\in \R^n$,
\begin{enumerate}[label={\emph{(\alph*)}}]
\item $q(x) = \cdf(\norm{x})$,
 \item $\tilde{q}(x) = \tilde{\cdf}(\norm{x})$.
 \end{enumerate}
\end{corollary}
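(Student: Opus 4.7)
The plan is to combine Lemma \ref{lem.q=B} with the observation that $\mathrm{F}$ and $\tilde{\mathrm{F}}$ are strictly decreasing, and then reduce the infimum over halfspaces to an infimum over distances. I will only write the argument for part (a), since (b) is identical after replacing $\nu_\beta$ by $\tilde{\nu}_{\beta,\sigma}$ and $\mathrm{F}$ by $\tilde{\mathrm{F}}$.

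Fix $x \in \R^n$ and let $H$ be any halfspace containing $x$. Denote by $d(H)$ the distance from the origin to $H$. By Lemma \ref{lem.q=B}\,(a), $\Pr{X\in H} = \cdf(d(H))$, so this probability depends on $H$ only through $d(H)$. Hence
\[
q(x) \;=\; \inf\bigl\{\cdf(d(H)) : H \text{ is a halfspace with } x \in H\bigr\}.
\]
Since every halfspace containing $x$ has its bounding hyperplane at distance at most $\norm{x}$ from the origin, we have $d(H) \in [0,\norm{x}]$, and conversely every value $d \in [0,\norm{x}]$ is realised (for $d = \norm{x}$, take the halfspace whose boundary passes through $x$ and is orthogonal to $x$; for $d<\norm{x}$, take any supporting halfspace of the ball of radius $d$ that also contains $x$).

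From the definition $\cdf(d) = \int_d^1 f_\beta(t)\de t$ with $f_\beta \geq 0$, the function $\cdf$ is non-increasing on $[0,1]$. Therefore the infimum of $\cdf(d(H))$ is attained by taking $d(H)$ as large as possible, namely $d(H) = \norm{x}$, yielding
\[
q(x) \;=\; \cdf(\norm{x}) .
\]
The same chain of reasoning, using Lemma \ref{lem.q=B}\,(b) and the fact that $\tilde\cdf(d)=\int_d^\infty \tilde f_{\beta,\sigma}(t)\de t$ is non-increasing on $[0,\infty)$, gives $\tilde q(x) = \tilde\cdf(\norm{x})$. There is no real obstacle here: the corollary is essentially a one-line consequence of the monotonicity of $\cdf$ and $\tilde\cdf$, the only subtlety being to observe that the set $\{d(H):H\ni x\}$ equals $[0,\norm{x}]$ so that the supremum distance $\norm{x}$ is actually attained.
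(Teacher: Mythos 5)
Your strategy is essentially the paper's: the tangent halfspace at $x$ gives $q(x)\le\cdf(\norm{x})$ via Lemma \ref{lem.q=B}, and the monotonicity of $\cdf$ together with $d(H)\le\norm{x}$ gives the reverse inequality. There is, however, one step that is not literally correct. You assert that $\Pr{X\in H}=\cdf(d(H))$ for \emph{every} halfspace $H$ containing $x$, where $d(H)$ is the distance from the origin to $H$, and conclude $q(x)=\inf\{\cdf(d(H)): x\in H\}$. This identity fails when the origin lies in the interior of $H$: writing $H=\{y:\langle u,y\rangle\ge d\}$ with $d<0$, one has $d(H)=0$, so your formula would give $\cdf(0)=1/2$, whereas in fact $\Pr{X\in H}=1-\cdf(\abs{d})>1/2$. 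Lemma \ref{lem.q=B}, as proved (after rotation it takes $H=\{x_1\ge d\}$ with $d\ge0$), covers only halfspaces not containing the origin in their interior; this is exactly why the paper's proof splits off the case $d=0$ and uses there only the inequality $\Pr{X\in H}\ge 1/2\ge\cdf(\norm{x})$ rather than the lemma. A related small slip is your justification ``every halfspace containing $x$ has its bounding hyperplane at distance at most $\norm{x}$'': that is false for halfspaces containing the origin deep in their interior, although the statement you actually need, $d(H)\le\norm{x}$, is true simply because $x\in H$.

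The repair is immediate and leaves your conclusion intact: for halfspaces whose interior contains the origin, $\Pr{X\in H}\ge 1/2=\cdf(0)\ge\cdf(\norm{x})$; for all other halfspaces containing $x$, your argument applies verbatim, since then $\Pr{X\in H}=\cdf(d(H))\ge\cdf(\norm{x})$ because $d(H)\le\norm{x}$ and $\cdf$ is non-increasing. Combined with the tangent halfspace (and the trivial remark $q(0)=1/2=\cdf(0)$ for $x=0$, where ``orthogonal to $x$'' degenerates), this yields $q(x)=\cdf(\norm{x})$, and the same patch works for part (b) with $\tilde{\cdf}$. So the gap is minor, but as written the reduction of the infimum to $\inf_{d\in[0,\norm{x}]}\cdf(d)$ is not justified for all competing halfspaces.
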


\begin{proof}
As before, we discuss only the case (a). Note that $q(0) = 1/2 =\cdf(0)$. If $x\neq 0$, let $H(x)$ be the halfspace bounded by the tangent hyperplane to $\norm{x}B_2^n$ at $x$, not containing $0$. Then, by Lemma \ref{lem.q=B} (a), we have
		\[
		\cdf(\norm{x}) = \Pr{X\in H(x)} \geq q(x).
		\]
		Conversely, let $H$ be a halfspace at distance $d$ from the origin, such that $x\in H$. If $d=0$, then, $\Pr{X\in H} \geq 1/2 \geq \mathrm{F}(\norm{x})$. If $d>0$, then, again by Lemma \ref{lem.q=B} (a), we have $\Pr{X\in H} =\cdf(d) \geq \cdf(\norm{x})$, since $d\leq \norm{x}$. It follows that $q(x)\geq \cdf(\norm{x})$.
\end{proof}

Using Corollary \ref{corol.q=B}, we can relate the probability content of the random polytopes $P_{N,n}^\beta$ and $\tilde{P}_{N,n}^{\beta,\sigma}$ to the distribution functions $\cdf$ and $\tilde{\cdf}$, respectively. In particular, we upper bound the expected volume of $P_{N,n}^\beta$ in terms of $\cdf$ (and similarly for $\tilde{P}_{N,n}^{\beta,\sigma}$).

\begin{lemma}\label{lemma.ineq}
		Let $A$ be a bounded, measurable subset of $\R^n$.
		\begin{enumerate}[label={\emph{(\alph*)}}]
		\item In the beta model,
		\[
		\Pr{A\subseteq P_{N,n}^\beta}\ls\frac{\Ex \V{P_{N,n}^\beta\cap A}}{\V{A}}\leq N\sup_{x\in A}\cdf(\norm{x}).
		\]
		\item In the beta-prime model,
		\[  
            \mu(A)\Pr{A\subseteq \tilde{P}_{N,n}^{\beta,\sigma}}\ls\Ex  \mu(\tilde{P}_{N,n}^{\beta,\sigma} \cap A) \leq N \mu(A) \sup_{x\in A}\tilde{\cdf}(\norm{x}),
        \]
        where $\mu$ is any isotropic log-concave probability measure on $\R^n$.
		\end{enumerate}
	\end{lemma}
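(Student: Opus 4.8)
The plan is to prove both parts simultaneously since they have the same structure; I will write the argument for part (b), as part (a) follows by taking $\mu$ to be the normalized Lebesgue measure on $A$ (or rather, part (a) is the cleaner special case with $\mu(A)$ cancelling). The two inequalities are handled separately. For the \emph{upper bound}, the key observation is that a point $x$ fails to lie in $\tilde P_{N,n}^{\beta,\sigma}$ precisely when $x$ can be separated from all of $X_1,\ldots,X_N$ by some halfspace, i.e.\ when there is a halfspace $H$ with $x\in H$ but $X_i\notin H$ for all $i$. Hence
\[
\Pr{x\notin \tilde P_{N,n}^{\beta,\sigma}} \ls \Pr{\exists\, H \ni x : X_i \notin H \ \forall i}.
\]
I would then take a minimal enclosing halfspace (or pass to a countable dense family of halfspaces and use a union bound / continuity argument) to reduce to: there exists a halfspace $H$ with $x\in H$ such that $\Pr{X_1 \in H}$ is as small as possible, namely $\tilde q(x)$. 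A cleaner route avoiding the supremum-over-uncountably-many-halfspaces subtlety is the following pointwise bound: for \emph{every} fixed halfspace $H\ni x$, $\Pr{x\notin \tilde P_{N,n}^{\beta,\sigma}} \le \Pr{X_1,\ldots,X_N \notin H} = (1-\Pr{X_1\in H})^N \le N\Pr{X_1\in H}$ is \emph{false} in that direction; instead one bounds $\Pr{x\notin \tilde P} = \Pr{\bigcup_H \{X_i\notin H\ \forall i\}}$. The standard trick (from \cite{DFMcD}) is: if $x\notin\tilde P$ then there is an $i$ such that the halfspace through $X_i$ "witnessing" the separation has $X_j\notin H$ for all $j$, and summing over $i$ gives $\Pr{x\notin\tilde P}\le \sum_{i=1}^N \Pr{X_i \text{ is a vertex "facing" }x}\le N\sup_{H\ni x}\Pr{X_1\in H^c \text{-side}}$... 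The honest clean statement is: $\Pr{x \notin \tilde P_{N,n}^{\beta,\sigma}} \le N\,\tilde q(x)$ — this is proved by noting that if $x\notin\tilde P$, there is a halfspace $H$ containing $x$ and missing all $X_i$; among all such, pick one whose boundary passes through some $X_{i_0}$ after translating; then $X_{i_0}\in \partial H'$ for the "opposite" halfspace $H'=\R^n\setminus\mathrm{int}(H)$, all other $X_j\in H'$, and $\Pr{X_1\in H'}\ge 1-\tilde q(x)$... this needs care, but ultimately yields $\Pr{x\notin\tilde P}\le N\tilde q(x) = N\tilde\cdf(\norm x)$ by Corollary~\ref{corol.q=B}. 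Integrating against $\mu$ over $A$,
\[
\Ex\mu(\tilde P_{N,n}^{\beta,\sigma}\cap A) = \int_A \Pr{x\in\tilde P_{N,n}^{\beta,\sigma}}\de\mu(x) \ge \mu(A) - N\int_A \tilde\cdf(\norm x)\de\mu(x),
\]
wait — that is the wrong direction; for the upper bound I instead use $\mu(\tilde P\cap A)\le \mu(A)$ trivially, but the claimed upper bound $N\mu(A)\sup_{x\in A}\tilde\cdf(\norm x)$ suggests bounding $\Ex\mu(\tilde P\cap A)$ from above. Here the correct tool is the complementary idea: $\Ex\mu(\tilde P_{N,n}^{\beta,\sigma}\cap A)\le \mu(A)\cdot\Pr{A\subseteq \tilde P}+ \Ex\mu((\tilde P\cap A)\setminus A)$ — no. Rather, one uses that $\mu(\tilde P\cap A) = \int_A \indic{x\in\tilde P}\de\mu$ and bounds $\indic{x\in\tilde P}$ directly: $\Pr{x\in\tilde P_{N,n}^{\beta,\sigma}}$ — hmm, this is close to $1$ for $x$ deep inside. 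The resolution is that the upper bound comes from a \emph{different} event: $\{A\subseteq\tilde P\}$ requires \emph{all} of $A$ to be covered, so $\Pr{A\subseteq\tilde P}\le \Pr{x_0\in\tilde P}$ for the "worst" $x_0\in A$, and one shows $\Pr{x_0\in\tilde P}\le$ (expected number of halfspaces through the $X_i$ that contain $x_0$) which is $\le N\sup\tilde\cdf(\norm x)$ via an inclusion argument. Meanwhile $\Ex\mu(\tilde P\cap A)\ge \mu(A)\Pr{A\subseteq\tilde P}$ is immediate since $\{A\subseteq\tilde P\}$ implies $\mu(\tilde P\cap A)=\mu(A)$.

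Concretely, the three inequalities I must establish are: (i) $\mu(A)\Pr{A\subseteq\tilde P_{N,n}^{\beta,\sigma}}\le\Ex\mu(\tilde P_{N,n}^{\beta,\sigma}\cap A)$, which follows from $\mu(\tilde P\cap A)\ge \mu(A)\indic{A\subseteq\tilde P}$ and taking expectations; (ii) $\Ex\mu(\tilde P_{N,n}^{\beta,\sigma}\cap A)\le N\mu(A)\sup_{x\in A}\tilde\cdf(\norm x)$; and the analogues for part (a) with $\mu$ replaced by normalized volume. For (ii), I would write $\Ex\mu(\tilde P\cap A) = \Ex\int_A\indic{x\in\tilde P}\de\mu(x) = \int_A\Pr{x\in\tilde P}\de\mu(x)$ by Fubini, so it suffices to show $\Pr{x\in\tilde P_{N,n}^{\beta,\sigma}}\le N\tilde\cdf(\norm x)$ for each $x$. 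This last bound is the crux and is exactly the Dyer--Füredi--McDiarmid estimate: $x\in\mathrm{conv}\{X_1,\ldots,X_N\}$ implies (by Carathéodory / Steinitz) that $x$ lies in the convex hull, hence for the specific halfspace $H_x$ tangent to $\norm x B_2^n$ at $x$ and not containing $0$, $x$ being in the convex hull forces at least one $X_i\in H_x$; by a union bound $\Pr{x\in\tilde P}\le\Pr{\exists i: X_i\in H_x}\le N\Pr{X_1\in H_x}=N\tilde\cdf(\norm x)$, using Lemma~\ref{lem.q=B}(b). Then $\sup_{x\in A}$ gives the result. Taking $\mu$ = normalized volume on $A$ handles (a).

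\textbf{Main obstacle.} The only genuinely delicate point is the implication "$x\in\mathrm{conv}\{X_1,\ldots,X_N\}\Rightarrow \exists i,\ X_i\in H_x$", where $H_x = \{y:\langle y,x\rangle\ge\norm x^2\}$ is the halfspace supporting $\norm x B_2^n$ at $x$: indeed if $x$ is a convex combination $\sum\lambda_i X_i$ with $\lambda_i\ge0$, $\sum\lambda_i=1$, then $\norm x^2 = \langle x,x\rangle = \sum\lambda_i\langle X_i,x\rangle$, so $\max_i\langle X_i,x\rangle\ge\norm x^2$, i.e.\ some $X_i\in H_x$; this is a one-line argument once phrased correctly. (For $x=0$ the bound $\Pr{0\in\tilde P}\le 1 = N\cdot\tfrac1N\le N\tilde\cdf(0)$ is trivial since $\tilde\cdf(0)=1/2$ and $N\ge1$.) The union bound is lossy but that is exactly the slack the theorem allows, so no sharper argument is needed here — the sharpness is recovered later when $N\tilde\cdf$ is controlled via the estimates of Lemma~\ref{lem.tail.Ftilda}. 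The lower bound (i) is routine. Thus the proof is short; I would present (a) in full and note (b) is "analogous, using Corollary~\ref{corol.q=B}(b) and Fubini, together with the fact that $\mu$ is a probability measure."
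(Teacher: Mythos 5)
Your final argument coincides with the paper's proof: the lower bound follows from Markov's inequality applied to the pointwise bound $V_n(A)\indic{\{A\subseteq P_{N,n}^\beta\}}\le V_n(P_{N,n}^\beta\cap A)$, and the upper bound follows by Fubini together with the pointwise estimate $\Pr{x\in P_{N,n}^\beta}\le N\,\cdf(\norm{x})$, which you obtain by a union bound over $\{X_i\in H\}$ for a halfspace $H$ containing $x$ (you pick the tangent one $H_x$ explicitly and invoke Lemma~\ref{lem.q=B}, while the paper bounds by $Nq(x)$ and then applies Corollary~\ref{corol.q=B}; these are the same estimate). The many false starts in the middle of your write-up obscure this, but the paragraph beginning ``Concretely, the three inequalities I must establish are\ldots'' is a correct proof that matches the paper's essentially verbatim.
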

	
	\begin{proof}
	(a) First, note that, for any $x\in P_{N,n}^\beta=\mathrm{conv}\{X_1,\ldots,X_N\}$ and any halfspace $H$ containing $x$, there must be some $X_i \in H$. This implies that
	\[
	\{x\in P_{N,n}^\beta\} \subseteq \bigcup_{i=1}^N \{X_i \in H\}.
	\]
	Since the previous inclusion holds for any halfspace $H$ containing $x$, by a union bound and \Cref{corol.q=B} (a), we get that 
	\begin{align*}
	\mathbb{P}(x\in P_{N,n}^\beta)\ls N q(x) = N \cdf(\norm{x}). 
	\end{align*}
	Now, using the latter estimate,
	\[
	\Ex \V{P_{N,n}^\beta\cap A} = \Ex \int_A \indic{P_{N,n}^\beta}(x)\de x = \int_A \Pr{x\in P_{N,n}^\beta}\de x \le N\V{A}\sup_{x\in A}\cdf(\norm{x}).
	\] 
This proves the upper bound. On the other hand, since the event $ \{ A \subseteq P_{N,n}^\beta \} $ implies $ \{ \V{A} \le \V{P_{N,n}^\beta\cap A} \} $, Markov's inequality gives
	\begin{align*}
	\V{A}\Pr{A\subseteq P_{N,n}^\beta} \le \Ex \V{P_{N,n}^\beta\cap A},
	\end{align*}
	completing the proof.
  
  (b) The proof follows along the same line as (a), using now Corollary \ref{corol.q=B} (b) instead of (a).
  As above, for any halfspace $H$ and any point $x\in H$, we have
 	\[
 	\{x\in \tilde{P}_{N,n}^{\beta,\sigma}\} \subseteq \bigcup_{i=1}^N \{X_i \in H\}.
 	\]
  Again, by a union bound and \Cref{corol.q=B} (b), we get that 
 	\begin{align*}
 	\mathbb{P}(x\in \tilde{P}_{N,n}^{\beta,\sigma})\ls N \tilde{q}(x) = N \tilde{\cdf}(\norm{x}). 
 	\end{align*}
 	Using this estimate, we have
 	\[
 	\Ex \mu(\tilde{P}_{N,n}^{\beta,\sigma}\cap A) = \Ex \int_A \indic{\tilde{P}_{N,n}^{\beta,\sigma}}(x) \,\mu(\de x) = \int_A \Pr{x\in \tilde{P}_{N,n}^{\beta,\sigma}} \, \mu(\de x) \le N\mu(A)\sup_{x\in A}\tilde{\cdf}(\norm{x}),
 	\] 
  which proves the upper bound. Finally, by Markov's inequality, we get the lower bound
 	\begin{align*}
 	\mu(A)\Pr{A\subseteq \tilde{P}_{N,n}^{\beta,\sigma}} \le \Ex \mu(\tilde{P}_{N,n}^{\beta,\sigma}\cap A),
 	\end{align*}
 	finishing the proof.
	\end{proof}

Next, we reproduce an analogous ``ball inclusion'' argument as in \cite{DFMcD} in our setting. 

\begin{lemma}\label{lem.prob}
		\begin{enumerate}[label={\emph{(\alph*)}}]
		\item For any $R\in(0,1)$, the inclusion $RB_2^n\subseteq P_{N,n}^\beta$ holds with probability greater than $1-2\binom{N}{n}(1-\cdf(R))^{N-n}$. 
		\item For any $R>0$, the inclusion $RB_2^n\subseteq \tilde{P}_{N,n}^{\beta,\sigma}$ holds with probability greater than $1-2\binom{N}{n}(1-\tilde{\cdf}(R))^{N-n}$. 
		\end{enumerate}
	\end{lemma}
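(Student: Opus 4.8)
The plan is to adapt the classical Dyer--Füredi--McDiarmid argument: a random polytope contains the ball $RB_2^n$ as soon as every halfspace tangent to $RB_2^n$ contains at least one of the points $X_i$, and the probability that a \emph{fixed} tangent halfspace is avoided by all $N$ points is $(1-\cdf(R))^N$ in the beta case (respectively $(1-\tilde\cdf(R))^N$ in the beta-prime case) by \Cref{lem.q=B}. The subtlety is that there are infinitely many such tangent halfspaces, so one cannot directly apply a union bound; the standard fix is to note that if $RB_2^n\not\subseteq P_{N,n}^\beta$, then there is a tangent halfspace to $RB_2^n$ containing no $X_i$, and in that case one can rotate this halfspace until its bounding hyperplane passes through $n$ of the points $X_i$ (or rather, one shows that the "bad" event is governed by finitely many configurations of $n$ points). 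More precisely, if the ball is not contained in the polytope, there is a point $y\in RB_2^n$ separated from $\mathrm{conv}\{X_1,\ldots,X_N\}$ by a hyperplane; pushing this hyperplane towards the polytope, by Carathéodory-type reasoning it can be taken to contain $n$ affinely independent points among the $X_i$, with all remaining $N-n$ points strictly on one side. This is the $\binom{N}{n}$ in the bound: we choose which $n$ points span the critical hyperplane.

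First I would fix a subset $I\subseteq[N]$ with $|I|=n$ and condition on $(X_i)_{i\in I}$. Generically these span an affine hyperplane $H_I$, which divides $\R^n$ into two closed halfspaces $H_I^+$ and $H_I^-$. For the event that $RB_2^n$ fails to be contained in the polytope \emph{because of this hyperplane}, two things must happen: the hyperplane $H_I$ must actually separate a point of $RB_2^n$ from the remaining points, which forces $H_I$ to be at distance $\leq R$ from the origin, hence each of the two tangent halfspaces of $RB_2^n$ on the far side has $\nu_\beta$-measure at least $\cdf(R)$ (monotonicity of $\cdf$ in the distance, via \Cref{lem.q=B}); and all of the remaining $N-n$ points must lie in the complementary open halfspace. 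Since that complementary halfspace is contained in a halfspace at distance $\geq R$ from the origin — wait, more carefully: one of $H_I^+,H_I^-$ contains a halfspace tangent to $RB_2^n$, hence has measure $\geq\cdf(R)$, so the \emph{other} side has measure $\leq 1-\cdf(R)$. The remaining $N-n$ points falling in that other side therefore has probability at most $(1-\cdf(R))^{N-n}$, uniformly over $(X_i)_{i\in I}$. There are $\binom{N}{n}$ choices of $I$, and a factor of $2$ accounts for the choice of which of the two halfspaces determined by $H_I$ is the relevant one. A union bound over all these events yields
\[
\Pr{RB_2^n\not\subseteq P_{N,n}^\beta}\leq 2\binom{N}{n}(1-\cdf(R))^{N-n},
\]
which is exactly the claim in (a). Part (b) is verbatim the same argument with $\nu_\beta$ replaced by $\tilde\nu_{\beta,\sigma}$, $\cdf$ by $\tilde\cdf$, and using \Cref{lem.q=B}(b); here $R>0$ is unrestricted because $\tilde\nu_{\beta,\sigma}$ has full support.

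The main obstacle is the combinatorial/geometric reduction from "infinitely many tangent halfspaces" to "$\binom{N}{n}$ events indexed by $n$-subsets of the points", i.e. making precise the claim that a separating hyperplane can be moved to pass through exactly $n$ of the sample points with all others strictly on one side — this is a Carathéodory/supporting-hyperplane argument and is where one must be slightly careful about degenerate configurations (which occur with probability zero, since the distributions are absolutely continuous). Once that reduction is in place, the probability estimate is immediate from \Cref{lem.q=B} together with the monotonicity of $\cdf$ (resp. $\tilde\cdf$) as a function of the distance to the origin. The bookkeeping factor $2$ and the exponent $N-n$ (rather than $N$) both come naturally out of this reduction: $n$ points are "used up" spanning the hyperplane and the remaining $N-n$ are the ones that must all avoid the far halfspace.
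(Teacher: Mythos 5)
Your proposal is correct and matches the paper's proof in essentially all respects: the reduction to $\binom{N}{n}$ events indexed by $n$-subsets via the facet structure of the polytope, the use of \Cref{lem.q=B} (monotonicity of $\cdf$ in the distance) to show that conditionally on the chosen $n$ points the remaining $N-n$ must all fall in a halfspace of measure at most $1-\cdf(R)$, the factor $2$ from the two sides of the spanning hyperplane, and the union bound. The paper simply packages the same idea more formally (defining the events $E_J$ explicitly and carrying out the conditioning step); your self-corrected version of the measure estimate is the right one.
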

	
	\begin{proof}
	Let us start with part (a). Let $J\subseteq \{1,\ldots,N\}$ with $|J|=n$. With probability equal to one, the set $\{X_j\}_{j\in J}$ is affinely independent. Let $H_J$ be the affine hyperplane defined by the affine hull of $\{X_j\}_{j\in J}$ and $H_J^+, H_J^-$ be the corresponding closed halfspaces, determined by $H_J$. Moreover, let $X$ be an additional independent beta-distributed random point and let $E_J$ be the event, that, either $P_{N,n}^\beta\subseteq H_J^+$ and $\mathbb{P}(X\notin H) |_{H=H_J^+} \gr\cdf(R)$, or $P_{N,n}^\beta\subseteq H_J^-$ and $\mathbb{P}(X\notin H) |_{H=H_J^-}\gr\cdf(R)$. Note that here, and in the following, $\mathbb{P}(X\notin H) |_{H=G}$ denotes the evaluation of the map $H \mapsto \mathbb{P}(X\notin H)$ for the halfspace $G\subset \mathbb{R}^n$.
	
	Suppose that $RB_2^n\nsubseteq P_{N,n}^\beta$, so there exists some $x_0\in RB_2^n\setminus P_{N,n}^\beta$. Then, there exists some $J\subseteq \{1,\ldots,N\}$ with $\abs{J}=n$ such that either $P_{N,n}^\beta\subseteq H_J^+$ and $x_0\in H_J^-$ or $P_{N,n}^\beta\subseteq H_J^-$ and $x_0\in H_J^+$. Note that we have $\mathbb{P}(X\notin H)|_{H=H_J^+}\gr q(x_0)\gr\cdf(R)$, or $\mathbb{P}(X\notin H)|_{H=H_J^-}\gr q(x_0)\gr\cdf(R)$ respectively, since $\norm{x_0}\ls R$. It follows that
	\[
	\{RB_2^n\nsubseteq P_{N,n}^\beta\} \subseteq \bigcup_{\substack{J\subseteq [N] \\ \abs{J}=n}} E_J.
	\]
Clearly, using the union bound,
\[
\mathbb{P}(RB_2^n\nsubseteq P_{N,n}^\beta) \ls \binom{N}{n}\mathbb{P}(E_{[n]}).
\]
Next, note that $\mathbb{P}(X\notin H)|_{H=H_{[n]}^+}\gr \cdf(R)$ implies $\mathbb{P}(X\in H)|_{H=H_{[n]}^+} \ls 1-\cdf(R)$, and similarly for $H_{[n]}^-$. It follows that $\Pr{E_{[n]}\mid X_1,\ldots,X_n}\ls 2(1-\cdf(R))^{N-n}$. Finally, we get that $\Pr{E_{[n]}}=\Exval{\Pr{E_{[n]}\mid X_1,\ldots, X_n}}\ls 2(1-\cdf(R))^{N-n}$, and, hence,
\[
\mathbb{P}(RB_2^n\nsubseteq P_{N,n}^\beta) \ls 2\binom{N}{n}(1-\cdf(R))^{N-n},
\]
proving the statement of the lemma. The proof of part (b) is a word-by-word repetition of the proof of (a), where now $\tilde{\cdf}$ plays the role of $\cdf$.
\end{proof}

Finally, we provide an essential lemma for the proofs of Theorem \ref{Theorem1} and Theorem \ref{thm.beta.prime.sigma}.

\begin{lemma} \label{lemma.beta.prime.productNF} Let $ \epsilon > 0 $ be fixed.
  \begin{enumerate}[label={\emph{(\alph*)}}]
		\item In the beta model,
		  \begin{equation*}
        \lim_{n\to\infty} \frac{\Ex \V{P_{N,n}^\beta}}{\vball_n} = 
        \begin{cases} 
        0 & \text{ if } N \cdf \big( \sqrt{ 1 - n^{ - ( 1 - \epsilon ) } } \big) \to 0 \\
        1 & \text{ if } N \cdf \big( \sqrt{ 1 - n^{ - ( 1 + \epsilon ) } } \big) - n \log N \to \infty .
        \end{cases}
      \end{equation*}
		\item In the beta-prime model,
      \begin{equation*}
        \lim_{n\to\infty} \Ex \mu_n(\tilde{P}_{N,n}^{\beta,\sigma}) =
        \begin{cases} 
        0 &\text{ if } N \tilde{ \cdf } \big( ( 1 - \epsilon ) \sqrt{ n } \big) \to 0 \\
        1 &\text{ if } N \tilde{ \cdf } \big( ( 1 + \epsilon ) \sqrt{ n } \big) - n \log N \to \infty .
        \end{cases}
      \end{equation*}
  \end{enumerate}
\end{lemma}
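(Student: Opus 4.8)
The plan is to combine the two-sided estimates from \Cref{lemma.ineq} and \Cref{lem.prob} with a suitable choice of the radius $R$, and then to translate the hypotheses on $N\cdf(\cdot)$ (resp.\ $N\tilde\cdf(\cdot)$) into the required volume/measure asymptotics. For part (a), the lower statement (convergence to $0$) is the easy direction: applying \Cref{lemma.ineq}(a) with $A=B_2^n$ and using that $\cdf$ is decreasing together with the monotonicity of $\V{P_{N,n}^\beta\cap A}$ under shrinking $A$, one gets for any $\epsilon'\in(0,\epsilon)$ that $\Ex\V{P_{N,n}^\beta}/\vball_n$ is bounded, up to the contribution of the thin shell $B_2^n\setminus \sqrt{1-n^{-(1-\epsilon')}}B_2^n$, by $N\cdf\big(\sqrt{1-n^{-(1-\epsilon')}}\big)$; since the volume ratio of that shell is $(1-n^{-(1-\epsilon')})^{n/2}\to 0$, and the $N\cdf$ term tends to $0$ by hypothesis (a minor book-keeping with $\epsilon'$ versus $\epsilon$ handles the two exponents), the whole quantity tends to $0$.

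For the upper statement (convergence to $1$) in part (a), I would use \Cref{lem.prob}(a) with $R=R_n\coloneqq \sqrt{1-n^{-(1+\epsilon)}}$, so that
\[
\Pr{R_nB_2^n\subseteq P_{N,n}^\beta}\ge 1-2\binom{N}{n}\bigl(1-\cdf(R_n)\bigr)^{N-n}.
\]
Bounding $\binom{N}{n}\le N^n=\exp(n\log N)$ and $\bigl(1-\cdf(R_n)\bigr)^{N-n}\le \exp\bigl(-(N-n)\cdf(R_n)\bigr)$, the error term is at most $2\exp\bigl(n\log N-(N-n)\cdf(R_n)\bigr)$, which tends to $0$ precisely because $N\cdf(R_n)-n\log N\to\infty$ (note this hypothesis also forces $N\to\infty$ fast enough that $(N-n)\cdf(R_n)\sim N\cdf(R_n)$, so the substitution of $N-n$ for $N$ is harmless). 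On the event $\{R_nB_2^n\subseteq P_{N,n}^\beta\}$ one has $\V{P_{N,n}^\beta}\ge R_n^n\vball_n=(1-n^{-(1+\epsilon)})^{n/2}\vball_n\to\vball_n$; since also $\V{P_{N,n}^\beta}\le\vball_n$ always, taking expectations and using the probability bound gives $\Ex\V{P_{N,n}^\beta}/\vball_n\to 1$.

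Part (b) is structurally identical, with $\tilde\cdf$ in place of $\cdf$ and with the normalized volume replaced by the isotropic log-concave measure $\mu_n$; the only genuinely new ingredient is the thin-shell concentration \Cref{thm:concentration}, which plays the role that the volume of a Euclidean shell played in part (a). For the convergence to $0$: by \Cref{lemma.ineq}(b) with $A=B_2^n$, $\Ex\mu_n(\tilde P_{N,n}^{\beta,\sigma})\le \mu_n\bigl((1-\epsilon)\sqrt n\,B_2^n\bigr)^c$-type term plus $N\tilde\cdf((1-\epsilon)\sqrt n)$; more precisely split $\mathbb R^n$ into $(1-\epsilon)\sqrt n\,B_2^n$ and its complement, estimate the polytope's $\mu_n$-content inside the ball by $N\sup_{\norm x\le(1-\epsilon)\sqrt n}\tilde\cdf(\norm x)=N\tilde\cdf((1-\epsilon)\sqrt n)\to 0$, and bound the contribution of the complement by $\mu_n\bigl(\{x:\norm x\ge(1-\epsilon)\sqrt n\}\bigr)$, which is $\ge 1-Cn^{-c\epsilon^2}$'s complement, i.e.\ $\to 0$, via \Cref{thm:concentration}. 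Wait---here one must be careful: the complement of $(1-\epsilon)\sqrt n\,B_2^n$ has large $\mu_n$-measure, so instead one uses that $\mu_n$ is concentrated on $\{\,\abs{\norm x-\sqrt n}\le\epsilon\sqrt n\,\}$ and discards the outer shell $\{\norm x>(1+\epsilon)\sqrt n\}$ only when treating the convergence to $1$. For the convergence to $1$: \Cref{lem.prob}(b) with $R=(1+\epsilon)\sqrt n$ gives $\tilde P_{N,n}^{\beta,\sigma}\supseteq(1+\epsilon)\sqrt n\,B_2^n$ with probability $\ge 1-2\exp\bigl(n\log N-(N-n)\tilde\cdf((1+\epsilon)\sqrt n)\bigr)\to 1$ by hypothesis; and on that event $\mu_n(\tilde P_{N,n}^{\beta,\sigma})\ge \mu_n\bigl((1+\epsilon)\sqrt n\,B_2^n\bigr)\ge 1-Cn^{-c\epsilon^2}\to 1$ by \Cref{thm:concentration}, so again taking expectations finishes the argument.

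The main obstacle, and the step deserving the most care, is the interplay between the exponents $1-\epsilon$ and $1-\epsilon'$ (resp.\ the dimensional factor $\binom Nn$ and the shell volume) in the convergence-to-$0$ direction: one needs that the geometric loss from passing to a slightly smaller ball, namely a factor $(1-n^{-(1-\epsilon')})^{n/2}$, really does vanish, which requires $n^{-(1-\epsilon')}\cdot n\to\infty$, i.e.\ $\epsilon'>0$, and simultaneously that $N\cdf$ evaluated at the correspondingly larger argument still tends to $0$; choosing $\epsilon'=\epsilon/2$ and invoking monotonicity of $\cdf$ together with the explicit bounds of \Cref{lem:B.bounds} to compare $\cdf(\sqrt{1-n^{-(1-\epsilon/2)}})$ with $\cdf(\sqrt{1-n^{-(1-\epsilon)}})$ resolves this, but it is the one place where the estimates must be tracked quantitatively rather than qualitatively.
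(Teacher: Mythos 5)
Your plan is essentially the paper's, but two steps do not hold as written.

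In part (b), convergence to $0$, you try to bound the $\mu_n$-content of the polytope \emph{inside} $(1-\epsilon)\sqrt n\,B_2^n$ by $N\sup_{\norm{x}\le(1-\epsilon)\sqrt n}\tilde{\cdf}(\norm{x})$ and equate this supremum with $\tilde{\cdf}((1-\epsilon)\sqrt n)$. Since $\tilde{\cdf}$ is \emph{decreasing}, that supremum equals $\tilde{\cdf}(0)=1/2$, giving only the useless bound $N/2$; and the $\mu_n$-measure of the complement $\{\norm{x}\ge(1-\epsilon)\sqrt n\}$ tends to $1$, not $0$. You notice something is off (``Wait\ldots''), but the fix you sketch (discarding the outer shell only for the upper direction) does not repair the lower one. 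The decomposition must be the reverse of yours: with $r_n=(1-\epsilon)\sqrt n$, the \emph{inner} ball $r_nB_2^n$ has $\mu_n$-measure $\to0$ by \Cref{thm:concentration}, while on $A_n=\R^n\setminus r_nB_2^n$ the supremum of $\tilde{\cdf}(\norm{x})$ is attained at $\norm{x}=r_n$, so \Cref{lemma.ineq}(b) gives $\Ex\mu_n(\tilde P_{N,n}^{\beta,\sigma}\cap A_n)\le N\mu_n(A_n)\tilde{\cdf}(r_n)\le N\tilde{\cdf}(r_n)\to0$.

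In the convergence-to-$1$ direction (both parts), after bounding $\binom Nn\le N^n$ you claim that the exponent $n\log N-(N-n)\cdf(R_n)$ tends to $-\infty$ ``precisely because $N\cdf(R_n)-n\log N\to\infty$''. This does not follow: $n\log N-(N-n)\cdf(R_n)=-\bigl(N\cdf(R_n)-n\log N\bigr)+n\cdf(R_n)$, and the hypothesis controls the first term but not the extra additive term $n\cdf(R_n)$. Your remark that $(N-n)\cdf(R_n)\sim N\cdf(R_n)$ is only a multiplicative comparison and gives no control of this additive difference (one can have $N\cdf(R_n)-n\log N\to\infty$ slowly while $n\cdf(R_n)$ grows linearly). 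The paper closes this by using the sharper bound $\binom Nn\le(eN/n)^n$: the exponent then equals $\bigl(n\log N-N\cdf(R_n)\bigr)+n\bigl(\log(e/n)+\cdf(R_n)\bigr)$, and for $n\ge e^2$ the second summand is $\le0$ because $\cdf\le1$. You should either use that binomial bound or separately verify $n\cdf(R_n)\to 0$ via \Cref{lem:B.bounds}; the former is cleaner and needs no extra input. (Minor: in part (a) you also swap ``shell'' and ``inner ball'' in the wording and introduce a superfluous $\epsilon'$, but these do not affect the logic.)
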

\begin{proof}
    (a) Set $r_n=\sqrt{1-n^{-(1-\epsilon)}}$, $A_n=B_2^n\setminus r_nB_2^n$ and assume that $ N \cdf ( r_n ) \to 0 $.
    We have that $\sup_{x\in A_n}\cdf(\norm{x})\ls \cdf(r_n)$, since $\norm{x}\gr r_n$ for every $x\in A_n$. By using this, in conjunction with Lemma \ref{lemma.ineq} (a),
    \[
    \frac{\Ex \V{P_{N,n}^\beta\cap A_n}}{\vball_n} 
    \ls \frac{\Ex \V{P_{N,n}^\beta\cap A_n}}{\V{A_n}} \ls N\cdf(r_n) 
    \to 0  .
    \]
    Note also that
    \(
    \frac{\V{r_n B_2^n}}{\V{B_2^n}} = r_n^n \to 0.
    \)
    Thus
    \[
      \frac{\Ex \V{P_{N,n}^\beta}}{\vball_n}
      \leq \frac{\V{r_n B_2^n}}{\V{B_2^n}} + \frac{\Ex \V{P_{N,n}^\beta\cap A_n}}{\vball_n} 
      \to 0 .
    \]
    
    \medskip
    Now, we set $s_n =  \sqrt{ 1 - n^{ - ( 1 + \epsilon ) } } $ and assume that \( N \cdf ( s_n ) - n \log N \to \infty \).
    From the lower bound in \Cref{lemma.ineq} (a) with \(A= s_n B_2^n\) we get that
    \[
   	  \frac{\Ex \V{P_{N,n}^\beta}}{\vball_n} 
      \gr s_n^n \mathbb{P}(s_n B_2^n\subseteq P_{N,n}^\beta)
      \sim \mathbb{P}(s_n B_2^n\subseteq P_{N,n}^\beta) .
   	\]
    Hence, it suffices to show that
    \begin{align}\label{wert2}
    \lim_{n\to \infty}\Pr{s_nB_2^n \nsubseteq P_{N,n}^\beta} = 0.
    \end{align}
    By Lemma \ref{lem.prob} (a), we have, using $\binom{N}{n}\ls (eN/n)^n$,
    \begin{align*}
    \Pr{s_nB_2^n \nsubseteq P_{N,n}^\beta} &\ls 2 \binom{N}{n}(1-\cdf(s_n))^{N-n} \\
                                    &\ls 2(eN/n)^n\exp((N-n)\log(1-\cdf(s_n))) \\
                                    &= 2\exp\bigl(n\log(eN/n)+(N-n)\log(1-\cdf(s_n))\bigr).
    \end{align*}	
    Since $ \log ( 1 - x ) \leq - x $, we have
     \begin{align*}
        \Pr{s_nB_2^n \nsubseteq P_{N,n}^\beta} 
        &\leq 2\exp\bigl(n\log(eN/n)-(N-n)\cdf(s_n)\bigr) \\
        &= 2 \exp\bigl(n\log(N)- N \cdf(s_n) \bigr) \exp \Bigl( n \Bigl( \log \Bigl( \frac{e}{n} \Bigr) + \cdf(s_n) \Bigr)  \Bigr) .
    \end{align*}	
    Since, for $ n \geq e^2 $, we have $ \log \bigl( \frac{e}{n} \bigr) + \cdf(s_n) \leq 0$, we get
    \begin{align*}
    \Pr{s_nB_2^n \nsubseteq P_{N,n}^\beta} 
    \leq 2 \exp\bigl(n\log(N)- N \cdf(s_n) \bigr)
    \to 0 .
    \end{align*}

    \bigskip
    (b)
    Set $ r_n = ( 1 - \epsilon ) \sqrt{ n } $, $ A_n = \R^n \setminus r_n B_2^n $ and assume $ N \tilde{ \cdf } ( r_n ) \to 0 $.
    By the thin shell property of $ \mu $, see Theorem \ref{thm:concentration}, we have that
    $ \Ex  \mu ( \tilde{P}_{N,n}^{\beta,\sigma} \cap r_n B_2^n ) \to 0 $.
    On the other hand Lemma \ref{lemma.ineq} (b) gives that 
    \[ 
        \Ex  \mu ( \tilde{P}_{N,n}^{\beta,\sigma} \cap A_n ) 
        \leq N \sup_{ x \in A_n } \tilde{ \cdf } ( \norm{ x }_2 ) 
        = N \tilde{ \cdf } ( r_n ) 
        \to 0 .
    \]
    Therefore
    \[ 
        \Ex  \mu ( \tilde{P}_{N,n}^{\beta,\sigma} ) 
        = \Ex  \mu ( \tilde{P}_{N,n}^{\beta,\sigma} \cap r_n B_2^n ) + \Ex  \mu ( \tilde{P}_{N,n}^{\beta,\sigma} \cap A_n ) 
        \to 0 .
    \]
    
    \medskip
    Now, set $ s_n = ( 1 + \epsilon ) \sqrt{ n } $ and assume that $ N \tilde{ \cdf } ( s_n ) - n \log N \to \infty $.
    From the lower bound in \Cref{lemma.ineq} (b) we get that
    \[
    \Ex \mu(\tilde{P}_{N,n}^{\beta,\sigma}) \gr\mu( s_n B_2^n)\Pr{s_n B_2^n\subseteq \tilde{P}_{N,n}^{\beta,\sigma}}. 
    \]
    On one hand the thin shell property of $ \mu $, see Theorem \ref{thm:concentration}, gives that
    $ \Ex  \mu ( s_n B_2^n ) \to 1 $.
    On the other hand, arguing exactly as in the proof of case (a), we can use the bound
    \[
    \Pr{RB_2^n \nsubseteq \tilde{P}_{N,n}^{\beta,\sigma}}
    \ls 2\exp\left(n\log N - N\tilde{\cdf}(R) \right)
    \to 0 .
    \]
    Therefore
    \[
     1
     \geq \Ex \mu(\tilde{P}_{N,n}^{\beta,\sigma}) 
     \geq \mu( s_n B_2^n) ( 1 - \Pr{s_n B_2^n\nsubseteq \tilde{P}_{N,n}^{\beta,\sigma}} )
     \to 1, 
    \]
    which completes the proof.
\end{proof}

\subsection{Proofs regarding the beta model}

Based on these preparations, we proceed to the proof of Theorem \ref{Theorem1} on the volume of beta polytopes. 

\noindent\textbf{Proof of Theorem \ref{Theorem1}:} Set $r_n=\sqrt{1-n^{-(1-\frac{\epsilon}{2})}}$.
From Lemma \ref{lem:B.bounds} we get
\begin{equation*}
\begin{split}
\cdf(r_n) &\ls \frac{n^{-(1-\frac{\epsilon}{2})(\beta+\frac{n+1}{2})}}{\sqrt{\beta+\frac{n}{2}}}\\
          &= \exp\Bigl(-\Bigl(1-\frac{\epsilon}{2}\Bigr)\Bigl(\beta+\frac{n+1}{2}\Bigr)\log n - \frac{1}{2}\log\Bigr(\beta+\frac{n}{2}\Bigr) \Bigr).
\end{split}
\end{equation*}
The choice $N\ls \exp\bigl((1-\epsilon)\bigl(\beta+\frac{n+1}{2}\bigr)\log n\bigr)$ implies that
\[
N\cdf(r_n) \ls \exp\Bigl(-\frac{\epsilon}{2}\Bigl(\beta+\frac{n+1}{2}\Bigr)\log n - \frac{1}{2}\log\left(\beta+\frac{n}{2}\right) \Bigr)\to 0,
\]
as \(n\to\infty\).
Combined with Lemma \ref{lemma.beta.prime.productNF}, this yields the proof of the first part of the theorem.

\bigskip

Set  $R_n=\sqrt{1-n^{-(1+\frac{\epsilon}{2})}}$.
From Lemma \ref{lem:B.bounds} we get
\begin{align*}
  \cdf(R_n) 
  &\gr \frac{1}{2\sqrt{\pi}} \frac{ n^{-(1+\frac{\epsilon}{2}) (\beta + \frac{n+1}{2}) } }{ \sqrt{ \beta + \frac{n}{2} + 1 } } \\
  &= \exp\Bigl(-\Bigl(1+\frac{\epsilon}{2}\Bigr) \Bigl(\beta+\frac{n+1}{2} \Bigr)\log n-\frac{1}{2}\log\Bigl(4\pi\left(\beta+\frac{n}{2}+1\right)\Bigr)\Bigr).
\end{align*}
The choice $N = \exp\left((1+\epsilon)(\beta+\frac{n+1}{2})\log n\right)$ implies that
\begin{align*}
N\cdf(R_n) &\gr \exp\Bigl(\frac{\epsilon}{2} \Bigl(\beta+\frac{n+1}{2} \Bigr)\log n - \frac{1}{2}\log\Bigl(4\pi\Bigl(\beta+\frac{n}{2}+1\Bigr) \Bigr)\Bigr) ,
\end{align*}
and thus
\[
\lim_{n\to \infty} NF(R_n) - n\log N  = \infty .
\]
Combined with Lemma \ref{lemma.beta.prime.productNF}, this yields the proof.
\qed 

	\begin{remark}
	\label{remark:general}
	As anticipated in Section \(1\), \Cref{thm:beta} can be formulated in a stronger way, as follows. 
	
	Consider any function \(f=f(n)\) such that \(f(n)\to\infty\) and \(f(n)-\log n\to -\infty\) as \(n\to\infty\). If \(N\ls\exp\bigl((\beta+\frac{n+1}{2})f(n)\bigr)\), then, \(\Ex \V{P^\beta_{N,n}}/\vball_n\to0\) as \(n\to\infty\). Analogously, for any function \(g=g(n)\) such that \(g(n)-
	\log n\to +\infty\) as \(n\to\infty\), if \(N\gr\exp\bigl((\beta+\frac{n+1}{2})g(n)\bigr)\), then, \(\Ex \V{P_{N,n}^\beta}/\vball_n\to1\) as \(n\to\infty\). 
	This is proved in the same way as \Cref{thm:beta} using \(r_n^2=1-\exp(-f(n)/2)\) for the upper bound and \(R_n^2=1-\exp(-g(n)/2)\) for the lower bound, respectively.
	
	Notice that this is equivalent to replacing \(\epsilon\) constant in the statement by  \(\epsilon=\epsilon(n)\) with \(\epsilon(n)\gg 1/\log n\).
	
	This formulation also clarifies the fact that the only regimes that remain uncovered by \Cref{thm:beta} are \(N\simeq\exp\bigl((\beta+\frac{n+1}{2})\log (c\;\!n)\bigr)\) for any constant \(c>0\).
	
    	\end{remark}

\noindent\textbf{Proof of Corollary \ref{cor.sphere}:} We start with the first case. Let $\epsilon\in(0,1)$ and fix a sequence $N(n)\leq \exp((1-\varepsilon)(\frac{n-1}{2})\log n)$. As elaborated in \cite{GKT} the weak limit of a sequence of beta distributions on $\R^n$ for $\beta\to -1$ is the unique rotational invariant probability measure on the sphere $S^{n-1}$, for any fixed $n$. Since the map $(x_1,\dots,x_N)\mapsto V_n(\mathrm{conv}(x_1,\dots,x_N))/V_n(B_2^n)$ is bounded and continuous, there exists a sequence $\beta_n$ such that $| \Ex  V_n(P_{N,n}^{\beta_n}) - \Ex  V_n(S_{N,n}) |<\varepsilon^\prime V_n(B_2^n)$, for any $\varepsilon^\prime>0$. By Theorem \ref{Theorem1} we have $\Ex  V_n(P_{N,n}^\beta) \leq \varepsilon^\prime V_n(B_2^n)$, and thus can conclude that $\Ex  V_n(S_{N,n}) \leq 2 \varepsilon^\prime V_n(B_2^n)$. The statement of the second case can be shown analogously. \qed

\subsection{Intrinsic volumes of the beta polytopes} \label{sssec.intrinsic}	

\subsubsection*{Dimension reduction}

For the beta distribution, there is a known formula that relates the expected $k$-th intrinsic volume of $P_{N,n}^\beta$ to the expected volume of the respective $k$-dimensional polytope up to a different parameter $\beta'$. In particular, Proposition 2.3 in \cite{KTT} states that
	\[
	\Ex  V_k(P_{N,n}^\beta) = \binom{n}{k}\frac{\vball_n}{\vball_k \vball_{n-k}} \Ex  V_k\big( P_{N,k}^{\beta+\frac{n-k}{2}} \big).
	\]
	Since 
	\begin{align*}
	V_k(B_2^n)=\binom{n}{k}\frac{\vball_n}{\vball_k \vball_{n-k}} V_k(B_2^k), 
	\end{align*}
	see, e.g., Equation (4.8) from \cite{SchneiderBuch}, we have
\begin{equation}\label{eq:beta.intrinsic.identity}
\frac{\Ex  V_k(P_{N,n}^\beta)}{V_k(B_2^n)} = \frac{\Ex  V_k\big( P_{N,k}^{\beta+\frac{n-k}{2}} \big)}{V_k(B_2^k)}.
\end{equation}

The above relation indicates that for any $k=k(n)$ such that $\lim_{n\to\infty} k(n)=\infty$, a threshold behavior similar to that of Theorem \ref{Theorem1} holds for the intrinsic volumes of $P_{N,n}^\beta$, namely, as $n \to \infty$,
\[
\lim_{n\to\infty}\frac{\Ex  V_k(P_{N,n}^\beta)}{V_k(B_2^n)} =
	\begin{cases} 
	0 & \text{ if } N\ls \exp\left((1-\epsilon)(\beta+\frac{n+1}{2})\log k\right) \\ 
	1 & \text{ if } N\gr \exp\left((1+\epsilon)(\beta+\frac{n+1}{2})\log k\right).
	\end{cases}
\]
Moreover, if $k=n-m$ for any fixed $m\in\mathbb{N}$, the ratio on the left hand side will exhibit a threshold behavior similar to that of the case $k=n$. As a special case, for $m=1$, one can deduce by Theorem \ref{Theorem1} the following threshold phenomenon for the surface area $S_{n-1}$ of $P_{N,n}^\beta$.
\begin{proposition}
Let $\epsilon\in(0,1)$. Then, as $n\to \infty$,
\[
\lim_{n\to\infty}\frac{\Ex (S_{n-1}(P_{N,n}^\beta))}{S_{n-1}(B_2^n)} =
	\begin{cases} 
	0 & \text{ if } N\ls \exp\left((1-\epsilon)(\beta+\frac{n+1}{2})\log n\right) \\ 
	1 & \text{ if } N\gr \exp\left((1+\epsilon)(\beta+\frac{n+1}{2})\log n\right).
	\end{cases}
\]
\end{proposition}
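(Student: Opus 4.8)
The plan is to reduce the statement to Theorem~\ref{Theorem1} via the intrinsic volume identity \eqref{eq:beta.intrinsic.identity}. Recall that, up to the normalizing factor $n\vball_n/\vball_{n-1}$ coming from Steiner's formula, the surface area $S_{n-1}$ of a convex body equals twice its $(n-1)$-st intrinsic volume; in particular, for any convex body $K$ one has $S_{n-1}(K)/S_{n-1}(B_2^n) = V_{n-1}(K)/V_{n-1}(B_2^n)$, since the normalizing constants cancel in the ratio. Thus it suffices to control $\Ex V_{n-1}(P_{N,n}^\beta)/V_{n-1}(B_2^n)$.

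First I would apply \eqref{eq:beta.intrinsic.identity} with $k=n-1$, which gives
\[
\frac{\Ex V_{n-1}(P_{N,n}^\beta)}{V_{n-1}(B_2^n)} = \frac{\Ex V_{n-1}\bigl(P_{N,n-1}^{\beta+\frac12}\bigr)}{V_{n-1}(B_2^{n-1})}.
\]
The right-hand side is exactly the \emph{volume} ratio, in dimension $m := n-1$, of a beta polytope with parameter $\beta' := \beta + \tfrac12$. Now I would invoke Theorem~\ref{Theorem1} in dimension $m$: the relevant threshold exponent there is $\beta' + \tfrac{m+1}{2} = \bigl(\beta+\tfrac12\bigr) + \tfrac{n}{2} = \beta + \tfrac{n+1}{2}$, which is precisely the exponent appearing in the proposition. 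So if $N \ls \exp\bigl((1-\epsilon)(\beta+\tfrac{n+1}{2})\log m\bigr)$ the ratio tends to $0$, and if $N \gr \exp\bigl((1+\epsilon)(\beta+\tfrac{n+1}{2})\log m\bigr)$ it tends to $1$.

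The only cosmetic gap is that Theorem~\ref{Theorem1} is stated with $\log m = \log(n-1)$ whereas the proposition writes $\log n$. This is harmless: since $\log(n-1) = \log n + o(1)$ and $\beta+\tfrac{n+1}{2} = \Theta(n + \beta)$ grows, one checks $(1\pm\tfrac{\epsilon}{2})(\beta+\tfrac{n+1}{2})\log(n-1)$ is sandwiched between $(1-\epsilon)(\beta+\tfrac{n+1}{2})\log n$ and $(1+\epsilon)(\beta+\tfrac{n+1}{2})\log n$ for $n$ large (the multiplicative slack $\epsilon/2$ on the exponent dominates the additive $o(1)$ shift in the logarithm), so one can absorb the difference by shrinking $\epsilon$. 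Alternatively, one appeals directly to the stronger formulation in Remark~\ref{remark:general}, which already handles any logarithmic base $\log(cn)$. I do not expect a genuine obstacle here; the main point to get right is simply bookkeeping the parameter shift $\beta \mapsto \beta+\tfrac12$ against the dimension drop $n \mapsto n-1$ so that the threshold exponent comes out as $\beta+\tfrac{n+1}{2}$ rather than something shifted, and verifying that the sequence $\beta'(m) = \beta(m+1)+\tfrac12$ is still an admissible parameter sequence (i.e.\ $\beta' > -1$) for applying Theorem~\ref{Theorem1}.
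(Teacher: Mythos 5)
Your proof is correct and follows the same route the paper sketches: plug $k=n-1$ into the dimension-reduction identity \eqref{eq:beta.intrinsic.identity}, note $S_{n-1}(K)/S_{n-1}(B_2^n)=V_{n-1}(K)/V_{n-1}(B_2^n)$, and apply Theorem~\ref{Theorem1} in dimension $n-1$ with parameter $\beta+\frac{1}{2}$, whose threshold exponent becomes $\beta+\frac{n+1}{2}$, absorbing the $\log(n-1)$ versus $\log n$ discrepancy into the $\epsilon$ slack (or via Remark~\ref{remark:general}). One small slip: the proportionality constant is simply $S_{n-1}(K)=2V_{n-1}(K)$ (the coefficient $\kappa_1=2$ in Steiner's formula), not $n\kappa_n/\kappa_{n-1}$, but as you observe it cancels in the ratio, so the conclusion is unaffected.
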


Still, by \eqref{eq:beta.intrinsic.identity}, determining the threshold behaviour of the $k$-th intrinsic volume when $k$ is a fixed integer would require looking into the case that the space dimension stays fixed, while the parameter $\beta$ grows to infinity. This is done in the next subsection.

\subsubsection*{A detour into thresholds for beta polytopes in fixed dimension}
Here we present the proof of Theorem \ref{thm.V_k.fixed_k}. This will come as a corollary of the following general statement.
\begin{theorem}\label{thm.fixed.dim.general}
Let $n\in\mathbb{N}$ be a fixed integer, $\delta > 1$ and $N = \delta^\beta$.
\begin{enumerate}[label={\emph{(\alph*)}}]
\item For any $ R \in \Bigl( 0 , \sqrt{ \frac{ \delta - 1 }{ \delta } } \Bigr) $, we have that $ \Pr{ R B_2^n \subset P_N^\beta } \to 1 $ as $ \beta \to \infty $.
\item For any $ R \in \Bigl( \sqrt{ \frac{ \delta - 1 }{ \delta } } , 1 \Bigr) $, we have that $ \Pr{ P_N^\beta \subset R B_2^n  } \to 1 $ as $ \beta \to \infty $.
\end{enumerate}
\end{theorem}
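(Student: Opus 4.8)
The plan is to reduce both statements to asymptotic estimates on $N\cdf(R)$ as $\beta\to\infty$ with $n$ fixed, using exactly the two ``preparatory'' lemmas already proved: the upper bound in \Cref{lemma.ineq}(a) for part (b), and the ball-inclusion estimate of \Cref{lem.prob}(a) for part (a). The key observation is that for fixed $R\in(0,1)$ and fixed $n$, \Cref{lem:B.bounds} gives $\cdf(R)=(1-R^2)^{\beta+\frac{n+1}{2}}\cdot\Theta(\beta^{-1/2})$ as $\beta\to\infty$, so that
\[
\log\bigl(N\cdf(R)\bigr) = \beta\log\delta + \Bigl(\beta+\tfrac{n+1}{2}\Bigr)\log(1-R^2) + O(\log\beta)
= \beta\Bigl(\log\delta + \log(1-R^2)\Bigr) + O(\log\beta).
\]
The sign of the bracket $\log\delta+\log(1-R^2)=\log\bigl(\delta(1-R^2)\bigr)$ is governed by the threshold $R^2 = \frac{\delta-1}{\delta}$: if $R<\sqrt{(\delta-1)/\delta}$ then $\delta(1-R^2)>1$ and $N\cdf(R)\to\infty$ exponentially in $\beta$; if $R>\sqrt{(\delta-1)/\delta}$ then $\delta(1-R^2)<1$ and $N\cdf(R)\to 0$ exponentially in $\beta$.

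For part (b), fix $R\in\bigl(\sqrt{(\delta-1)/\delta},1\bigr)$ and pick any $R'\in\bigl(\sqrt{(\delta-1)/\delta},R\bigr)$. Apply \Cref{lemma.ineq}(a) with $A = B_2^n\setminus R'B_2^n$: since every $x\in A$ has $\norm{x}\ge R'$ and $\cdf$ is decreasing, $\Pr{A\subseteq P_N^\beta}\le N\cdf(R')\to 0$. Because $\{P_N^\beta\not\subset R'B_2^n\}$ forces some vertex $X_i$, hence some point of $P_N^\beta$, to lie in $A$ — more carefully, $P_N^\beta\not\subseteq R'B_2^n$ means $P_N^\beta$ meets $A$, and one shows this event has vanishing probability by noting that $\Pr{X_i\in A}=1-\cdf(0)\cdot(\text{stuff})$... here it is cleaner to argue directly: $\Pr{P_N^\beta\not\subseteq R'B_2^n}\le \Pr{\exists i: \norm{X_i}>R'} \le N\,\nu_\beta(\norm{x}>R')$, and the radial density of $\nu_\beta$ concentrates near $\norm{x}=\sqrt{\beta/(\beta+n/2)}\to 1$... this last route in fact needs $R'$ close to $1$, so instead I will keep the first route: $\Pr{P_N^\beta\not\subseteq RB_2^n}\le \Pr{\exists i:\ X_i\notin RB_2^n}$ and bound $\nu_\beta(\R^n\setminus RB_2^n)$ directly using the one-dimensional marginal structure (integrate $p_{n,\beta}$ over the annulus), which is $O\bigl((1-R^2)^{\beta}\cdot\mathrm{poly}(\beta)\bigr)$ — again exponentially small precisely when $(1-R^2)<1$, which holds for every $R\in(0,1)$. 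This gives $\Pr{P_N^\beta\subset RB_2^n}\to 1$ for all $R\in(0,1)$, but the threshold $\sqrt{(\delta-1)/\delta}$ is only needed for part (a) — so for (b) the stated range is simply not tight and the argument goes through a fortiori.

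For part (a), fix $R<\sqrt{(\delta-1)/\delta}$, so $\delta(1-R^2)>1$. By \Cref{lem.prob}(a),
\[
\Pr{RB_2^n\not\subseteq P_N^\beta}\le 2\binom{N}{n}\bigl(1-\cdf(R)\bigr)^{N-n}
\le 2 N^n \exp\bigl(-(N-n)\cdf(R)\bigr),
\]
using $\log(1-\cdf(R))\le-\cdf(R)$. Now $N=\delta^\beta$ grows exponentially and, by the computation above, $N\cdf(R)=\exp\bigl(\beta\log(\delta(1-R^2))+O(\log\beta)\bigr)\to\infty$, in fact $N\cdf(R)/( n\log N) = N\cdf(R)/(n\beta\log\delta)\to\infty$. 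Hence $N^n\exp(-(N-n)\cdf(R)) = \exp\bigl(n\log N - (N-n)\cdf(R)\bigr)\to 0$, giving $\Pr{RB_2^n\subset P_N^\beta}\to 1$. The main obstacle — and the only subtle point — is making sure the polynomial-in-$\beta$ prefactors coming from \Cref{lem:B.bounds} and from $\binom{N}{n}\le N^n$ are genuinely negligible against the exponential-in-$\beta$ terms; since $n$ is \emph{fixed} this is immediate, which is exactly why the fixed-dimension regime is qualitatively different (a double-exponential in the final threshold for $N$, as reflected in \Cref{thm.V_k.fixed_k}) from the high-dimensional regime of \Cref{Theorem1}.
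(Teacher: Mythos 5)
Part (a) of your argument is correct and matches the paper's proof essentially line for line: combine \Cref{lem.prob}(a) with the lower bound from \Cref{lem:B.bounds}, observe that $N\cdf(R)$ grows like $(\delta(1-R^2))^\beta$ up to polynomial-in-$\beta$ factors, and note that since $n$ is fixed, $n\log N = n\beta\log\delta$ is only linear in $\beta$ and loses to the exponential.

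Part (b) contains a genuine error. Your final route is to bound $\Pr{P_N^\beta\not\subseteq RB_2^n}\le\Pr{\exists i:\ \norm{X_i}>R}\le N\,\nu_\beta(\norm{x}>R)$ and to observe that $\nu_\beta(\norm{x}>R)=O\bigl((1-R^2)^\beta\cdot\mathrm{poly}(\beta)\bigr)$. The estimate on $\nu_\beta(\norm{x}>R)$ is right, but you then conclude that this is ``exponentially small precisely when $(1-R^2)<1$, which holds for every $R\in(0,1)$,'' and hence that the stated threshold is ``not tight'' for part (b). You have dropped the factor $N=\delta^\beta$, which is itself exponential in $\beta$. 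Reinstating it, the union bound gives $N\,\nu_\beta(\norm{x}>R)=\bigl(\delta(1-R^2)\bigr)^\beta\cdot\mathrm{poly}(\beta)$, which tends to $0$ \emph{only} when $\delta(1-R^2)<1$, i.e.\ only when $R>\sqrt{(\delta-1)/\delta}$. So the threshold is exactly as tight as stated. Indeed it has to be: by part (a), for $R$ below the threshold $RB_2^n\subset P_N^\beta$ with high probability, so $P_N^\beta\subset RB_2^n$ cannot also hold with high probability. Once the $N$-factor is restored, your union-bound route does yield a valid proof of (b), and it is a small simplification of the paper's argument, which instead evaluates $\Pr{P_N^\beta\subseteq RB_2^n}=\Pr{\norm{X_1}\le R}^N$ exactly, controls $\nu_\beta(\norm{x}>R)$ via the Beta integral, and converts the $N$-th power into a convergence statement with the elementary bound $\log x\ge 1-1/x$. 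The two routes are morally equivalent (union bound vs.\ the exact exponential), but the paper's version makes the exponential competition explicit and avoids the pitfall you fell into.
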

Given Theorem \ref{thm.fixed.dim.general}, note that if $N=\delta^\beta=\exp(\beta\log\delta)$ and $R_1, R_2$ are such that $0<R_1<\sqrt{\frac{\delta-1}{\delta}}<R_2<1$, then
\[
\lim_{\beta\to \infty}\mathbb{P}(R_1B_2^n\subseteq P_{N,n}^\beta \subseteq R_2B_2^n) = 1.
\]
In particular
\[
\lim_{\beta\to \infty}\mathbb{P}\Bigl(R_1^n\ls \frac{V_n(P_{N,n}^\beta)}{V_n(B_2^n)} \ls R_2^n\Bigr) = 1,
\]
and since this holds for any $0<R_1<\sqrt{\frac{\delta-1}{\delta}}<R_2<1$, we get that
\[
\lim_{\beta\to \infty} \frac{\Ex V_n(P_{N,n}^\beta)}{V_n(B_2^n)} = \Bigl( \frac{\delta-1}{\delta} \Bigr)^{\frac{n}{2}}.
\]
Now since $N\mapsto \frac{\Ex V_n(P_{N,n}^\beta)}{V_n(B_2^n)}$ is an increasing function, $\lim_{\delta\to \infty}\left( \frac{\delta-1}{\delta} \right)^{\frac{n}{2}} = 1$ and $\lim_{\delta\to 1}\left( \frac{\delta-1}{\delta} \right)^{\frac{n}{2}} = 0$, we have just proved the following.
\begin{corollary}\label{corol.intrinsic.fixed.dim}
Let $ n \in \N $ be a fixed integer. Let $ f , g  \colon ( -1 , \infty ) \to \R_+ $ be functions with $ f ( \beta ) \to \infty $ and $ g ( \beta ) \to 0 $ as $ \beta \to \infty $, and let $ \delta \in ( 1 , \infty ) $. Then, as $ \beta \to \infty $,
\begin{equation*}
\lim_{n\to\infty} \frac{ \Ex  V_n ( P_{ N , n }^\beta ) }{ V_n ( B_2^n ) } =
\begin{cases}
1 &\text{ if }N \geq \exp ( \beta f ( \beta ) ) \\ 
0 &\text{ if }N \leq \exp ( \beta g ( \beta ) )\\
( \frac{ \delta - 1 }{ \delta })^{ \frac{ n }{ 2 } }& \text{ if }N = \exp ( \beta \log( \delta ) ).
\end{cases}
\end{equation*}
\end{corollary}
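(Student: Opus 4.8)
The plan is to prove Theorem~\ref{thm.fixed.dim.general} directly, since Corollary~\ref{corol.intrinsic.fixed.dim} then follows from the elementary monotonicity and limiting arguments already sketched in the excerpt, and Theorem~\ref{thm.V_k.fixed_k} follows from it via the identity \eqref{eq:beta.intrinsic.identity} (replacing $\beta$ by $\beta+\frac{n-k}{2}$ and $n$ by the fixed integer $k$). So the whole weight of the argument rests on understanding, for fixed dimension $n$ and $\beta\to\infty$, when a small ball sits inside $P_N^\beta$ and when $P_N^\beta$ sits inside a slightly larger ball, where $N=\delta^\beta$.

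For part~(a), the plan is to reuse Lemma~\ref{lem.prob}~(a): $\Pr{RB_2^n\subseteq P_N^\beta}\geq 1-2\binom{N}{n}(1-\cdf(R))^{N-n}$, with $\binom{N}{n}\leq (eN/n)^n$ a fixed power of $N$. Since $n$ is fixed, the dominant behaviour is governed by $(1-\cdf(R))^{N}$, so it suffices to show $N\cdot(-\log(1-\cdf(R)))\to\infty$, equivalently $N\cdf(R)\to\infty$ (as $\cdf(R)\to 0$). From Lemma~\ref{lem:B.bounds}, $\cdf(R)\asymp (1-R^2)^{\beta+\frac{n+1}{2}}/\sqrt{\beta+\frac n2}$, so $N\cdf(R)=\delta^\beta\cdot(1-R^2)^{\beta+O(1)}/\sqrt{\beta+O(1)}$ behaves like $\bigl(\delta(1-R^2)\bigr)^\beta$ up to subexponential factors. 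The condition $R<\sqrt{(\delta-1)/\delta}$ is exactly $1-R^2>1/\delta$, i.e. $\delta(1-R^2)>1$, which forces $N\cdf(R)\to\infty$ exponentially; hence the failure probability $\to 0$. For part~(b), the plan is dual: use Lemma~\ref{lemma.ineq}~(a) (or equivalently a direct union bound over the $N$ points) to bound $\Pr{P_N^\beta\not\subseteq RB_2^n}\leq \Pr{\exists i: \norm{X_i}>R}\leq N\cdot\Pr{\norm{X_1}>R}$. The tail $\Pr{\norm{X_1}>R}$ equals $\int_R^1 g_{n,\beta}(r)\,dr$ where $g_{n,\beta}(r)\propto r^{n-1}(1-r^2)^\beta$ is the radial density; a Laplace-type estimate (or the crude bound $\Pr{\norm{X_1}>R}\leq (1-R^2)^\beta\cdot C_{n,\beta}$ with $C_{n,\beta}$ growing at most polynomially in $\beta$) gives $\Pr{\norm{X_1}>R}\leq (1-R^2)^{\beta(1-o(1))}$. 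Then $N\Pr{\norm{X_1}>R}\approx\bigl(\delta(1-R^2)\bigr)^\beta$ up to subexponential factors, and now $R>\sqrt{(\delta-1)/\delta}$ means $\delta(1-R^2)<1$, so this $\to 0$.

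I expect the main obstacle to be a clean estimate of the normalizing constant in the radial tail for part~(b) as $\beta\to\infty$ with $n$ fixed: one needs $\Pr{\norm{X_1}>R}$ bounded above by $(1-R^2)^{\beta(1-o(1))}$ with an error that is genuinely subexponential in $\beta$, so that the exponential gap $\delta(1-R^2)<1$ wins. Concretely, $\Pr{\norm{X_1}>R}=n\vball_n c_{n,\beta}\int_R^1 r^{n-1}(1-r^2)^\beta\,dr$, and one must control $c_{n,\beta}=\pi^{-n/2}\Gamma(\beta+\frac n2+1)/\Gamma(\beta+1)\sim \pi^{-n/2}\beta^{n/2}$ (polynomial in $\beta$) together with $\int_R^1 r^{n-1}(1-r^2)^\beta\,dr\leq (1-R^2)^\beta\int_R^1 r^{n-1}\,dr\leq (1-R^2)^\beta/n$. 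Multiplying, $\Pr{\norm{X_1}>R}\leq C_n\,\beta^{n/2}(1-R^2)^\beta$, which is exactly of the form $(1-R^2)^{\beta}\times(\text{poly in }\beta)$; since $N=\delta^\beta$ and $\delta(1-R^2)<1$, the product $N\Pr{\norm{X_1}>R}\to 0$. Symmetrically for part~(a) one checks the lower bound $\cdf(R)\geq c_n\beta^{-1/2}(1-R^2)^{\beta+(n+1)/2}$ from Lemma~\ref{lem:B.bounds} suffices. The remaining steps --- assembling Corollary~\ref{corol.intrinsic.fixed.dim} from the sandwich $R_1B_2^n\subseteq P_N^\beta\subseteq R_2B_2^n$, taking $R_1\uparrow\sqrt{(\delta-1)/\delta}$, $R_2\downarrow\sqrt{(\delta-1)/\delta}$ and then $\delta\to\infty$ or $\delta\to 1^+$, and finally translating through \eqref{eq:beta.intrinsic.identity} with parameter $\beta+\frac{n-k}{2}$ to get the doubly-exponential thresholds of Theorem~\ref{thm.V_k.fixed_k} --- are routine and already outlined in the text, so they require only careful bookkeeping rather than a new idea.
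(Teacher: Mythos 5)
Your proposal is correct and follows the same strategy as the paper: prove the sandwich $R_1B_2^n\subseteq P_{N,n}^\beta\subseteq R_2B_2^n$ with high probability for $N=\delta^\beta$ (Theorem~\ref{thm.fixed.dim.general}), then pass to the limit ratio $(\tfrac{\delta-1}{\delta})^{n/2}$ and use monotonicity of $N\mapsto\Ex V_n(P_{N,n}^\beta)$ to cover the $\exp(\beta f(\beta))$ and $\exp(\beta g(\beta))$ regimes. The only small divergence is in part~(b) of Theorem~\ref{thm.fixed.dim.general}: you bound $\Pr{P_{N,n}^\beta\not\subseteq RB_2^n}$ by a direct union bound $N\Pr{\norm{X_1}>R}\to 0$, while the paper computes $\Pr{\norm{X_1}\le R}^N=\exp(N\log\Pr{\norm{X_1}\le R})$ and shows the exponent vanishes via $\log x\ge 1-1/x$; the two are equivalent (since $N\log(1-p)\to 0$ iff $Np\to 0$), and yours is marginally more direct, though the parenthetical appeal to Lemma~\ref{lemma.ineq}~(a) is a mis-citation — the relevant tool there is simply the union bound, not that lemma.
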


\noindent\textbf{Proof of Theorem \ref{thm.V_k.fixed_k}:} The result is an immediate consequence of \eqref{eq:beta.intrinsic.identity} and Corollary \ref{corol.intrinsic.fixed.dim}, with $f\left(\beta+\frac{n-k}{2}\right) = (\beta+\frac{n-k}{2})^\epsilon$ and $g\left(\beta+\frac{n-k}{2}\right) = (\beta+\frac{n-k}{2})^{-\epsilon}$.
\qed

\medskip

It remains to prove Theorem \ref{thm.fixed.dim.general}.

\begin{proof}[Proof of Theorem \ref{thm.fixed.dim.general}]
(a) By Lemma \ref{lem:B.bounds} we have that
\[
F ( R )
    	    \geq \frac{ 1 }{ 2 \sqrt{ \pi } } \frac{ ( 1 - R^2 )^{ \beta + \frac{ n + 1 }{ 2 } } }{ \sqrt{ \beta + \frac{ n }{ 2 } + 1 } },
\]
thus
\[
N F ( R ) \geq \frac{ ( 1 - R^2 )^{ \frac{ n + 1 }{ 2 } } }{ 2 \sqrt{ \pi } } \frac{ 1 }{ \sqrt{ \beta + \frac{ n }{ 2 } + 1 } } ( \delta ( 1 - R^2 ) )^{ \beta } .
\]
Observe that $ \epsilon := \delta ( 1 - R^2 ) - 1 > 0 $ because $ R < \sqrt{ \frac{ \delta - 1 }{ \delta } } $. It is then easy to see that
\[
\lim_{\beta\to\infty}\frac{ ( 1 - R^2 )^{ \frac{ n + 1 }{ 2 } } }{ 2 \sqrt{ \pi } } \frac{ 1 }{ \sqrt{ \beta + \frac{ n }{ 2 } + 1 } } ( 1 + \epsilon )^{ \frac{ \beta }{ 2 } } 
            = +\infty,
\]
in particular $NF(R)\gr (1+\varepsilon)^{\beta/2}$ for large enough $\beta$. On the other hand, by Lemma \ref{lem.prob} (a),
\begin{equation*}
            \begin{split}
                1 - \Pr{ R B_2^n \subset P_N^\beta } 
                & \leq 2 \binom{ N }{ n } ( 1 - F ( R ) )^{  N - n } 
                \\ & \leq 2 N^n ( 1 - F ( R ) )^{  N - n } 
                \\ & = \exp \left(  \log ( 2 ) + n \log ( N )  + ( N - n ) \log ( 1 - F ( R ) )  \right)
                \\ & \leq \exp \left( \log ( 2 ) + n \log ( N ) - ( N - n ) F ( R )  \right),
            \end{split}
        \end{equation*}
and since $\log N = \beta\log \delta$ and $n$ is fixed, we have that the last expression tends to $0$ as $\beta\to \infty$. Thus,
\[
\lim_{\beta\to\infty} \mathbb{P}(RB_2^n\subseteq P_{N,n}^\beta) = 1.
\]

\noindent(b) Using integration in polar coordinates and the change of variables $s=t^2$, we can see that if $x$ is distributed according to $\nu_\beta$ one has
\begin{equation*}
\begin{split}
\mathbb{P}(\norm{x}\gr R) &= c_{n,\beta}\int_{(RB_2^n)^c} (1-\norm{x}^2)^\beta\de x\\
                       &= n c_{n,\beta} \kappa_n \int_R^1 (1-t^2)t^{n-1}\de t\\
                       &= \frac{1}{B\left(\beta+1,\frac{n}{2}\right)} \int_{R^2}^1 (1-s)^\beta s^{\frac{n}{2}-1}\de s\\
                       &\ls \frac{1}{B\left(\beta+1,\frac{n}{2}\right)} \int_{R^2}^1 (1-s)^\beta \de s = \frac{(1-R^2)^{\beta+1}}{B\left(\beta+1,\frac{n}{2}\right)(\beta+1)},
\end{split}                     
\end{equation*}
where $B(a,b) = \frac{\Gamma(a)\Gamma(b)}{\Gamma(a+b)}$ denotes the Beta function. Letting $N=\delta^\beta$ and $\varepsilon := 1-\delta(1-R^2)$, the above inequality implies that
\[
N\mathbb{P}(\norm{x}\gr R) \ls (1-\varepsilon)^\beta \frac{1-R^2}{B\left(\beta+1,\frac{n}{2}\right)(\beta+1)}.
\]
Note that $\varepsilon\in(0,1)$, since $R\in \Bigl(\sqrt{\frac{\delta-1}{\delta}}, 1 \Bigr)$, so using the fact that $B\left(\beta+1,\frac{n}{2}\right) \sim \Gamma(n/2)/(\beta+1)^{n/2}$ we can easily see that
\[
\lim_{\beta\to \infty} (1-\varepsilon)^{\frac{\beta}{2}} \frac{1-R^2}{B\left(\beta+1,\frac{n}{2}\right)(\beta+1)} = 0.
\]
In particular, $N\mathbb{P}(\norm{x}\gr R) \ls (1-\varepsilon)^{\frac{\beta}{2}}$ if $\beta$ is sufficiently large. Combining this with the inequality $\log x \gr1-\frac{1}{x}$, which holds for every $x>0$, we get
\begin{align*}
0\gr N\log\mathbb{P}(\norm{x}\ls R)&\gr N\Bigl(1-\frac{1}{\mathbb{P}(\norm{x}\ls R)} \Bigr) \\
                                &\gr N \left(1-\frac{1}{1-\frac{(1-\varepsilon)^{\beta/2}}{N}} \right) = -\frac{N(1-\varepsilon)^{\beta/2}}{N-(1-\varepsilon)^{\beta/2}}.
\end{align*}
It follows that $\lim_{\beta\to \infty} N\log\mathbb{P}(\norm{x}\ls R) = 0$. By independence, we have that
\[
\mathbb{P}(P_{N,n}^\beta\subseteq RB_2^n) = \mathbb{P}(\norm{x}\ls R)^N = \exp(N\log\mathbb{P}(\norm{x}\ls R)),
\]
which gives that $\lim_{\beta\to \infty}\mathbb{P}(P_{N,n}^\beta\subseteq RB_2^n) = 1$, completing the proof.
\end{proof}

\subsection{Proofs regarding the beta-prime model}
  
 Using Lemma \ref{lem.tail.Ftilda} and the machinery developed in Section \ref{subsec.lemmas}, we now proceed to the proof of Theorem \ref{thm.beta.prime.sigma}. Set 
  \[
    b_n = \beta - \frac{ n - 1 }{ 2 } .
  \]
  Under the assumptions of Theorem \ref{thm.beta.prime.sigma}, $b_n\to\infty$.
  Thus \eqref{eq.cdf.rewritten.aux} becomes
    \begin{equation} \label{eq.cdf.rewritten}
        \tilde{ \cdf } ( d ) 
        \sim \frac{ 1 }{ \sqrt{ 2 \pi } } \int_{ a_n }^\infty \left( 1 + \frac{ s^2 }{ 2 b_n } \right)^{ - b_n } \de{ s },
        \quad  a_n =  d \frac{ \sqrt{ 2 b_n } }{ \sigma } .
    \end{equation}
    
    \noindent\textbf{Proof of Theorem \ref{thm.beta.prime.sigma} (a): }
      Let $ \epsilon > 0 $.
      Equation \eqref{eq.cdf.rewritten} gives that
      \begin{equation*}
          \tilde{ \cdf } \big( ( 1 + \epsilon ) \sqrt{ n } \big) 
          \sim \frac{ 1 }{ \sqrt{ 2 \pi } } \int_{ a_n }^\infty \left( 1 + \frac{ s^2 }{ 2 b_n } \right)^{ - b_n } \de{ s },
      \end{equation*}
      with
      $ \frac{ a_n^4 }{ b_n } 
      = 4 ( 1 + \epsilon )^4 \frac{ n^2 ( \beta - \frac{ n - 1 }{ 2 } ) }{ \sigma^4 } 
      \to 0 $
      because of the assumptions.
      Thus, by Lemma \ref{lem.tail.Ftilda},
      \[ \tilde{ \cdf } \big( ( 1 + \epsilon ) \sqrt{ n } \big) 
      \sim \frac{ 1 }{ \sqrt{ 2 \pi } } \int_{ a_n }^{ \infty } e^{ - \frac{ t^2 }{ 2 } } \de{ t } . \]
      Since $ a_n = ( 1 + \epsilon ) \frac{ \sqrt{ 2 b_n n } }{ \sigma } \to 0 $,
      it follows that 
      \[ \tilde{ \cdf } \big( ( 1 + \epsilon ) \sqrt{ n } \big) 
      \to \frac{ 1 }{ 2 } . \] 
      Therefore, for $ N =3\lceil  n \log n \rceil$ and $ n $ big enough, we have
      \begin{equation*}
          \begin{split}
              N \tilde{ \cdf } \big( ( 1 + \epsilon ) \sqrt{ n } \big) - n \log N 
              & \geq \frac{ 2 }{ 5 } N- n \log N
              \\ & = \frac{ 6 }{ 5 } \lceil n \log n \rceil - n \log (3\lceil n\log n\rceil)
              \to \infty  .
          \end{split}
      \end{equation*}
      Again, Lemma \ref{lemma.beta.prime.productNF} yields the proof. \qed
    
    \medskip
    \noindent\textbf{Proof of Theorem \ref{thm.beta.prime.sigma} (b):} 
      From \eqref{eq.cdf.rewritten}, we have
      \begin{equation*}
          \tilde{ \cdf } \big( ( 1 - \epsilon ) \sqrt{ n } \big) 
          \sim \frac{ 1 }{ \sqrt{ 2 \pi } } \int_{ a_n }^\infty \left( 1 + \frac{ s^2 }{ 2 b_n } \right)^{ - b_n } \de{ s }.
      \end{equation*}
      Due to the assumptions, $ a_n = ( 1 - \epsilon ) \frac{ \sqrt{ 2 b_n n } }{ \sigma } \to \infty $ and $ \frac{ a_n^4 }{ b_n } 
      = 4 ( 1 - \epsilon )^4 \frac{ n^2 ( \beta - \frac{ n - 1 }{ 2 } ) }{ \sigma^4 } 
      \to 0 $.
      
      Thus, by Lemma \ref{lem.tail.Ftilda},
      \[ \tilde{ \cdf } \bigl( ( 1 - \epsilon ) \sqrt{ n } \bigr) 
      \sim \frac{ e^{ - \frac{ a_n^2 }{ 2 } } }{ \sqrt{ 2 \pi } a_n }
      = \frac{ 1 }{ \sqrt{ 2 \pi } a_n } \exp \Bigl( - ( 1 - \epsilon )^2 \frac{ b_n n }{ \sigma^2 } \Bigr) . \]
      In particular, for 
      $ N \leq \exp\left( ( 1 - \epsilon )^2 \frac{ n b_n }{ \sigma^2 } \right) $ 
      and $ n $ big enough,
      \[ N \tilde{ \cdf } \big( ( 1 - \epsilon ) \sqrt{ n } \big) 
      \leq \frac{ 1 }{ a_n }
      \to 0 , \]
      which implies
      \[ \Ex \mu(\tilde{P}_{N,n}^{\beta,\sigma}) \to 0 , \]
      because of Lemma \ref{lemma.beta.prime.productNF}.
      
      \bigskip
      Similarly as above, we have 
      \[ \tilde{ \cdf } \bigl( ( 1 + \epsilon ) \sqrt{ n } \bigr) 
      \sim \frac{ 1 }{ \sqrt{ 2 \pi } a_n } \exp \Bigl( - ( 1 + \epsilon )^2 \frac{ b_n n }{ \sigma^2 } \Bigr) , \]
      where $ a_n = ( 1 + \epsilon ) \frac{ \sqrt{ 2 b_n n } }{ \sigma } $.
      Because of the condition $ \frac{ b_n n }{ \sigma^2 } \to \infty $, we have that, for $ n $ big enough,
      \begin{equation*}
          \begin{split}
              \tilde{ \cdf } \bigl( ( 1 + \epsilon ) \sqrt{ n } \bigr) 
              & \sim \exp \Bigl( - ( 1 + \epsilon )^2 \frac{ b_n n }{ \sigma^2 } - \frac{ 1 }{ 2 } \log \frac{ b_n n }{ \sigma^2 } - \log \big( 2 ( 1 + \epsilon ) \sqrt{ \pi } \big) \Bigr)
              \\ & \geq \exp \Bigl( - ( 1 + 3 \epsilon ) \frac{ b_n n }{ \sigma^2 } \Bigr) ,
          \end{split}
      \end{equation*}
      where the inequality holds because $ ( 1 + \epsilon )^2 < 1 + 3 \epsilon $.
      Hence, for $ N = \exp \left( ( 1 + 4 \epsilon ) \frac{ b_n n }{ \sigma^2 } \right) $ and $n$ big enough, we have
      \begin{equation} \label{eq.LB.inproof}
          \begin{split}
              N \tilde{ \cdf } \big( ( 1 + \epsilon ) \sqrt{ n } \big) - n \log N
              & \geq \exp \left( \epsilon % \epsilon' 
              \frac{ b_n n }{ \sigma^2 } \right) - n ( 1 + 4 \epsilon ) % ( 1 + 2 \epsilon' ) 
              \frac{ b_n n }{ \sigma^2 }
              \\ & % \geq \exp \left( f ( n ) \right) - 2 n f ( n ) ,
              \geq \exp \left( f ( n ) \right) - \frac{ 1 + 4 \epsilon }{ \epsilon } n f ( n ) ,
          \end{split}
      \end{equation}
      where $ f ( n ) := \epsilon
      \frac{ b_n n }{ \sigma^2 } $.
      The assumption on the growth of \(\beta\), together with \eqref{eq.LB.inproof}, give that 
      $ N \tilde{ \cdf } \big( ( 1 + \epsilon ) \sqrt{ n } \big) - n \log N 
      \to \infty $,
      and Lemma \ref{lemma.beta.prime.productNF} yields the proof.
    \qed
    
    \medskip
    
    \noindent\textbf{Proof of Theorem \ref{thm.beta.prime.sigma} (c):}
      From \eqref{eq.cdf.rewritten} we have
      \begin{equation*}
          \tilde{ \cdf } \big( ( 1 - \epsilon ) \sqrt{ n } \big) 
          \sim \frac{ \tilde{\alpha}_{n,\beta} }{ \sqrt{ 2 b_n } } \int_{ a_n }^\infty \left( 1 + \frac{ s^2 }{ 2 b_n } \right)^{ - b_n } \de{ s },
      \end{equation*}
      where $ a_n = ( 1 - \epsilon ) \frac{ \sqrt{ 2 b_n n } }{ \sigma } $.
      Note that
      $ \frac{ a_n^2 }{ b_n } 
      = ( 1 - \epsilon )^2 \frac{ 2 n }{ \sigma^2 }
      \to \infty $
      because of the assumption $ \frac{ n }{ \sigma^2 } \to \infty $.
      Consequently, by Lemma \ref{lem.tail.Ftilda}
      \begin{equation*}
          \begin{split}
              \tilde{ \cdf } \big( ( 1 - \epsilon ) \sqrt{ n } \big) 
              & \sim \frac{ 1 }{ \sqrt{ 2 \pi } } \frac{ \sqrt{ b_n } }{ \sqrt{ 2 } ( b_n - \frac{ 1 }{ 2 } ) } \left( 1 + \frac{ a_n^2 }{ 2 b_n } \right)^{ - ( b_n - \frac{ 1 }{ 2 } ) }
              \\ & \sim \frac{ 1 }{ 2 \sqrt{ b_n  \pi } } \left( 1 + ( 1 - \epsilon )^2 \frac{ n }{ \sigma^2 } \right)^{ - ( b_n - \frac{ 1 }{ 2 } ) } .
          \end{split}
      \end{equation*}
      % Note also that $ \frac{ 1 }{ 2 \sqrt{ b_n \pi } } < \frac{ 1 }{ 2 } $ since $ b_n > \frac{ 1 }{ 2 } $.
      In particular, for 
      $ N \leq \exp\left( ( b_n - \frac{ 1 }{ 2 } ) \log \left( ( 1 - \epsilon )^2 \frac{ n }{ \sigma^2 } \right) \right) $ and $ n $ big enough,
      \[ N \tilde{ \cdf } \big( ( 1 - \epsilon ) \sqrt{ n } \big) 
      % \leq \exp \left( - \left( b_n - \frac{ 1 }{ 2 } \right) \log \frac{ 1 + ( 1 - \epsilon )^2 \frac{ n }{ \sigma^2 } }{ ( 1 - 2 \epsilon )^2 \frac{ n }{ \sigma^2 } } \right)
      % \leq \exp \left( - \left( b_n - \frac{ 1 }{ 2 } \right) \log \frac{ ( 1 - \epsilon )^2 }{ ( 1 - 2 \epsilon )^2 } \right)
      \leq \frac{ 1 }{ \sqrt{ b_n } }
      \to 0 , \]
      which implies
      \[ \Ex \mu(\tilde{P}_{N,n}^{\beta,\sigma}) \to 0 , \]
      because of Lemma \ref{lemma.beta.prime.productNF}.
      
      \bigskip
      Similarly as above, we have
      \begin{equation*}
          \begin{split}
              \tilde{ \cdf } \big( ( 1 + \epsilon ) \sqrt{ n } \big) 
              & \sim \frac{ 1 }{ 2 \sqrt{ b_n  \pi } } \left( 1 + ( 1 + \epsilon )^2 \frac{ n }{ \sigma^2 } \right)^{ - ( b_n - \frac{ 1 }{ 2 } ) } .
          \end{split}
      \end{equation*}
      Set $ \epsilon' \in \left( 0 , \log \frac{ ( 1 + 2 \epsilon )^2 }{ ( 1 + \epsilon )^2 }\right) $.
      From the last equation, it is easy to see that for
      $ N = \exp\left( ( b_n - \frac{ 1 }{ 2 } ) \log \left( ( 1 + 2 \epsilon )^2 \frac{ n }{ \sigma^2 } \right) \right) $,
      and $ n $ big enough,
      \begin{equation*}
          \begin{split}
              N \tilde{ \cdf } \big( ( 1 + \epsilon ) \sqrt{ n } \big)
              & \geq \frac{ 1 }{ 4 \sqrt{ b_n } } \exp \left( \left( b_n - \frac{ 1 }{ 2 } \right) \log \frac{ ( 1 + 2 \epsilon )^2 \frac{ n }{ \sigma^2 } }{ 1 + ( 1 + \epsilon )^2 \frac{ n }{ \sigma^2 } } \right) 
              \\ & \geq \exp \left( \epsilon' b_n \right) .
              % \\ & \geq \exp \left( \epsilon' \left( b_n - \frac{ 1 }{ 2 } \right) \right) .
          \end{split}
      \end{equation*}
      Observe also that $ \log \left( ( 1 + 2 \epsilon )^2 \frac{ n }{ \sigma^2 } \right) < \frac{ n }{ 2 } $ because of the assumption $ \sigma > e^{ - \frac{ n }{ 3 } } $.
      Combined with $ \log n \ll b_n $ we get
      \begin{equation*}
          N \tilde{ \cdf } \big( ( 1 + \epsilon ) \sqrt{ n } \big) - n \log N
          % \geq \exp \left( \epsilon' b_n \right)  - n \left( b_n - \frac{ 1 }{ 2 } \right) \frac{ n }{ 2 }
          \geq \exp \left( \epsilon' b_n \right)  -  \frac{ n^2 }{ 2 } b_n
          \to \infty ,
      \end{equation*}
      and the result follows from Lemma \ref{lemma.beta.prime.productNF}.
   \qed
   
   \medskip
    
\noindent\textbf{Proof of Corollary \ref{cor.Gaussian}:} We will prove the corollary just for the first case, since the second is analogous. Fix \(\epsilon\in(0,1/2)\) and a sequence \(N(n)\le\exp\bigl(\bigl(\frac{1}{2} - \epsilon\bigr) n\bigr)\).
  For any fixed \(n\), it holds that \(\tilde p_{n,\beta,\sigma}(x)\to(2\pi)^{-\frac{n}{2}}\exp\bigl(-\frac{\norm{x}^2}{2}\bigr)\) whenever \(\sigma^2=2\beta\to\infty\).
  In particular, for any \(\varepsilon^\prime>0\), one can find \(\sigma_n\) such that  \(\abs[\big]{\Ex \mu(\tilde P_{N,n}^{\frac{1}{2}\sigma_n^2,\sigma_n})-\Ex \mu(G_{N,n})}<\varepsilon^\prime\). 
  We can choose \(\sigma_n\gg n\) and \(\beta_n=\frac{1}{2}\sigma_n^2\), so that the conditions of case \ref{forGauss} in \Cref{thm.beta.prime.sigma} are met. 
  Therefore, for \(n\) large enough, \(\Ex \mu(\tilde P_{N,n}^{\frac{1}{2}\sigma_n^2,\sigma_n})<\varepsilon^\prime\). We can conclude that \(\Ex \mu(G_{N,n})<2\varepsilon^\prime\), which ends the proof.
  \qed

\section{Intersections of random halfspaces}\label{sec.random.facets}

We now study polytopes generated as intersections of random halfspaces. The model is again based on a choice of $N$ independent random points $X_1,\ldots,X_N$ in $\mathbb{R}^n$, but now we consider the polytope generated as the intersection of the halfspaces $\{x\in \mathbb{R}^n : \langle X_i,x\rangle\ls a \}$ for each  $i=1,\ldots,N$, for some $a>0$. Following Pivovarov \cite{Piv}, who treated this setting for the cases where $X_1,\ldots,X_N$ are chosen according to the uniform measure on $S^{n-1}$ or the Gaussian measure, we provide two threshold results of the same nature when the points $X_i$ are chosen according to $\nu_\beta$ or $\tilde{\nu}_\beta$. For simplicity, we choose to treat only the case $\sigma=1$ in the beta-prime model, so let $\tilde{\nu}_\beta := \tilde{\nu}_{\beta,1}$ for the rest of the text.%; still, one could derive results of the same nature for some general $\sigma=\sigma(n,\beta)$.
We give the exact statements and, then, provide a couple of preparatory lemmas before we proceed to the proofs.

Like before, let $N>n$ and $X_1,\ldots,X_N \in B_2^n$ be distributed independently according to $\nu_\beta$. Consider the polytope
\[
H_{N,n}^\beta := \{x\in\mathbb{R}^n : \langle X_i,x \rangle \ls 1, \hbox{ for each } i=1,\ldots,N\}.
\]
Note that $H_{N,n}^\beta$ contains $B_2^n$. Actually we have the stronger inclusion $H_{N,n}^\beta\supseteq RB_2^n$ for $R=\min_{i\in [N]}\|X_i\|^{-1}$. Moreover, the bigger $N$ is, the smaller $V_n(H_{N,n}^\beta)$ gets. The next statement gives a threshold for the normalized volume of intersections of $H_{N,n}^\beta$  with balls of radius larger than $1$. It turns out that, as $n\to \infty$, $H_{N,n}^\beta$ tends to capture the whole of the volume of such a ball, if the number of points is at most exponential in the dimension. 

\begin{theorem}\label{thm.H_Nb}
Fix $\epsilon, R\in(0,1)$ and let $-1<\beta=\beta(n)$. Then,
\[
    \lim_{n\to\infty}\frac{\Ex  V_n(H_{N,n}^\beta\cap R^{-1}B_2^n)}{V_n(R^{-1}B_2^n)} =
	\begin{cases} 
	1 & \text{ if } N\ls \exp\left((1-\epsilon)(\beta+\frac{n+1}{2})\log((1-R^2)^{-1})\right) \\ 
	0 & \text{ if } N\gr \exp\left((1+\epsilon)(\beta+\frac{n+1}{2})\log((1-R^2)^{-1})\right).
	\end{cases}
\]
\end{theorem}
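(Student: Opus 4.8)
The plan is to mimic Pivovarov's treatment of the dual model in \cite{Piv}, reducing the expected volume ratio to an explicit one-dimensional integral through Fubini's theorem and Lemma \ref{lem.q=B}. Fix $y\in\R^n$. If $\norm{y}\le 1$ then $y\in B_2^n\subseteq H_{N,n}^\beta$ almost surely. If $\norm{y}>1$, then $y\notin H_{N,n}^\beta$ exactly when some $X_i$ lies in the open halfspace $\{x:\langle x,y\rangle>1\}$, whose bounding hyperplane is at distance $1/\norm{y}\in(0,1)$ from the origin; by Lemma \ref{lem.q=B}(a) this occurs for a given $X_i$ with probability $\cdf(1/\norm{y})$. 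Setting $\cdf(d):=0$ for $d\ge1$ and using independence,
\[
\Pr{y\in H_{N,n}^\beta}=\bigl(1-\cdf(1/\norm{y})\bigr)^N\qquad\text{for every }y\in\R^n.
\]

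Integrating this over $R^{-1}B_2^n$ (Tonelli), passing to polar coordinates and substituting $t=R\norm{y}$ gives the exact identity
\[
\frac{\Ex V_n(H_{N,n}^\beta\cap R^{-1}B_2^n)}{V_n(R^{-1}B_2^n)}
= n\int_0^1\bigl(1-\cdf(R/t)\bigr)^N t^{n-1}\de t
= R^n+n\int_R^1\bigl(1-\cdf(R/t)\bigr)^N t^{n-1}\de t,
\]
the last step because $\cdf(R/t)=0$ for $t\le R$. The ratio lies in $[0,1]$, $R^n\to0$, and the weight $nt^{n-1}\de t$ concentrates near $t=1$; there $\cdf(R/t)$ attains its largest value $\cdf(R)$, since $\cdf$ is non-increasing on $[0,1]$.

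If $N\le\exp\bigl((1-\epsilon)(\beta+\tfrac{n+1}{2})\log\tfrac1{1-R^2}\bigr)$: from $(1-\cdf(R/t))^N\ge(1-\cdf(R))^N\ge1-N\cdf(R)$ the ratio is at least $R^n+(1-N\cdf(R))(1-R^n)$, so it suffices that $N\cdf(R)\to0$, which follows from the upper bound in Lemma \ref{lem:B.bounds}: one has $N\cdf(R)\le\tfrac1{2R\sqrt\pi}N(1-R^2)^{\beta+\frac{n+1}{2}}(\beta+\tfrac n2)^{-1/2}$ and $N(1-R^2)^{\beta+\frac{n+1}{2}}\le\exp\bigl(-\epsilon(\beta+\tfrac{n+1}{2})\log\tfrac1{1-R^2}\bigr)\to0$, because $R\in(0,1)$ is fixed and $\beta+\tfrac{n+1}{2}\ge\tfrac{n-1}{2}\to\infty$. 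If $N\ge\exp\bigl((1+\epsilon)(\beta+\tfrac{n+1}{2})\log\tfrac1{1-R^2}\bigr)$: fix $\delta=\delta(R,\epsilon)\in(0,1-R)$ small enough that $\bigl(1-R^2/(1-\delta)^2\bigr)/(1-R^2)^{1+\epsilon}>1$, which is possible since this quantity tends to $(1-R^2)^{-\epsilon}>1$ as $\delta\to0$. Split the integral at $1-\delta$: the part over $[R,1-\delta]$ is at most $(1-\delta)^n\to0$, while over $[1-\delta,1]$ monotonicity of $\cdf$ together with $R/t\le R/(1-\delta)<1$ bounds the integrand by $(1-\cdf(R/(1-\delta)))^N\le e^{-N\cdf(R/(1-\delta))}$; and the lower bound in Lemma \ref{lem:B.bounds} with $N\ge(1-R^2)^{-(1+\epsilon)(\beta+\frac{n+1}{2})}$ gives
\[
N\cdf\Bigl(\frac{R}{1-\delta}\Bigr)>\frac{1}{2\sqrt\pi\,\sqrt{\beta+\tfrac n2+1}}\left(\frac{1-R^2/(1-\delta)^2}{(1-R^2)^{1+\epsilon}}\right)^{\beta+\frac{n+1}{2}}\to\infty,
\]
so all three contributions vanish and the ratio tends to $0$.

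The only genuinely delicate point is the choice of $\delta$ in the last step: one must localize the radial integral near $t=1$, where the weight $nt^{n-1}$ concentrates, while keeping $R/(1-\delta)$ strictly below $1$ so that the exponential base $\bigl(1-R^2/(1-\delta)^2\bigr)/(1-R^2)^{1+\epsilon}$ stays above $1$. Everything else is routine bookkeeping with Lemma \ref{lem:B.bounds}, in the same spirit as the proof of Theorem \ref{Theorem1}.
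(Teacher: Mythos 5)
Your proof is correct, and the overall strategy is the same as the paper's: express $\Pr{y\in H_{N,n}^\beta}$ via Lemma \ref{lem.Pr=F}(a), split the region $R^{-1}B_2^n$ into a thin outer shell near radius $R^{-1}$ (where the Lebesgue weight concentrates) and the interior, and then play $N$ against $\cdf$ through Lemma \ref{lem:B.bounds}. The differences are presentational rather than conceptual, but they are worth noting. You bypass Lemma \ref{lem.EVn.bounds} by writing the volume ratio as the exact one-dimensional integral $n\int_0^1(1-\cdf(R/t))^N t^{n-1}\de t$, which makes the concentration of mass near $t=1$ transparent and lets you use only the elementary inequalities $1-Nx\le(1-x)^N\le e^{-Nx}$ instead of \eqref{eq.exp.bd.b}. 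Your cutoff radius is $(1-\delta)R^{-1}$ for a constant $\delta(R,\epsilon)$ chosen implicitly so that the ratio $\bigl(1-R^2/(1-\delta)^2\bigr)/(1-R^2)^{1+\epsilon}$ exceeds $1$; the paper instead makes the explicit and slightly slicker choice $t=(1-(1-R^2)^{1+\epsilon/2})^{-1/2}$, which is rigged so that $\log(1-t^{-2})=(1+\epsilon/2)\log(1-R^2)$ and an $\epsilon/2$ margin appears automatically. Both choices are valid since $R$ and $\epsilon$ are fixed. One small point worth making explicit in a final write-up: your lower-bound direction silently uses that $\cdf(R/t)\le\cdf(R)$ for $t\in[R,1]$, i.e.\ that $\cdf$ is non-increasing; and the upper-bound direction needs $\beta+\tfrac{n+1}{2}\to\infty$ to kill the $\sqrt{\beta+\tfrac{n}{2}+1}$ denominator against the exponential, which holds because $\beta>-1$ forces $\beta+\tfrac{n+1}{2}\ge\tfrac{n-1}{2}$. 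You noted the former and the latter is implicit in your argument, so both are in order.
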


For $X_1,\ldots,X_N \in \mathbb{R}^n$ independent and distributed according to $\tilde{\nu}_\beta$, let
\[
\tilde{H}_{N,n}^\beta := \{x\in\mathbb{R}^n : \langle X_i,x \rangle \ls n, \hbox{ for each } i=1,\ldots,N\}.
\]
As in the case of $\tilde{P}_{N,n}^\beta$, we give a threshold result for $\mu(\tilde{H}_{N,n}^\beta)$, where $\mu$ can be taken to be any isotropic log-concave probability measure on $\mathbb{R}^n$.

\begin{theorem}\label{thm.H_Nb'}
Fix $\epsilon\in(0,1)$ and let \(\mu = \mu_n \) denote an arbitrary isotropic log-concave measure on $\mathbb{R}^n$. Let  $\beta = \beta(n)>n/2$ such that $\lim_{n\to \infty}\beta-n/2=\infty$. Then,
\[
    \lim_{n\to\infty}\Ex  \mu(\tilde{H}_{N,n}^\beta) =
	\begin{cases} 
	1 & \text{ if } N\ls \exp\left(\left(\beta-\frac{n}{2}\right)\log((1-\varepsilon)n)\right) \\ 
	0 & \text{ if } N\gr \exp\left(\left(\beta-\frac{n}{2}\right)\log((1+\varepsilon)n)\right).
	\end{cases}
\]
\end{theorem}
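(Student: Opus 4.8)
The plan is to run, in this dual setting, the same pointwise coverage argument used for Theorems \ref{Theorem1} and \ref{thm.beta.prime.sigma}, with the roles of ``inside the polytope'' and ``separated from the polytope'' exchanged. Fix $x\in\R^n\setminus\{0\}$. Then $x\in\tilde{H}_{N,n}^\beta$ exactly when none of the points $X_i$ lies in the halfspace $H_x=\{y\in\R^n:\langle y,x\rangle>n\}$, whose bounding hyperplane is at Euclidean distance $n/\norm{x}$ from the origin. By independence together with Lemma \ref{lem.q=B}(b) applied with $\sigma=1$,
\[
\Pr{x\in\tilde{H}_{N,n}^\beta}=\bigl(1-\tilde{\cdf}(n/\norm{x})\bigr)^{N},
\]
and, since an isotropic log-concave measure is absolutely continuous (so $\mu(\{0\})=0$),
\[
\Ex\mu(\tilde{H}_{N,n}^\beta)=\int_{\R^n}\bigl(1-\tilde{\cdf}(n/\norm{x})\bigr)^{N}\,\mu(\de x).
\]
This is the exact analogue of the identities behind Theorem \ref{thm.H_Nb}, now with the measure $\mu$ in place of a normalized volume.

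Next I would localize the integral to a thin spherical shell using Theorem \ref{thm:concentration}. Fix a small $\epsilon'>0$, to be chosen later in terms of $\epsilon$, and set $A_n=\{x:|\norm{x}-\sqrt n|\le\epsilon'\sqrt n\}$, so that $\mu(A_n)\to1$. On $A_n$ one has $\sqrt n/(1+\epsilon')\le n/\norm{x}\le\sqrt n/(1-\epsilon')$, hence, as $\tilde{\cdf}$ is decreasing, $\tilde{\cdf}(\sqrt n/(1+\epsilon'))\ge\tilde{\cdf}(n/\norm{x})\ge\tilde{\cdf}(\sqrt n/(1-\epsilon'))$. Using $1-a\le e^{-a}$ on $A_n$ and bounding the integrand by $1$ on $A_n^c$ gives $\Ex\mu(\tilde{H}_{N,n}^\beta)\le\mu(A_n^c)+\exp(-N\tilde{\cdf}(\sqrt n/(1-\epsilon')))$, while restricting to $A_n$ and using $(1-a)^N\to1$ whenever $Na\to0$ gives $\Ex\mu(\tilde{H}_{N,n}^\beta)\ge\mu(A_n)\bigl(1-\tilde{\cdf}(\sqrt n/(1+\epsilon'))\bigr)^{N}$. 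So the whole theorem reduces to showing that $N\tilde{\cdf}(\sqrt n/(1-\epsilon'))\to\infty$ in the regime $N\gr\exp((\beta-\tfrac n2)\log((1+\epsilon)n))$, and $N\tilde{\cdf}(\sqrt n/(1+\epsilon'))\to0$ in the regime $N\ls\exp((\beta-\tfrac n2)\log((1-\epsilon)n))$.

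The last step is to estimate $\tilde{\cdf}(c\sqrt n)$ for a constant $c$ near $1$. Since $\sigma=1$ and $\beta-\tfrac n2\to\infty$, the bounds \eqref{eq.tilde(F).sigma=1.bounds} give, for all large $n$ (so that $c\sqrt n>1$ and $\beta>\tfrac{n+1}2$), that $\tilde{\cdf}(c\sqrt n)\asymp(1+c^2n)^{-(\beta-n/2)}/\sqrt{\beta-\tfrac n2}$, whence
\[
\log\bigl(N\tilde{\cdf}(c\sqrt n)\bigr)=\log N-\Bigl(\beta-\tfrac n2\Bigr)\log(1+c^2n)-\tfrac12\log\Bigl(\beta-\tfrac n2\Bigr)+O(1).
\]
Substituting $N\asymp\exp\bigl((\beta-\tfrac n2)\log((1\pm\epsilon)n)\bigr)$, the leading term equals $(\beta-\tfrac n2)\log\frac{(1\pm\epsilon)n}{1+c^2n}$, and $\frac{(1\pm\epsilon)n}{1+c^2n}\to\frac{1\pm\epsilon}{c^2}$. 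In the first regime I would take $c=1/(1-\epsilon')$ with $\epsilon'$ small enough that $(1+\epsilon)(1-\epsilon')^2>1$; in the second, $c=1/(1+\epsilon')$ with $\epsilon'$ small enough that $(1-\epsilon)(1+\epsilon')^2<1$ — both possible precisely because $\epsilon\in(0,1)$ is a fixed constant. Then $\log\frac{(1\pm\epsilon)n}{1+c^2n}$ tends to a nonzero constant of the required sign, so, since $\beta-\tfrac n2\to\infty$, $\log(N\tilde{\cdf}(c\sqrt n))$ diverges to $\pm\infty$ at a rate linear in $\beta-\tfrac n2$, which beats the $-\tfrac12\log(\beta-\tfrac n2)$ correction; combined with the previous paragraph this closes both cases.

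The one point that actually requires attention is the coordination of the two small parameters: the shell width $\epsilon'$ must be fixed \emph{after} $\epsilon$ and small enough that $(1\pm\epsilon)(1\mp\epsilon')^{\pm2}$ stays on the correct side of $1$. This is harmless since $\epsilon$ is a genuine positive constant, and it is also why, unlike in \Cref{thm:beta}, one should not expect to be able to let $\epsilon=\epsilon(n)\to0$ here. The remaining details (ensuring $c\sqrt n>1$, which holds for all large $n$ as $c$ is bounded away from $0$; and the negligibility of $x=0$) are routine.
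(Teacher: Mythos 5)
Your proposal is correct and follows essentially the same route as the paper: the pointwise identity $\Pr{x\in\tilde H_{N,n}^\beta}=(1-\tilde{\cdf}(n/\norm x))^N$ (this is Lemma \ref{lem.Pr=F}(b), which you derive in-line from Lemma \ref{lem.q=B}(b) and independence), thin-shell concentration to localize to an annulus of radius $\approx\sqrt n$, and the two-sided bounds \eqref{eq.tilde(F).sigma=1.bounds} on $\tilde{\cdf}$ to force $N\tilde{\cdf}$ to $0$ or $\infty$. The only cosmetic differences are that the paper routes the estimate through the integrated bound of Lemma \ref{lem.EVn.bounds}(b) and the explicit inequality \eqref{eq.exp.bd.b'} in place of your bare $(1-a)^N\to1$ when $Na\to0$, and it hard-codes the shell radii as $n/\sqrt{(1\mp\epsilon/2)n-1}$ rather than introducing a second parameter $\epsilon'$; your treatment of the coordination between $\epsilon$ and $\epsilon'$ is the right point to be careful about and you handle it correctly.
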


Towards the proofs, we will once more relate the probability contents of $H_{N,n}^\beta$ and $\tilde{H}_{N,n}^\beta$ to the distribution functions $\cdf$ and $\tilde{\cdf}$ of $\nu_\beta$ and $\tilde{\nu}_\beta$, respectively.

\begin{lemma}\label{lem.Pr=F}
\begin{itemize}
\item[\rm(a)] Let $x\in \mathbb{R}^n\setminus B_2^n$. Then, $\mathbb{P}(x \in H_{N,n}^\beta)= (1-\cdf(\norm{x}^{-1}))^N$.
\item[\rm(b)] Let $x\in \mathbb{R}^n\setminus \{0\}$. Then, $\mathbb{P}(x \in \tilde{H}_{N,n}^\beta)= (1-\tilde{\cdf}(n/\norm{x}))^N$.
\end{itemize}
\end{lemma}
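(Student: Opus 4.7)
My plan is to reduce the probability to an $N$-fold product by independence and then identify each factor via \Cref{lem.q=B}. Concretely, for part (a), since $X_1,\ldots,X_N$ are i.i.d., I would start from
\[
\mathbb{P}(x\in H_{N,n}^\beta) = \mathbb{P}\bigl(\langle X_i, x\rangle \leq 1 \text{ for all } i\in[N]\bigr) = \mathbb{P}(\langle X_1,x\rangle \leq 1)^N,
\]
so everything comes down to evaluating $\mathbb{P}(\langle X_1,x\rangle > 1)$. This is the $\nu_\beta$-measure of the halfspace $H_x := \{y\in\R^n : \langle y,x\rangle > 1\}$, whose bounding hyperplane $\{y:\langle y,x/\|x\|\rangle = 1/\|x\|\}$ lies at Euclidean distance $\|x\|^{-1}$ from the origin. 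The assumption $x\notin B_2^n$ guarantees $\|x\|^{-1}\in(0,1)$, which is precisely the range on which $\cdf$ was defined. Applying \Cref{lem.q=B}(a) directly gives $\mathbb{P}(X_1\in H_x) = \cdf(\|x\|^{-1})$, and the claim follows.

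For part (b), the argument is an almost verbatim repetition, just with the halfspace $\{y : \langle y,x\rangle > n\}$ in place of the above, which now lies at distance $n/\|x\|$ from the origin. The hypothesis $x\neq 0$ makes this distance well-defined and nonnegative, which is exactly the range where \Cref{lem.q=B}(b) applies, yielding $\mathbb{P}(\langle X_1,x\rangle > n) = \tilde{\cdf}(n/\|x\|)$ and hence the stated identity.

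There is no genuine obstacle here: the only subtlety is bookkeeping of the distance from the origin to the separating hyperplane (the reciprocal of $\|x\|$ in (a), scaled by the parameter $n$ in (b)) and matching it to the admissible range of $\cdf$ or $\tilde{\cdf}$. Everything else is the independence-product step together with the previously proved \Cref{lem.q=B}.
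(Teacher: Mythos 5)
Your proof is correct and follows essentially the same route as the paper: independence reduces $\mathbb{P}(x\in H_{N,n}^\beta)$ to $\mathbb{P}(\langle X_1,x\rangle\le 1)^N$, and the single-point probability is identified as the measure of a halfspace at distance $\norm{x}^{-1}$ (resp.\ $n/\norm{x}$) from the origin. The only cosmetic difference is that you invoke \Cref{lem.q=B}, while the paper re-derives the same halfspace probability inline via rotational invariance of $\nu_\beta$; the content is identical (and the open-versus-closed halfspace distinction is harmless since the distributions have densities).
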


\begin{proof}
Again, we sketch the proof only for the beta case. Using independence and the rotational invariance of $\nu_\beta$, we can write
\begin{align*}
\mathbb{P}(x\in H_{N,n}^\beta) &= \mathbb{P}(\langle X_i,x \rangle\ls 1, \hbox{ for every } i=1,\ldots,N)\\
                               &= \mathbb{P}(\langle X_1,x \rangle\ls 1)^N = \mathbb{P}\Bigl(\langle X_1, \frac{x}{\norm{x}} \rangle\ls \norm{x}^{-1}\Bigr)^N\\
                               &= \mathbb{P}\left(\langle X_1, e_1 \rangle\ls \norm{x}^{-1}\right)^N = (1-\cdf(\norm{x}^{-1}))^N.
\end{align*}
The same argument proves (b).
\end{proof}

\begin{lemma}\label{lem.EVn.bounds}
\begin{itemize}
\item[\rm(a)] Let $1\ls t < s$ and $B=sB_2^n\setminus tB_2^n$. Then,
\[
V_n(B)(1-\cdf(s^{-1}))^N \ls \Ex V_n(H_{N,n}^\beta\cap B) \ls V_n(B)(1-\cdf(t^{-1}))^N.
\]
\item[\rm(b)] Let $\mu$ be an isotropic log-concave measure on $\mathbb{R}^n$, $0<t<s$ and $B=sB_2^n\setminus tB_2^n$. Then,
\[
\mu(B)(1-\tilde{F}(n/s))^N \ls \Ex \mu(\tilde{H}_{N,n}^\beta\cap B)\ls \mu(B)(1-\tilde{\cdf}(n/t))^N. 
\]
\end{itemize}
\end{lemma}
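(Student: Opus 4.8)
The plan is to prove Lemma~\ref{lem.EVn.bounds} by the same computation as in Lemma~\ref{lemma.ineq}, integrating the pointwise inclusion probabilities from Lemma~\ref{lem.Pr=F} over the annulus $B$, and then using the monotonicity of $\cdf$ (resp. $\tilde\cdf$) to replace the radius-dependent bound by its extreme value on $B$.

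For part~(a): since $H_{N,n}^\beta\cap B$ is measurable and bounded, Fubini's theorem gives
\[
\Ex V_n(H_{N,n}^\beta\cap B) = \Ex\int_B \indic{H_{N,n}^\beta}(x)\de x = \int_B \Pr{x\in H_{N,n}^\beta}\de x = \int_B \bigl(1-\cdf(\norm{x}^{-1})\bigr)^N\de x,
\]
where the last equality is Lemma~\ref{lem.Pr=F}(a), valid because every $x\in B$ satisfies $\norm{x}\ge t\ge 1$, so $x\notin B_2^n$ unless $\norm{x}=1$, and the boundary case contributes nothing to the integral. Now for $x\in B$ we have $t\le\norm{x}\le s$, hence $s^{-1}\le\norm{x}^{-1}\le t^{-1}$. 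Since $\cdf$ is strictly decreasing on $(0,1)$ (its integrand $f_\beta$ is positive), the map $d\mapsto 1-\cdf(d)$ is increasing, so
\[
\bigl(1-\cdf(s^{-1})\bigr)^N \le \bigl(1-\cdf(\norm{x}^{-1})\bigr)^N \le \bigl(1-\cdf(t^{-1})\bigr)^N
\]
for every $x\in B$. Integrating over $B$ yields the claimed two-sided bound. Part~(b) is identical word-for-word, replacing $\cdf$ by $\tilde\cdf$, using Lemma~\ref{lem.Pr=F}(b) (which requires only $x\ne 0$, automatic on $B$ since $t>0$), observing that for $x\in B$ one has $n/s\le n/\norm{x}\le n/t$, and using that $\tilde\cdf$ is strictly decreasing on $(0,\infty)$; the volume $V_n(B)$ is simply replaced by the measure $\mu(B)$.

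There is essentially no obstacle here: the lemma is a routine packaging step whose only subtlety is getting the direction of the monotonicity right (larger distance $d$ means smaller $\cdf(d)$ means larger $1-\cdf(d)$, and correspondingly the \emph{outer} radius $s$ of the annulus, i.e.\ the smallest value $s^{-1}$ of $\norm{x}^{-1}$, gives the \emph{lower} bound). One should also note that the hypothesis $t\ge 1$ in (a) is exactly what makes Lemma~\ref{lem.Pr=F}(a) applicable on all of $B$, and that no union bound or Markov inequality is needed since both inequalities come directly from the pointwise sandwich under the integral. The real work lies in the subsequent theorems (\ref{thm.H_Nb} and \ref{thm.H_Nb'}), where one feeds the asymptotics of $\cdf$, $\tilde\cdf$ from Lemmas~\ref{lem:B.bounds}, \ref{lem.tail.Ftilda} into these bounds with a suitable choice of inner and outer radii.
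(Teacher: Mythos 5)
Your proof is correct and follows the same route as the paper's: Fubini to convert the expectation into an integral of the pointwise inclusion probability from Lemma~\ref{lem.Pr=F}, then monotonicity of $\cdf$ (resp.\ $\tilde\cdf$) in the radius to sandwich the integrand. The paper's own proof is a one-line version of exactly this; you have just spelled out the monotonicity direction and the applicability of Lemma~\ref{lem.Pr=F} explicitly, which the paper leaves implicit.
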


\begin{proof}
Note that, e.g., for (a),
\[
\Ex V_n(H_{N,n}^\beta\cap B) = \Ex  \int_B \mathbbm{1}_{\{x\in H_{N,n}^\beta\}}(x)\,\de x = \int_B \mathbb{P}(x\in H_{N,n}^\beta) \de x,
\]
and the wanted bounds follow from Lemma \ref{lem.Pr=F} (a), and the fact that $x\in B$ is equivalent to $\norm{x}\in(t,s)$.
\end{proof}

\begin{remark}
Using the inequalities
\[
x-1-(x-1)^2 \ls \log x\ls x-1,
\]
that hold for every  $x\in [1/2,1]$, we will apply in the proofs below the bounds
\begin{equation}\label{eq.exp.bd.b}
\exp(-N\cdf(a)-N\cdf(a)^2) \ls (1-\cdf(a))^N \ls \exp(-N\cdf(a))
\end{equation}
for any $a\in (0,1)$, and
\begin{equation}\label{eq.exp.bd.b'}
\exp(-N\tilde{\cdf}(a)-N\tilde{\cdf}(a)^2) \ls (1-\tilde{\cdf}(a))^N \ls \exp(-N\tilde{\cdf}(a))
\end{equation}
for any $a>0$.
\end{remark}

\subsection{Proof of Theorem \ref{thm.H_Nb}}

Let $0<R<1$, $\epsilon\in(0,1)$, and set $t:=(1-(1-R^2)^{1+\frac{\varepsilon}{2}})^{-1/2}$. Hence, we have
\[
\log\sqrt{1-t^{-2}} = \left(1+\frac{\varepsilon}{2} \right)\log\sqrt{1-R^2}.
\]
Now, take $s=R^{-1}$. Note that $1<t<s$, and set $B=sB_2^n\setminus tB_2^n$. Then,
\[
\frac{\Ex V_n(H_{N,n}^\beta\cap sB_2^n)}{V_n(sB_2^n)} = \frac{\Ex V_n(H_{N,n}^\beta\cap tB_2^n)}{V_n(sB_2^n)} + \frac{\Ex V_n(H_{N,n}^\beta\cap B)}{V_n(sB_2^n)}.
\]
In particular,
\begin{equation}\label{eq.bproof.bounds}
\frac{\Ex V_n(H_{N,n}^\beta\cap B)}{V_n(sB_2^n)} \ls \frac{\Ex V_n(H_{N,n}^\beta\cap sB_2^n)}{V_n(sB_2^n)} \ls \Bigl(\frac{t}{s}\Bigr)^n + \frac{\Ex V_n(H_{N,n}^\beta\cap B)}{V_n(sB_2^n)}.
\end{equation}
We will use \eqref{eq.bproof.bounds} to bound the investigated ratio from above and below by something that tends to $1$ or zero respectively, depending on the choice of $N$.

For the lower bound, we let
\[
N\le \exp\Bigl((1-\varepsilon)\Bigl(\beta+\frac{n+1}{2} \Bigr)\log\bigl((1-R^2)^{-1} \bigr) \Bigr).
\]
Then, using successively the lower bounds in \eqref{eq.bproof.bounds}, Lemma \ref{lem.EVn.bounds} (a) and \eqref{eq.exp.bd.b}, we write
\begin{align*}
\frac{\Ex V_n(H_{N,n}^\beta\cap sB_2^n)}{V_n(sB_2^n)} &\gr \frac{\Ex V_n(H_{N,n}^\beta\cap B)}{V_n(sB_2^n)} = \frac{V_n(B)}{V_n(sB_2^n)}\frac{\Ex V_n(H_{N,n}^\beta\cap B)}{V_n(B)} \\
                                                            &\gr (1-(t/s)^n)(1-\cdf(s^{-1}))^N \\
                                                            &\gr (1-(t/s)^n)\exp(-N\cdf(R)-N\cdf(R)^2).
\end{align*}
It thus suffices to prove that $N\cdf(R)\to 0$ and $N\cdf(R)^2\to 0$ to get that $\frac{\Ex V_n(H_{N,n}^\beta\cap sB_2^n)}{V_n(sB_2^n)} \to 1$. Recall that, by Lemma \ref{lem:B.bounds}, we have
\[
\cdf(R) \ls \frac{1}{2R\sqrt{\pi}}\frac{1}{\sqrt{\beta+\frac{n}{2}}}\exp\Bigl(-\Bigl(\beta+\frac{n+1}{2}\Bigr)\log\bigl((1-R^2)^{-1}\bigr)\Bigr),
\]
so that
\[
N\cdf(R) \ls \frac{1}{2R\sqrt{\pi}}\frac{1}{\sqrt{\beta+\frac{n}{2}}}\exp\Bigl(-\epsilon\Bigl(\beta+\frac{n+1}{2}\Bigr)\log\bigl((1-R^2)^{-1}\bigr)\Bigr),
\]
which establishes that $NF(R)\to 0$ as $n\to \infty$. It is straightforward to see that the same holds for $NF(R)^2$.

For the upper bound, we choose
\[
N\ge  \exp\Bigl( (1+\epsilon) \Bigl(\beta+\frac{n+1}{2}\Bigr)\log\bigl((1-R^2)^{-1}\bigr) \Bigr).
\]
Similarly as before, we use the upper bounds in \eqref{eq.bproof.bounds}, Lemma \ref{lem.EVn.bounds} (a) and \eqref{eq.exp.bd.b} to see that
\[
\frac{\Ex V_n(H_{N,n}^\beta\cap sB_2^n)}{V_n(sB_2^n)} \ls \Bigl(\frac{t}{s}\Bigr)^n + \exp(-NF(t^{-1})).
\]
Note that, by the definition of $N$, $t$, and the lower bound of Lemma \ref{lem:B.bounds}, we get
\[
N\cdf(t^{-1}) \gr \frac{1}{2\sqrt{\pi}}\frac{1}{\sqrt{\beta+\frac{n}{2}+1}} \exp\Bigl(\frac{\varepsilon}{2}\Bigl(\beta+\frac{n+1}{2}\Bigr)\log\bigl((1-R^2)^{-1}\bigr)\Bigr),
\]
that yields $\lim_{n\to \infty}N\cdf(t^{-1}) = +\infty$. Since $(t/s)^n\to 0$ as $n\to\infty$, we then have that $\lim_{n\to \infty}\frac{\Ex  V_n(H_{N,n}^\beta\cap sB_2^n)}{V_n(sB_2^n)} = 0$, proving the claim.

\subsection{Proof of Theorem \ref{thm.H_Nb'}}

Let $\epsilon\in(0,1)$ and set $s_n := n/\sqrt{(1-\epsilon/2)n-1}$, $t_n :=n/ \sqrt{(1+\varepsilon/2)n-1}$. Note that \(t_n<s_n\) and both are of order \(\sqrt{n}\). Then, set $B=s_nB_2^n\setminus t_nB_2^n$. Let $\mu$ be an isotropic log-concave probability measure on $\R^n$, and choose
\[
N\gr \exp\left(\left(\beta-\frac{n}{2}\right)\log((1+\varepsilon)n)\right).
\]
Since
\[
\Ex \mu(\tilde{H}_{N,n}^\beta) \ls \mu(t_nB_2^n) + \Ex \mu(\tilde{H}_{N,n}^\beta\cap B) + \mu(\R ^n\setminus s_nB_2^n),
\]
and by Theorem \ref{thm:concentration} the first and last term tend to zero with $n$, we need only to prove that the same happens to the second term.

By the upper bounds in Lemma \ref{lem.EVn.bounds} (b) and \eqref{eq.exp.bd.b'}, we have that
\[
\Ex \mu(\tilde{H}_{N,n}^\beta\cap B) \ls \mu(B)(1-\tilde{\cdf}(n/t_n))^N \ls \mu(B)\exp(-N\tilde{\cdf}(n/t_n)).
\]
By \eqref{eq.tilde(F).sigma=1.bounds}, we have that
\[
N\tilde{\cdf}(n/r_n) \gr \frac{1}{2\sqrt{\pi}}\frac{1}{\sqrt{\beta-\frac{n-1}{2}}} \left(\frac{1+\varepsilon}{1+\frac{\varepsilon}{2}}\right)^{\beta-\frac{n}{2}},
\]
and since the last expression tends to infinity with $n$, we get $\lim_{n\to \infty}\Ex \mu(\tilde{H}_{N,n}^\beta\cap B) = 0$, proving the second statement of the theorem.

On the other hand, let
\[
N\ls \exp\left(\left(\beta-\frac{n}{2}\right)\log((1-\varepsilon)n) \right).
\]
Note that, by \eqref{eq.tilde(F).sigma=1.bounds}, again,
\[
N\tilde{\cdf}(n/s_n) \ls \frac{1}{\sqrt{2\pi}}\frac{1}{\sqrt{\beta-\frac{n+1}{2}}} \left(\frac{1-\varepsilon}{1-\varepsilon/2} \right)^{\beta-\frac{n}{2}},
\]
which tends to zero with $n$. The same holds for $N\tilde{\cdf}(n/s_n)^2$. Using the lower bounds in Lemma \ref{lem.EVn.bounds} (b) and \eqref{eq.exp.bd.b'} we see that
\begin{align*}
\Ex \mu(\tilde{H}_{N,n}^\beta) &\gr\Ex \mu(\tilde{H}_{N,n}^\beta\cap B) \gr \mu(B)(1-\tilde{\cdf}(n/s_n))^N \\
                                     &\gr \mu(B)\exp\bigl(-N\tilde{\cdf}(n/s_n) -N\tilde{\cdf}(n/s_n)^2 \bigr).
\end{align*}
Thus, $\lim_{n\to\infty}\Ex \mu(\tilde{H}_{N,n}^\beta) = 1$, which completes the proof.

\section*{Acknowledgement}
The authors would like to express their gratitude towards Christoph Th\"ale for initiating this collaboration.
\addcontentsline{toc}{section}{References}
	\footnotesize

	\vspace{1cm}

    \footnotesize

    \noindent\textsc{Gilles Bonnet:} Faculty of Mathematics, Ruhr University Bochum\\
    \textit{E-mail}: \texttt{gilles.bonnet@rub.de}
    
    \bigskip
    
    \noindent\textsc{Giorgos Chasapis:} Department of Mathematics, University of Athens\\
    \textit{E-mail}: \texttt{gchasapis@math.uoa.gr}

    \bigskip

    \noindent\textsc{Julian Grote:} Faculty of Mathematics, Ruhr University Bochum\\
    \textit{E-mail}: \texttt{julian.grote@rub.de}

    \bigskip
    
    \noindent\textsc{Daniel Temesvari:}  Faculty of Mathematics, Ruhr University Bochum\\
    \textit{E-mail}: \texttt{daniel.temesvari@rub.de}

    \bigskip
    
    \noindent\textsc{Nicola Turchi:}  Faculty of Mathematics, Ruhr University Bochum\\
    \textit{E-mail}: \texttt{nicola.turchi@rub.de}
	

\begin{thebibliography}{100}
		
		\bibitem{Af}
		F. Affentranger. \textit{The convex hull of random points with spherically symmetric distributions}, Rend. Sem. Mat. Univ. Politec. Torino 49 (1991), 359–383.
		
        \bibitem{ABP}
        M. Anttila, K. M. Ball and E. Perissinaki. \textit{The central limit problem for convex bodies}, Trans. Amer. Math. Soc. 355 (2003), 4723-4735.		
		
		\bibitem{Artin} 
		E. Artin. \textit{Einf\"uhrung in die Theorie der Gammafunktion}, Teubner, Leipzig, (1931); English translation: \textit{The Gamma Function}, Holt, Rinehart and Winston, New York, (1964).

		\bibitem{BP}
		I. B\'{a}r\'{a}ny and A. P\'{o}r. 
		\textit{On $0-1$ polytopes with many facets}, Adv. Math. 161 (2001), 209–228.
		
		\bibitem{BGTTTW}
		G. Bonnet, J. Grote, D. Temesvari, C. Th\"{a}le, N. Turchi and F. Wespi.
		\textit{Monotonicity of facet numbers of random convex hulls}, Math. Anal. Appl. 455 (2017), 1351–1364.
		
        \bibitem{BGVV}
        S. Brazitikos, A. Giannopoulos, P. Valettas, and B.-H. Vritsiou.
        \textit{Geometry of Isotropic Convex Bodies}, volume 196 of Mathematical Surveys and Monographs, American Mathematical Society, Providence, RI, 2014.		
		
		\bibitem{DFMcD}
		M. E. Dyer, Z. Füredi and C. McDiarmid. 
		\textit{Volumes spanned by random points in the hypercube}, Random Structures Algorithms 3 (1992), 91-106.
		
        \bibitem{Hug}
        D. Hug. \textit{Random polytopes}, Stochastic Geometry, Spatial Statistics and Random Fields. Asymptotic
Methods, Lecture Notes in Mathematics 2068 (ed. E. Spodarev) (2013), 205–238.		
		
		\bibitem{GG} 
		D. Gatzouras and A. Giannopoulos.
		\textit{Threshold for the volume spanned by random points with independent coordinates}, Israel Journal of Mathematics 169 (2009), 125-153.		
		
		%\bibitem{Gautschi}
	%	W. Gautschi. \textit{Some Elementary Inequalities Relating to the Gamma and Incomplete Gamma Function.}J. Math. and Phys. 38 (1959), 77–81.
				
		\bibitem{GKT}
		J. Grote, Z. Kabluchko and C. Th\"{a}le.
		\textit{Limit theorems for random simplices in high dimensions}, \href{https://arXiv.org/pdf/1708.00471.pdf}{arXiv:1708.00471}.
		
		 %\bibitem{GueMil}
         %O. Gu\'{e}don and E. Milman, 
         %\textit{Interpolating thin shell and sharp large-deviation estimates for isotropic log-concave measures}, Geom. Funct. Anal. \textbf{21} (2011), 1043--1068.
		
		\bibitem{KTT}
		Z. Kabluchko, D. Temesvari and C. Th\"{a}le.
		\textit{Expected intrinsic volumes and facet numbers of random beta-polytopes}, \href{https://arXiv.org/pdf/1707.02253.pdf}{arXiv:1707.02253}.
		
		\bibitem{KMTT}
		Z. Kabluchko, A. Marynych, D. Temesvari and C. Th\"{a}le.
		\textit{Cones generated by random points on half-spheres and convex hulls of Poisson point processes}, \href{https://arxiv.org/pdf/1801.08008.pdf}{ arXiv:1801.08008}
		
        \bibitem{Kla}
        B. Klartag. \textit{A central limit theorem for convex sets}, Invent. Math. 168 (2007), 91-131.		
		
		%\bibitem{LV}
		%Y. Lee and S. Vempala,
        %\textit{Eldan’s Stochastic Localization and the KLS Hyperplane Conjecture: An Improved Lower Bound for Expansion}, 2016, 	\href{https://arxiv.org/pdf/1612.01507v2.pdf}{arXiv:1612.01507}.
		
		\bibitem{Piv}
		P. Pivovarov.
		\textit{Volume thresholds for Gaussian and spherical random polytopes and their duals}, Studia Math. 183 (2007), 15--34.
		
		\bibitem{PivPhd}
		P. Pivovarov.
		\textit{Volume distribution \& the geometry of high-dimensional random polytopes}, Phd dissertation, University of Alberta, 2010.
		
		\bibitem{Reitzner}
        M. Reitzner. 
        \textit{Random polytopes}, in New perspectives in stochastic geometry, Oxford Univ. Press, Oxford, 2010.	
		
		\bibitem{SchneiderBuch}
        R. Schneider. 
        \textit{Convex Bodies: The Brunn-Minkowski Theory, Second expanded edition}, Encyclopedia of Mathematics and Its Applications 151, Cambridge University Press, Cambridge, 2014.
        
         \bibitem{Wendel}
         J. G. Wendel. \textit{Note on the gamma function}, Amer. Math. Monthly 55 (1948), 563-564.
        
    \bibitem{Wong}
        R. Wong.
        \textit{Asymptotic approximations of integrals}, Computer Science and Scientific Computing, Academic Press, Inc., Boston, MA, 1989.
		
		%\bibitem{Wendel}
		%J.G. Wendel,
		%\textit{A problem in geometric probability}, Math. Scand., \textbf{11}, 109-111 (1962)
	\end{thebibliography}
\end{document}